\numberwithin{equation}{section}
\theoremstyle{plain}
\newcommand{\average}[2][0]{{%
		\mspace{#1mu}%
		\overline{\mspace{-#1mu}\average@check#2\relax}%
}}
\newcommand\average@check[1]{%
	#1\@ifnextchar_{\average@sub}{}%
}
\newcommand{\average@sub}[2]{
	_{#2}\mspace{-2mu}\aftergroup\average@compensate
}
\newcommand{\average@compensate}{\mspace{2mu}}
\def\ip#1{\langle #1 \rangle }
\def\hf{\widehat{f}}
\def\hlambda{\widehat{\lambda}}
\def\hT{\widehat{T}}
\def\hV{\widehat{V}}
\def\tf{\tilde{f}}
\def\tl{\tilde{l}}
\def\tg{\tilde{g}}
\def\tT{\widetilde{T}}
\def\tK{\widetilde{K}}
\def\tZ{\widetilde{Z}}
\def\tV{\widetilde{V}}
\def\pto{\overset{p}{\to}}
\def\reals{\mathbb{R}}
\newcommand{\floor}[1]{\lfloor #1 \rfloor}
\newtheorem*{ntheorem}{General Assumptions}
\newtheorem*{result1}{Result 1}
\newtheorem*{result2}{Result 2}
\begin{document}

\begin{frontmatter}
\title{On uniform consistency of spectral embeddings}
\runtitle{Uniform consistency of spectral embeddings}

\begin{aug}
\author{\fnms{Ruofei} \snm{Zhao}\ead[label=e1]{rfzhao@umich.edu}},
\author{\fnms{Songkai} \snm{Xue}\ead[label=e2]{sxue@umich.edu}}
\and
\author{\fnms{Yuekai} \snm{Sun}\ead[label=e3]{yuekai@umich.edu}}

\address{University of Michigan \\
1085 S University,
Ann Arbor, MI 48109 \\
\printead{e1,e2,e3}}

\runauthor{R. Zhao et al.}

\end{aug}

\begin{abstract}
In this paper, we study the convergence of the spectral embeddings obtained from the leading eigenvectors of certain similarity matrices to their population counterparts. We opt to study this convergence in a uniform (instead of average) sense and highlight the benefits of this choice. Using the Newton-Kantorovich Theorem and other tools from functional analysis, we first establish a general perturbation result for orthonormal bases of invariant subspaces. We then apply this general result to normalized spectral clustering. By tapping into the rich literature of Sobolev spaces and exploiting some concentration results in Hilbert spaces, we are able to prove a finite sample error bound on the uniform consistency error of the spectral embeddings in normalized spectral clustering.
\end{abstract}

\begin{keyword}[class=MSC]
\kwd[Primary ]{62H30}
\kwd{47A55}
\end{keyword}

\begin{keyword}
	\kwd{Spectral embedding}
	\kwd{normalized spectral clustering}
	\kwd{uniform consistency}
	\kwd{the Newton-Kantorovich Theorem}
	\kwd{Sobolev spaces}
	\kwd{functional analysis}
	\kwd{concentration in Hilbert spaces}
\end{keyword}

\tableofcontents
\end{frontmatter}

\section{Introduction}

Spectral methods are a staple of modern statistics. For statistical learning tasks such as clustering or classification, one can featurize the data with spectral methods then perform the task on the features. In the past twenty years, spectral methods have seen wide applications in image segmentation \cite{shi2000imagesegmentation}, novelty detection \cite{hoffman2007novelty}, community detection \cite{Donetti2004commudetection}, bioinformatics \cite{higgs2006microarray}, and its effectiveness is partly credited to its ability to reveal the latent low-dimensional structure in the data. 

Spectral embedding gets its name from the fact that the embeddings are constructed from the spectral decomposition of a positive-definite matrix. For example, in normalized spectral clustering \cite{ng2001Spectral}, the {\it normalized Laplacian embedding} $\Phi_n:\{x_i\}_{i=1}^n\to\bbR^K$ is given by 
\begin{equation}
\Phi_n(x_i)^T = e_i^TV,\quad i\in[n],
\label{eq:normalizedLaplacianEmbedding}
\end{equation}
where $\{x_i\}_{i=1}^n$ are the observations, $e_i \in \bbR^n$'s are all zeros but one on the $i$-th entry, $K$ is the desired dimension of the embedding, and the columns of $V\in\bbR^{n\times K}$ are the leading eigenvectors of the normalized Laplacian matrix. As described, spectral embeddings are only defined on points in the training data, but it is possible to evaluate them on points that are not in the training data through out-of-sample extensions \cite{Bengio2003, vonluxburg2008Consistency}. Some other examples of spectral methods are Isomap \cite{tenenbaum2000Global}, Laplacian \cite{belkin2003Laplacian} and Hessian eigenmaps \cite{donoho2003Hessian}, and diffusion maps \cite{coifman2005Geometric}. 

Since downstream procedures take the embeddings as input, it is imperative that the embeddings have certain consistency properties to ensure the quality of the ultimate output. Specifically, we ask
\begin{itemize}
	\item In the large sample limit, do the embedded representations of the data ``converge'' to certain population level representations?
	\item If the embedded representations do converge, in what sense do they converge?
\end{itemize}

While there are many results on the convergence of eigenvalues and spectral projections, there are only a few results that directly address the convergence of the embedded representation in a general setting. The only exception is \citet{vonluxburg2008Consistency}. This is a gap in the literature because it is the embedded representation, not the spectral projections or the eigenvalues, that are the inputs to downstream application. In this paper, we address the two questions and provide direct answers --- we show the sample level embeddings converge uniformly to its population counterpart up to a unitary transformation. We improve the result of \citet{vonluxburg2008Consistency} by considering multidimensional embeddings and allowing for non-simple eigenvalues.

For a concrete application of our result, let us return to spectral clustering. The population counterpart of the normalized Laplacian embedding is given by
\[
\Psi(x)^T = \begin{bmatrix} f_1(x) & \dots & f_K(x) \end{bmatrix},
\]
where $f_1,\dots,f_K$ are the leading eigenfunctions of the normalized Laplacian operator \cite{vonluxburg2008Consistency,schiebinger2015geometry}. As is shown in \citet{vonluxburg2008Consistency}, the normalized Laplacian matrix has an operator counterpart  that we shall refer to as the empirical normalized Laplacian operator. Let $\hat{f}_{n,1},\dots,\hat{f}_{n,K}$ be the leading eigenfunctions of this operator, and define the embedding
\begin{equation}
\Psi_n(x)^T = \begin{bmatrix} \hat{f}_{n,1}(x) & \dots & \hat{f}_{n,K}(x) \end{bmatrix}.
\label{eq:def-psin}
\end{equation}
The embedding $\Psi_n$ coincides with $\Phi_n$ on the sample points, i.e.  $\Psi_n(x_i)=\Phi_n(x_i)$ for all $\{x_i\}_{i=1}^n$.  We shall show that the sample level embedding converges {\it uniformly} to its population counterpart:
\begin{equation}
\sup\{d(\Psi_n(x),\Psi(x)):x\in\cX\} \pto 0,
\label{eq:uniform_conv_def}
\end{equation}
where $d$ is some metric on $\bbR^K$. This implies $\Phi_n$ converges uniformly to the restriction of $\Psi$ to the sample points.

\subsection{Main results}
In this section, we state our results in an informal manner. These results are made precise and proved in subsequent sections.

Our first main result concerns the effect of perturbation on the invariant subspace of an operator. It serves as a general recipe for establishing uniform consistency type results. Although in statistics and machine learning, we mainly work with real-valued functions, our main spectral perturbation result is stated for complex-valued functions. This choice is technically convenient because the complex numbers are algebraically closed while the real numbers are not. In most applications of the result, the complex-valued functions only take real values.

Suppose $\cH$ is a complex Hilbert space whose elements are bounded complex-valued continuous functions over a domain $\cX$. Let $T,\tT$ be two operators from $\cH$ to $\cH$ that are close in Hilbert-Schmidt norm. Let $\{f_i\}_{i=1}^K$ be the top $K$ eigenfunctions of $T$ and $\{\tf_i\}_{i=1}^K$ be those of $\tT$. As long as $\{f_i\}_{i=1}^K$ and $\{\tf_i\}_{i=1}^K$ are appropriately normalized, we expect $\{f_i\}_{i=1}^K$ to be close to $\{\tf_i\}_{i=1}^K$ up to some unitary transformation. This is indeed the case and is characterized as follows by our first result.

\begin{result1}[General recipe for uniform consistency]
	Define $V_1: \bbC^K \to \cH$ as $ V_1 \alpha = \sum_{i=1}^{K} \alpha_i f_i$ and $\tV_1: \bbC^K \to \cH$ as $ \tV_1 \alpha = \sum_{i=1}^{K} \alpha_i \tf_i$. 
	There are constants $C_1,C_2 > 0$ that only depend on $T$ such that as long as $\|\tT-T\|_{HS} \leq C_1$, we have
	\begin{equation}
	\inf \{\|V_1 - \tV_1 Q\|_{2\to\infty}:Q\in\bbU^K\} \leq C_2 \|\tT-T\|_{HS},
	\label{eq:result1}
	\end{equation}
	where $\bbU^K$ is the space of unitary matrices in $\bbC^{K \times K}$, and the $2\to\infty$-norm of an operator $A:\bbC^K\to \cH$ is defined as
	\[
	\|A\|_{2\to\infty} = \sup\{\|A\alpha\|_\infty:\alpha\in\bbC^K, \|\alpha\|_2=1\}.
	\]
\end{result1}

It is not hard to notice the correspondence between \eqref{eq:result1} and \eqref{eq:uniform_conv_def}: $V_1$ is the analogue of $\Psi$; $\tV_1$ is the analogue of $\Psi_n$; the two to infinity norm guarantees the convergence is uniform, and the distance metric $d$ is chosen to measure Euclidean distance up to a unitary transformation.\footnote{Normalized eigenfunctions of the same eigenvalue are only determined up to unitary transformation.} This observation justifies naming $\inf \{\|V_1 - \tV_1 Q\|_{2\to\infty}:Q\in\bbU^K\}$ the uniform consistency error. It is also worth mentioning that the constant $C_2$ is inversely proportional to a measure of the eigengap between the $K$-th and $K+1$-th eigenvalues of $T$. This provides further justification for studying the convergence of the leading eigenspace as a whole, rather than studying the convergence of the individual eigenspaces like in \citet{vonluxburg2008Consistency}. Not only is the former more general and realistic, but it leads to better constants as well. In many applications, the top eigenvalues are usually clustered together, but there is a large gap between the top eigenvalues and the rest of the spectrum. Thus it is hard to estimate the corresponding eigenfunctions individually, but it is easy to estimate them altogether up to a unitary transformation.

Result 1 provides a general approach to proving uniform consistency: we simply need to bound the difference between the sample level operator and its population counterpart in an appropriate norm. The proof of Result 1 is also interesting in its own right. 
We identify the invariant subspace directly by solving an operator equation and appeal to the Newton-Kantorovich Theorem to characterize the solution. The main benefit of this approach is it overcomes the limitations of traditional approaches when working with non-unitarily invariant norms.

Our second main result is a finite sample uniform error bound for embedding in normalized spectral clustering. Let $C_b(\cX)$ denote the space of bounded continuous complex-valued functions over $\cX$. Define $V_1: \bbC^K \to C_b(\cX)$ as $ V_1 \alpha = \sum_{i=1}^{K} \alpha_i f_i$ where $f_1,\dots,f_K$ are the leading real-valued eigenfunctions of the normalized Laplacian operator. Define $\hV_{n,1}: \bbC^K \to C_b(\cX)$ as $ \hV_{n,1} \alpha = \sum_{i=1}^{K} \alpha_i \hf_{n,i}$ where $\hf_{n,i}$ are defined as in \eqref{eq:def-psin} and real-valued. Applying Result 1, we obtain
\begin{result2}[Uniform consistency for normalized spectral clustering]
	Under suitable conditions, there are constants $C_4,C_5 > 0$ that are independent of $n$ and the randomness of the sample, such that whenever the sample size $n \geq C_4 \tau$ for some $\tau >1$, we have 
	\[
	\inf \{\|V_1 - \hV_{n,1} Q\|_{2\to\infty}:Q\in\bbU^K\} \leq C_5 \frac{\sqrt{\tau}}{\sqrt{n}},
	\]
	with probability at least $1-8e^{-\tau}$.
\end{result2}

Despite the fact that that Result 2 is an application of Result 1, its proof is by no means simple. The main technical challenge is establishing concentration bounds for Hilbert-Schmidt operators. Result 2 suggests that the convergence rate, under appropriate conditions, is $\cO(\frac{1}{\sqrt{n}})$ (modulo a log factor). Moreover, in the context of clustering, the notion of uniform consistency leads to stronger assurances about the correctness of the clustering output. For example, in spectral clustering, the points are clustered based on their embeddings. Uniform convergence implies the embeddings of all points are close to their population counterpart. As long as the error in the embeddings are small enough, it is possible to show that all points are correctly clustered. This is not possible if the embeddings only ``converge in mean'': $\|V_1 - \hV_{n,1} Q\|_{2\to L^2(\cX, \bbP)} \to 0$.

\subsection{Related literature}
Most closely related to our results are the work of \citet{vonluxburg2008Consistency} and \citet{rosasco2010Learninga}. For normalized spectral clustering, \citet{vonluxburg2008Consistency} proved the convergence of the eigenvalues and spectral projectors of the sample level operator to their population counterparts. They also established uniform convergence of eigenfunctions whose corresponding eigenvalue has multiplicity one to their population counterparts. 
Our results are in the same vein as theirs in that we also study uniform convergence of eigenfunctions. We improve upon their uniform convergence result by considering multiple eigenfunctions at once and allowing for non-simple eigenvalues. 
In the context of unnormalized spectral clustering, \citet{rosasco2010Learninga} studied the convergence rate of the $l^2$-distance between the ordered spectrum of the sample level operator and that of the population operator and derived finite sample bound for the deviation between the sample level and population level spectral projections associated with the top $K$ eigenvalues. They also obtained finite sample spectral projection error bound for asymmetric normalized graph Laplacian. Our work is related to theirs because both study the convergence of the leading eigenspace and we owe much of our concentration results to them. The two works are also very distinct at the same time. Firstly, our notion of convergence is in uniform consistency of the eigenfunction, theirs is in terms of the induced RKHS norm between the spectral projectors. To the best of our knowledge, it is non-trivial to establish one set of results from the other. Secondly, we study the  normalized symmetric graph Laplacian, while they study the unnormalized graph Laplacian and asymmetric normalized graph Laplacian. 

The general relationship between the spectral properties of an empirical operator/matrix and that of its population counterpart has also been studied under other contexts. In \citet{koltchinskii2000}, it is proved that the ordered spectra of an integral operator and its empirical version tend to zero in $l^2$-distance almost surely if and only if the kernel is square integrable. Convergence rate and distributional limits were also obtained under stronger conditions. In \citet{koltchinskii1998}, the authors extended their own result by proving law of large numbers and central limit theorems for quadratic forms induced by spectral projections. The investigation of spectrum convergence is continued in \citet{mendelson2005} and \citet{mendelson2006}, where the authors associated various types of distance metric between two ordered spectra to the deviation of the sample mean of i.i.d rank one operators from its population mean. Similar problems have also been studied in kernel principal component analysis (KPCA) literature. For example, in \citet{shawetaylor2005} and \citet{blanchard2007}, the concentration property of the sum of the top $K$ eigenvalues and the sum of all but the top $K$ eigenvalues of the empirical kernel matrix are studied, because such partial sums are closely related to the reconstruction error of KPCA. In \citet{zwald2006}, a finite sample error bound on the difference between the projection operator to the leading eigenspace of the empirical covariance operator and that to the leading eigenspace of the population covariance operator is derived.  We remark that none of the results mentioned in this paragraph addressed the consistency of the embedding directly, nor did any consider kernel matrix normalized by the degree matrix.

Unlike our results which are model-agnostic, the property of spectral methods has also been studied in model-specific settings. For example, \citet{rohe2011} and \citet{lei2015} investigated the spectral convergence properties of the graph Laplacian and the consistency of spectral clustering in terms of community membership recovery under stochastic block models. When the data are sampled from a finite mixture of nonparametric distributions, \citet{shi2009} studied how the leading eigenfunctions and eigenvectors of the population level integral operator can reflect clustering information; \citet{schiebinger2015geometry} studied the geometry of the embedded samples generated by normalized spectral clustering and showed that the embedded samples for different clusters are approximately orthogonal when the mixtures have small overlapping and the sample size is large. We remark that in all the results mentioned in this section so far, the kernel function is fixed. For the relationship between the graph Laplacian and the Laplace-Beltrami operator on a manifold and the properties of spectral clustering when the kernel is chosen adaptively, we refer readers to the series of work by Trillos et. al. \cite{Trillos1,Trillos2,Trillos3} and the references therein.

Lastly, entrywise or row-wise analysis for eigevectors and eigenspaces of matrices has been studied in recent literature. For general purpose, deterministic $\ell_{2\to\infty}$ bounds are derived by \citet{fan2018eigenvector, cape2019two, damle2020uniform}, where the first two are for rectangular matrices and the last one is for symmetric matrices. When probabilistic assumptions are imposed on the true and perturbed matrices, \citet{cape2019signal, abbe2020entrywise, mao2021estimating} obtain stronger $\ell_{2\to\infty}$ bounds for various tasks by taking advantages of the structure of the random matrices. Comparing to these literature, we remark that our work provides a deterministic bound which aids in the $\ell_{2\to\infty}$ perturbation theory of linear operators, and the bound can be applied to many problems in statistics (\eg, spectral clustering and kernel PCA) for helping characterize the spectral embedding of individual samples.

\subsection{Main contributions}
We view our main contributions as three fold and list them in the order of appearance. First, we demonstrate that the Newton-Kantorovich Theorem provides a general approach to studying the effect of local perturbations on the invariant spaces of an operator. This result may be of independent interest to researchers working on spectral perturbation theory. Second, we study the convergence of the embeddings via uniform consistency error and offer a general recipe for establishing non-asymptotic uniform consistency type results that handles multiple eigenfunctions at once and is not limited to simple eigenvalues. Third, we apply our recipe to normalized spectral clustering and give a novel proof of finite sample error bound on the uniform consistency error of the spectral embeddings.

\subsection{Structure of the paper}
The rest of the paper is organized as follows: A review of relevant mathematical preliminaries is provided in Section 2; the exact statement and proof for Result 1 is in section 3; the exact statement and proof for Result 2 is in section 4; a discussion of various issues relevant to our results is in section 5; proofs of some secondary lemmas and an additional application are relegated to the appendix. 
\section{Preliminaries and notations}
In this section, we discuss various basic concepts and preliminary results that will be used repeatedly throughout the paper. More technical results that are section specific shall be introduced as needed later in the paper.

\subsection{Operator theory}

We assume readers are familiar with basic concepts such as Banach spaces, Hilbert spaces, linear operators, operator norms, and spectra of operators. From now on, we let $\bbK$ denote either the field of real numbers or the field of complex numbers, $\cY_1,\cY_2$ denote Banach spaces over the same field $\bbK$, and $\cH_1,\cH_2$ denote Hilbert spaces over the same field $\bbK$.

We would like to first highlight a nuance in the definition of linear operator. For a linear operator $A:\cY_1 \to \cY_2$, we adopt the convention from \citet{kato1980} and allow $A$ to be defined only on a linear manifold in $\cY_1$,\footnote{In \citet{kato1980}, linear manifold is just a synonym for affine subspace.} denoted $D(A)$.\footnote{In \citet{ciarlet2013}, for example, such a distinction isn't made.} We call $D(A)$ the {\it domain} of $A$ and can naturally define the {\it range} of $A$ as $R(A):=\{Ay \;|\; y \in D(A)\}$. As for $\cY_1,\cY_2$, we call them the {\it domain space} and the {\it range space} respectively. 

For a linear operator $A:\cY_1 \to \cY_2$, we say $A$ {\it bounded} if $\sup_{\|y\|_{\cY_1}=1} \|Ay\|_{\cY_2} < \infty$, and when $A$ is bounded, we define its {\it operator norm} $\|A\|:=\sup_{\|y\|_{\cY_1}=1} \|Ay\|_{\cY_2}$. Throughout the paper, when $\| \cdot \|$ has no subscript, it defaults to operator norm. We use $\cL(\cY_1,\cY_2)$ to denote the space of all bounded linear operators from $\cY_1$ to $\cY_2$. When $\cY_1 = \cY_2$, we simply write $\cL(\cY_1,\cY_1)$ as $\cL(\cY_1)$. We say $A$ is a {\it compact operator} if the closure of the image of any bounded set in $\cY_1$ under $A$ is compact. It is know that compact operators are bounded. 

For a bounded linear operator $A:\cH_1 \to \cH_2$, define its {\it adjoint} $A^*:\cH_2 \to \cH_1$ as the unique operator from $\cH_2$ to $\cH_1$ satisfying $\ip{Af,g}_{\cH_2}=\ip{f,A^*g}_{\cH_1}$ for $\forall f \in \cH_1,\forall g \in \cH_2$. Here, we use $\ip{\cdot,\cdot}_\cH$ to denote the inner product in the Hilbert space $\cH$. A basic property of $A^*$ is $\|A\|_{\cH_1 \to \cH_2} = \|A^*\|_{\cH_2 \to \cH_1}$, where the norm is operator norm and we use the $\cH_1 \to \cH_2$ notation to explicitly specify the domain space and range space. When $\cH_1 = \cH_2$, $A$ is called {\it self-adjoint} if $A$ is equal to its adjoint $A^*$, and $A$ is called {\it positive} if for any $f \in \cH_1$, $\ip{Af, f}_{\cH_1} \geq 0$.

We say a Hilbert space is {\it separable} if it has a basis of countably many elements. We say a bounded linear operator $A:\cH_1 \to \cH_2$ is {\it Hilbert-Schmidt} if $\sum_{i \in \cI} \|A e_i \|_{\cH_2}^2 < \infty$ where $\{e_i: i \in \cI\}$ is an orthonormal basis of $\cH_1$. We use $HS(\cH_1,\cH_2)$ to denote the space of all Hilbert-Schmidt operators from $\cH_1$ to $\cH_2$; this space is also a Hilbert space with respect to the inner product $\ip{A,B}_{HS}:=\sum_{i \in \cI} \ip{Ae_i,Be_i}_{\cH_2}$. We use $\|\cdot\|_{HS}$ to denote the norm induced by this inner product and note that all Hilbert-Schmidt operators are compact. We also note the Hilbert-Schmidt norm is stronger than operator norm in that $\|A\| \leq \| A\|_{HS}$, and the Hilbert-Schmidt norm is compatible with the operator norm in the following sense: for any Hilbert-Schmidt operator $A$ and bounded operator $B$, their product $AB$ and $BA$ are Hilbert-Schmidt and their Hilbert-Schmidt norm satisfies
\begin{gather*}
	\|AB\|_{HS} \leq \|A\|_{HS} \|B\|,\\
	\|BA\|_{HS} \leq \|B\| \|A\|_{HS}. 
\end{gather*}

\subsection{Spectral theory for linear operators}
\label{sec:spec-theory}
In this subsection, we set $\bbK=\bbC$. Let $A:\cH_1 \to \cH_1$ be a bounded linear operator. Similar to matrices, we say $\lambda \in \bbC$ is an {\it eigenvalue} of $A$ if for some {\it eigenvector} $f \in \cH_1$,
\[
Af = \lambda f \;\; \text{      and      } \;\; f \neq 0.
\]

In other words, $\lambda$ is an eigenvalue if the null space $N(\lambda I - A)$ is not $\{0\}$. We call $N(\lambda I - A)$ the {\it eigenspace} associated with $\lambda$, and the dimension of $N(\lambda I - A)$ is called the {\it geometric multiplicity} of $\lambda$. The spectrum of $A$ is defined as $\sigma(A):= \bbC \setminus \rho(A)$, where $\rho(A)$ is the {\it resolvent set}
\[
\rho(A) := \{ \lambda \in \bbC \;|\; (\lambda I - A)^{-1} \in \cL(\cH_1) \}.
\]

Eigenvalues are in the spectrum, but $\sigma(A)$ generally contains more than just eigenvalues. If $A$ is a compact operator, $\sigma(A)$ has the following structure: $\sigma(A) \setminus \{0\}$ is a countable set of isolated eigenvalues, each with finite geometric multiplicity, and the only possible accumulation point of $\sigma(A)$ is $0$. If $A$ is self-adjoint, then all the eigenvalues must be real. If $A$ is a positive operator, then all its eigenvalues are real and non-negative. Therefore for any compact positive self-adjoint operator, we can arrange the non-zero eigenvalues of $A$ into a non-increasing sequence of positive numbers,\footnote{Because the largest eigenvalue is bounded by the operator norm of $A$.} and repeat each eigenvalue for a number of times equal to its geometric multiplicity.

Another remarkable fact in the spectral theory of linear operators concerns spectral projection. Let $\Gamma \subset \rho(A)$ be a closed simple rectifiable curve. Assume the part of $\sigma(A)$ enclosed inside $\Gamma$ is a finite number of eigenvalues $\lambda_1,\lambda_2,\ldots,\lambda_K$. Then the projection $P:\cH_1 \to \cH_1$ which projects to the direct sum of the eigenspaces of $\{\lambda_i\}_{i=1}^K$, i.e. $\bigoplus_{i=1}^K N(\lambda_i I - A)$, can be defined. Technicalities aside, this projection has the following contour integration expression
\[
P = \frac{1}{2\pi i} \int_{\Gamma} (\gamma I - A)^{-1} d \gamma.
\]

\subsection{Function spaces}
Let $\cX$ be a bounded open subset of $\bbR^p$, we now define several function spaces we are going to work with. Define the space of bounded continuous functions $C_b(\cX)$ as
\[
C_b(\cX) := \{ f \;|\; f:\cX \to \bbC \text{ is a bounded continous function}\}.
\]
It can be shown that $\|f\|_\infty:= \sup_{x \in \cX} |f(x)|$ is a norm on $C_b(\cX)$ and $C_b(\cX)$ is a Banach space with respect to this infinity norm.

We can also define the space of complex-valued square integrable functions $L^2(\cX, \mu)$. Suppose $(\cX, \cB, \mu)$ is a measure space where $\cB$ is the Lebesgue $\sigma$-algebra and $\mu$ is a measure, then $L^2(\cX, \mu)$ is defined as the set of measurable functions such that
\[
\int_\cX |f(x)|^2 d\mu < \infty.
\]
In fact, $L^2(\cX, \mu)$ is a Hilbert space with respect to the inner product
\[
\ip{f,g}_{L^2(\cX, \mu)}:= \int_\cX f \bar{g} d\mu.
\]

We also define $l^2$, the space of square summable infinite sequence of complex numbers. It is well known that $l^2$ is a complex Hilbert space with respect to the inner product
\[
\ip{u,v}_{l^2}:= \sum_{i=1}^{\infty} u_i \bar{v_i}.
\]

\subsection{Reproducing Kernel Hilbert Space (RKHS)}
Let $\cX$ be a subset of $\bbR^p$ and $\cH$ be a set of functions $f:\cX \to \bbC$. Suppose $\cH$ is a Hilbert space with respect to some inner product $\ip{\cdot, \cdot}_{\cH}$. If in $\cH$, all point evaluation functionals are bounded, i.e.
\[
|f(x)| \leq C_x \|f\|_{\cH} \quad \forall f \in \cH,
\]
where $C_x$ is some constant depending on $x$, then it can be shown that there exists a unique conjugate symmetric positive definite kernel function $k:\cX \times \cX \to \bbC$, such that the following reproducing property is satisfied:
\[
f(x) = \ip{f, k(\cdot, x)}.
\]
The kernel $k$ is called the reproducing kernel and $\cH$ is called a {\it reproducing kernel Hilbert space (RKHS)}.

We say a kernel function $k$ is {\it positive definite} if for any $n \in \bbN^+$, any $x_1,x_2,\ldots,x_n \in \cX$ and any $\xi_1,\xi_2,\ldots,\xi_n \in \bbC$, the quadratic form 
\begin{equation}
\sum_{i,j=1}^n k(x_i,x_j)\bar{\xi_i}\xi_j
\end{equation}
is non-negative. The kernel function for any RKHS is positive definite.

\section{Uniform error bound for spectral embedding}
In this section, we prove the first result described in the previous section. We first lay out the assumptions and notations. Let $\cX$ be a subset of $\bbR^p$ and $\bbP$ be a probability measure whose density function is supported on $\cX$. Let $L^2(\cX, \bbP)$ denote the space of complex-valued square integrable functions on $\cX$ and $\cH$ be a subspace of $L^2(\cX, \bbP)$. We assume $\cH$ is equipped with its own inner product $\ip{\cdot,\cdot}_\cH$ and is a Hilbert space with respect to this inner product. We also require $\cH$ to be such that for every $h \in \cH$, which is an equivalent class in $L^2(\cX, \bbP)$, there exists a representative function $h'$ in the class such that $h' \in C_b(\cX)$. Since $\supp (\bbP)= \cX$, $h'$ is unique, and we can define infinity norm on $\cH$ by setting $\|h\|_\infty := \|h'\|_\infty$. We require that on $\cH$, the norm induced by the $\cH$-inner product, denoted $\| \cdot \|_\cH$, be stronger than the infinity norm; that is there is a constant $C_{\cH} > 0$ such that $\|f\|_\infty \leq C_\cH \|f\|_\cH$ for all $f \in \cH$.\footnote{This in fact implies $\cH$ is an RKHS. But since we do not use the reproducing property anywhere in the proof, we find framing $\cH$ as an RKHS unnecessary.} 

Let $T$ and $\tT$ be two Hilbert-Schmidt operators from $\cH$ to $\cH$; $\tT$ can be seen as a perturbed version of $T$ and we use $E:=\tT-T$ to denote their difference. Suppose all the eigenvalues of $T$ (counting geometric multiplicity) can be arranged in a non-increasing (possibly infinite) sequence of non-negative real numbers $\lambda_1 \geq \lambda_2 \geq \ldots \geq \lambda_K > \lambda_{K+1} \geq \ldots \geq 0$ with a positive gap between $\lambda_K$ and $\lambda_{K+1}$. Suppose the eigenvalues of $\tT$ can also be arranged in a non-increasing sequence of non-negative real numbers. We do not assume, however, any eigengap for $\tT$.

Let $\{f_i\}_{i=1}^K \subset \cH$ be the eigenfunctions associated with eigenvalues $\lambda_1,\ldots,\lambda_K$. We assume $\{f_i\}_{i=1}^K$ are so picked that they constitute a set of orthonormal vectors in $L^2(\cX, \bbP)$. We then pick $\{f_i\}_{i=K+1}^\infty$ so that $\{f_i\}_{i=1}^\infty$ constitute a complete orthonormal basis of $L^2(\cX, \bbP)$. Define $V_1:\bbC^K \to L^2(\cX, \bbP)$ by $V_1 \alpha = \sum_{i=1}^{K} \alpha_i f_i$ and $V_2: l^2 \to  L^2(\cX, \bbP)$ by $V_2 \beta = \sum_{i=1}^{\infty} \beta_i f_{K+i}$. Define their adjoints $V_1^*, V_2^*$ with respect to the standard inner product on $\bbC^K, l^2$ and $L^2(\cX, \bbP)$. Since $\{f_i\}_{i=1}^K \subset \cH$, we can also view $\cH$ as the range (domain) space of $V_1$ ($V_1^*$). The exact range space of $V_1$ shall be clear from the context.

When the perturbation $E$ has small enough Hilbert-Schmidt norm, $\tT$ necessarily has an eigengap. In this case, the leading $K$-dimensional invariant subspace of $\tT$ is well defined. We pick $\{\tf_i\}_{i=1}^K$ to be an orthonormal set of vectors in $L^2(\cX, \bbP)$ such that they span the leading invariant subspace of $\tT$, and define $\tV_1:\bbC^K \to L^2(\cX, \bbP)$ as $\tV_1 \alpha = \sum_{i=1}^{^K} \alpha_i \tf_i$.

Last but not least, define $V_2^{-1}\cH := \{ l \in l^2 \,	:\, V_2 l \in \cH \}$ and $V_2^*\cH:=\{ V_2^*h  \,	:\, h \in \cH \}$. They are intuitively the ``coordinate space'' for functions in $\cH$ under the basis in $V_2$. Working with these coordinates could simplify our notations. The following facts regarding $V_2^{-1}\cH$ and $V_2^*\cH$ hold true (with proof in appendix).
\begin{lemma}
	Assuming $f_1,\dots,f_K \in \cH$, we have
	\begin{enumerate}
		\item the set $V_2^{-1}\cH$ is equal to the set $V_2^* \cH$;
		
		\item $V_2^{-1}\cH$ is a subspace of $l^2$; it is also a Hilbert space with respect to the $\cH$-induced inner product 
		\[(b_1, b_2)_{V_2^{-1}\cH}:= \ip{V_2 b_1, V_2 b_2}_\cH;\]
		
		\item $V_1 \in \cL(\bbR^K, \cH)$, $V_1^* \in \cL(\cH, \bbR^K)$, $V_2 \in \cL(V_2^{-1}\cH, \cH)$, and $V_2^* \in \cL(\cH, V_2^{-1}\cH)$, with operator norms satisfying
		\begin{gather*}
		\|V_1\|_{2 \to \cH} \leq \sqrt{K}\max_{i \in [K]} \|f_i\|_\cH, \quad \|V_1^*\|_{\cH \to 2} \leq C_\cH\sqrt{K}, \\
		\|V_2\|_{V_2^{-1}\cH \to \cH} =1, \quad \|V_2^*\|_{\cH \to V_2^{-1}\cH} \leq 1 + C_\cH K\max_{i \in [K]} \|f_i\|_\cH.
		\end{gather*}
	\end{enumerate}
	\label{lm:induced_hilbert}
\end{lemma}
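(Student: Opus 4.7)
The plan is to handle the three claims in order, relying on two structural facts about $V_2$ that come for free from the choice of $\{f_i\}_{i=1}^\infty$ as an orthonormal basis of $L^2(\cX,\bbP)$: $V_2$ is an isometric embedding of $l^2$ into $L^2(\cX,\bbP)$, so $V_2^* V_2 = I_{l^2}$, and $V_2 V_2^*$ is the orthogonal projection onto $\overline{\mathrm{range}(V_2)} = \mathrm{span}\{f_i : i > K\}$, which equals $I - V_1 V_1^*$. These identities, combined with the hypothesis $f_1,\dots,f_K \in \cH$, drive every step of the proof.

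For claim (1), the inclusion $V_2^{-1}\cH \subseteq V_2^*\cH$ is immediate: any $l \in V_2^{-1}\cH$ satisfies $l = V_2^*(V_2 l)$ with $V_2 l \in \cH$. For the reverse inclusion, if $l = V_2^* h$ with $h \in \cH$, then $V_2 l = V_2 V_2^* h = h - V_1 V_1^* h$; the second term equals $\sum_{i=1}^K \langle h, f_i\rangle_{L^2} f_i$, a finite linear combination of elements of $\cH$, so $V_2 l \in \cH$.

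For claim (2), linearity of $V_2^{-1}\cH$ as a subset of $l^2$ follows at once from linearity of $V_2$ and the subspace structure of $\cH$; the proposed form is sesquilinear and Hermitian, and positive definite because $V_2$ is injective. The only real work is completeness: given a sequence $\{b_n\}$ Cauchy in the induced norm, $\{V_2 b_n\}$ is Cauchy in $\cH$ and converges to some $h \in \cH$. Since $\|\cdot\|_\cH$ dominates $\|\cdot\|_\infty$, which in turn dominates $\|\cdot\|_{L^2(\cX,\bbP)}$ (as $\bbP$ is a probability measure), $V_2 b_n \to h$ in $L^2$ as well. The isometry property of $V_2$ on $l^2$ then forces $\{b_n\}$ to be Cauchy in $l^2$ with limit $b := V_2^* h$, and continuity of $V_2$ on $l^2$ gives $V_2 b = V_2 V_2^* h = h - V_1 V_1^* h \in \cH$, so $b \in V_2^{-1}\cH$ and $b_n \to b$ in the induced norm.

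Claim (3) is a series of direct estimates. The bound on $\|V_1\|_{2\to\cH}$ follows from the triangle inequality together with Cauchy-Schwarz on $K$ terms. For $\|V_1^*\|_{\cH\to 2}$, the $i$-th coordinate of $V_1^* h$ is $\langle h, f_i\rangle_{L^2}$, bounded in absolute value by $\|h\|_{L^2} \le \|h\|_\infty \le C_\cH \|h\|_\cH$. The identity $\|V_2\|_{V_2^{-1}\cH\to\cH} = 1$ is tautological from the definition of the induced inner product. Finally, using $V_2 V_2^* = I - V_1 V_1^*$, one has $\|V_2^* h\|_{V_2^{-1}\cH} = \|V_2 V_2^* h\|_\cH \le \|h\|_\cH + \|V_1\|_{2\to\cH}\,\|V_1^*\|_{\cH\to 2}\,\|h\|_\cH$, and the stated constant comes from inserting the two preceding bounds. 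The main (though mild) obstacle is really in claim (2), where one must verify that the $l^2$-limit of a Cauchy sequence in the induced norm actually lies in $V_2^{-1}\cH$, not merely in $l^2$; everything else is bookkeeping around the orthogonal decomposition $I = V_1 V_1^* + V_2 V_2^*$.
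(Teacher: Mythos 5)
Your proposal follows the same route as the paper's proof: double inclusion for item (1), completeness via Cauchyness of $V_2 b_n$ in $\cH$ combined with the norm hierarchy for item (2), and the same direct estimates (Cauchy-Schwarz, $V_2V_2^* = I - V_1V_1^*$) for item (3). Items (1) and (3) are correct and essentially identical to the paper's.

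In the completeness argument of item (2) there is a small but real imprecision you should fix. You establish that $b_n \to b := V_2^* h$ in $l^2$, write $V_2 b = V_2V_2^*h = h - V_1V_1^*h$, and then assert ``$b_n \to b$ in the induced norm.'' The latter requires $\|V_2 b_n - V_2 b\|_{\cH}\to 0$; since you already know $V_2 b_n \to h$ in $\cH$, this holds precisely when $V_2 b = h$, i.e.\ when $V_1 V_1^* h = 0$. As written, the expression $V_2 b = h - V_1 V_1^* h$ leaves open the possibility that $V_2 b \neq h$, in which case your convergence claim would fail. The missing observation is that $h$ lies in the closed range of $V_2$ (each $V_2 b_n$ is orthogonal to $f_1,\dots,f_K$ in $L^2$, and $V_2 b_n\to h$ in $L^2$, so $h\perp f_1,\dots,f_K$), which is exactly what the paper's proof invokes. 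Equivalently, $b_n \to b$ in $l^2$ forces $V_2 b_n \to V_2 b$ in $L^2$, so $V_2 b = h$ by uniqueness of limits. Either one-line addition closes the gap.
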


Because of item 1 of the lemma, we do not need to distinguish between $V_2^{-1}\cH$ and $V_2^* \cH$; we denote both by $\tl^2$. To keep notation manageable, define $\tT_{ij} = V_i^*\tT V_j$ for any $i,j \in \{1, 2\}$; \eg\ $\tT_{21}$ is shorthand for $V_2^*\tT V_1$. 

We also need the following quantities to define the constants in Result 1. Let $\Gamma$ be the boundary of the rectangle
\begin{equation}
\big\{ \lambda \in \bbC \;|\; \frac{\lambda_K + \lambda_{K+1}}{2} \leq re(\lambda) \leq \|T\|_{\cH \to \cH} +1, |im(\lambda)|\leq 1 \big\}.
\label{eq:def-gamma}
\end{equation}
Let $l(\Gamma)$ denote the length of $\Gamma$ and define
\begin{equation}
\eta = \frac{1}{\sup_{\lambda \in \Gamma} \| (\lambda I - A)^{-1} \|_{op}},
\label{eq:def-eta}
\end{equation}
which is necessarily finite. Define a measure of spectral separation
\[
\delta:=\text{sep}(T_{11}, T_{22}) := \inf \Big\{ \big\| T_{22}Y-YT_{11} \big\|_{HS} \; \Big| \; Y \in \cL(\bbC^K, \tl^2), \|Y\|_{HS}=1 \Big\}.
\]
It is reasonable to expect that the larger the eigengap, the larger the $\text{sep}(T_{11}, T_{22})$. And when $T$ has only $K$ eigenvalues or is self-adjoint from $\cH$ to $\cH$, it is provably so that the separation $\text{sep}(T_{11}, T_{22})$ is lower bounded by the eigengap $\lambda_K -\lambda_{K+1}$.

Define constant
\[
C_3 := \max \Big \{C_\cH, 1 + C_\cH \|V_1\|_{2 \to \cH}, \|V_1\|_{2 \to \cH}(1 + C_\cH \|V_1\|_{2 \to \cH})  \Big \}.
\]
We are now ready to state the main theorem of this section.

\begin{theorem}[General recipe for uniform consistency]
	Under the assumptions above, as long as $E:=\tT-T$ as an operator from $\cH$ to $\cH$ has Hilbert Schmidt norm $\|E\|_{HS} \leq C_1$, the uniform consistency error satisfies
	\[
	\inf \{\|V_1 - \tV_1 Q\|_{2\to\infty}:Q\in\bbU^K\} \leq C_2 \|E\|_{HS}.
	\]
	Here, $C_1,C_2 >0$ are two constants independent of the choice of $\tT$ defined as
	\begin{gather}
	    C_1 := \frac{1}{C_3} \min \Big\{ \frac{\lambda_{K}-\lambda_{K+1}}{8}, \frac{1}{2}, \frac{\delta}{4}, \frac{\delta}{4 C_\cH}, \frac{C_3\eta ^2}{\eta + l(\Gamma)/2\pi} \Big\}, \label{eq:def-C1} \\
	    C_2 := \frac{4C_3C_\cH (\|V_1\|_{2\to\infty} +1)}{\delta}.\label{eq:def-C2}
	\end{gather}
	\label{thm:main-thm}
\end{theorem}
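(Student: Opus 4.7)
My plan is to identify the leading invariant subspace of $\tT$ directly as the graph of an operator $Y:\bbC^K\to\tl^2$ relative to the basis $\{f_i\}_{i=1}^\infty$, derive an operator Riccati equation that $Y$ must satisfy, solve it quantitatively via the Newton--Kantorovich theorem, and then convert the resulting Hilbert--Schmidt bound on $Y$ into a $2\to\infty$ bound via the continuous embedding $\cH \hookrightarrow C_b(\cX)$ with constant $C_\cH$. First, I would use the contour $\Gamma$ from \eqref{eq:def-gamma} and a Neumann-series argument on $(\gamma I - T)^{-1}$ to show that whenever $\|E\|_{HS}$ is below the $\eta + l(\Gamma)/(2\pi)$ threshold built into $C_1$, the curve $\Gamma$ lies in $\rho(\tT)$ and the Riesz projector $\tilde P = (2\pi i)^{-1}\int_\Gamma (\gamma I - \tT)^{-1}\, d\gamma$ is a rank-$K$ projector onto an invariant subspace of $\tT$ sufficiently close to $\text{range}(V_1)$ to be parametrized as $\{V_1\alpha + V_2 Y\alpha : \alpha\in\bbC^K\}$ for some $Y$ with small HS norm. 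The invariance condition $\tT(V_1 + V_2 Y) = (V_1 + V_2 Y) M$, after taking $V_1^*$ to identify $M = \tT_{11} + \tT_{12} Y$ and then $V_2^*$, reduces to the Riccati equation
\[
F(Y) := \tT_{22} Y - Y\tT_{11} - Y\tT_{12} Y + \tT_{21} = 0.
\]

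Next, I would apply the Newton--Kantorovich theorem to $F$ on the Hilbert space $HS(\bbC^K,\tl^2)$ at the initial guess $Y_0 = 0$. The residual $F(0) = \tT_{21} = V_2^* E V_1$ obeys $\|F(0)\|_{HS} \leq \|V_2^*\|\,\|V_1\|\,\|E\|_{HS}$; the Fréchet derivative $DF(0):H\mapsto \tT_{22}H - H\tT_{11}$ is an $O(\|E\|)$ perturbation of the Sylvester map $\mathcal{S}(H) = T_{22}H - HT_{11}$, whose inverse satisfies $\|\mathcal{S}^{-1}\|\le 1/\delta$ by the very definition of $\delta$; and the Lipschitz constant of $DF$ is controlled by $\|\tT_{12}\|_{op}$. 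The smallness factors $\delta/4$ and $\delta/(4C_\cH)$ embedded in $C_1$ are exactly what is needed to ensure $\|DF(0)^{-1}\| \le 2/\delta$ and to satisfy the standard Kantorovich inequality, yielding a solution with $\|Y\|_{HS} \le (2/\delta)\|F(0)\|_{HS}$, hence proportional to $\|E\|_{HS}/\delta$. To finish, since $\{f_i\}_{i=1}^K$ are $L^2$-orthonormal, $W := V_1 + V_2 Y$ satisfies $W^*W = I + Y^*Y$, so $\hat W := W(I+Y^*Y)^{-1/2}$ is an orthonormal basis of the perturbed invariant subspace; since $\tV_1$ is another such basis, there is $Q\in\bbU^K$ with $\tV_1 Q = \hat W$. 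Expanding
\[
V_1 - \tV_1 Q = V_1\bigl(I - (I+Y^*Y)^{-1/2}\bigr) - V_2 Y(I+Y^*Y)^{-1/2}
\]
and using $\|V_2 Y\alpha\|_\infty \le C_\cH \|V_2 Y\alpha\|_\cH = C_\cH\|Y\alpha\|_{\tl^2}\le C_\cH\|Y\|_{HS}\|\alpha\|_2$ from Lemma~\ref{lm:induced_hilbert} gives a leading-order contribution of size $C_\cH \|Y\|_{HS}$, while the $V_1(I - (I+Y^*Y)^{-1/2})$ piece is $O(\|V_1\|_{2\to\infty}\|Y\|_{HS}^2)$. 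Combining reproduces a bound of the form $C_2 \|E\|_{HS}$ with $C_2$ proportional to $C_\cH(\|V_1\|_{2\to\infty}+1)/\delta$, matching \eqref{eq:def-C2}.

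The main obstacle I anticipate lies in the Newton--Kantorovich step: verifying the Kantorovich inequality with the precise explicit constants that define $C_1$ requires carefully propagating the perturbation $E$ simultaneously through the Sylvester inverse, the Lipschitz constant of $DF$, and the residual bound, while keeping track of $\delta$, $C_\cH$, $\|V_1\|_{2\to\cH}$, $\eta$, and $l(\Gamma)$ in a single coupled estimate; the constant $C_3$ is essentially a bookkeeping device that packages the worst dependence of these quantities. A secondary subtlety is confirming in the setup step that the perturbed leading invariant subspace really lies in the graph regime over $\text{range}(V_1)$ before the Riccati parametrization can be introduced; this is a quantitative continuity statement for the Riesz projector and is the reason the factor $C_3\eta^2/(\eta + l(\Gamma)/2\pi)$ is built into $C_1$.
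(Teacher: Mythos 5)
Your proposal is correct and takes essentially the same route as the paper: parametrize the perturbed invariant subspace as a graph over $\mathrm{range}(V_1)$, characterize the graph operator $Y$ by the Riccati equation $\tT_{21}+\tT_{22}Y=Y\tT_{11}+Y\tT_{12}Y$, solve it quantitatively via Newton--Kantorovich with the Sylvester map $H\mapsto T_{22}H-HT_{11}$ (whose inverse has norm $1/\delta$) as comparison operator, orthonormalize via $(I+Y^*Y)^{-1/2}$, and push through the $C_\cH$ embedding to obtain the $2\to\infty$ bound. The only organizational difference is that you place the Riesz-projector/Neumann continuity argument before Newton--Kantorovich to justify the graph regime a priori, whereas the paper first solves the Riccati equation and then verifies a posteriori (via the dimension count, the eigenvalue-location lemma, and the Anselone-type projector-continuity theorem) that the resulting invariant subspace is the leading one.
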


We remark that since $C_2$ is inversely proportional to $\delta$, it is beneficial to study the convergence of the leading eigenspace as a whole. When eigenspaces are treated individually, each eigenspace converges slowly because the leading eigenvalues may cluster together and we have a small $\delta$, but when treated as a whole, we get a larger $\delta$ and thus faster convergence because the leading eigenvalues are well-separated from the rest of the spectrum.

The rest of the section is devoted to proving Theorem \ref{thm:main-thm}. The proof strategy is to express $\tV_1 Q$ in terms of the solution of an operator equation and directly bound $\|V_1 - \tV_1 Q\|_{2\to\infty}$. We present the proof in five steps. In step one, we characterize the invariant subspace of $\tT$ in terms of the solution of a quadratic operator equation. In step two, we apply the Newton-Kantorovich Theorem to show this equation does have a solution when the perturbation is small. In step three, we introduce some additional conditions that guarantee the invariant space from step two is the leading invariant space. In step four, we directly bound the error term $\|V_1 - \tV_1 Q\|_{2\to\infty}$. In step five, we assemble all pieces together and prove Theorem \ref{thm:main-thm}. A similar approach was used in \citet{stewart71} to study the invariant subspace of matrices.

\subsection{Step one: equation characterization of the invariant subspace}
In this section, our goal is to find a $Y \in \cL(\bbC^K, \tl^2)$ such that the range of $V_1+V_2 Y \in \cL(\bbC^K, \cH)$ is an invariant subspace of $\tT$. It turns out that any $Y$ that satisfies the following quadratic operator equation suffices.

\begin{proposition}
	As long as $Y \in \cL(\bbC^K, \tl^2)$ satisfies the equation
	\begin{equation}
	\tT_{21} + \tT_{22} Y = Y\tT_{11} + Y \tT_{12} Y,
	\label{eq:quad_eq}
	\end{equation}
	the range of $V_1+V_2 Y \in \cL(\bbC^K, \cH)$ is an invariant subspace of $\tT$. 
\end{proposition}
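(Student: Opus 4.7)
The plan is to show directly that $\tT(V_1+V_2Y)\alpha$ lies in the range of $V_1+V_2Y$ for every $\alpha\in\bbC^K$, by producing an explicit factorization
\[
\tT(V_1+V_2Y) = (V_1+V_2Y)\, S
\]
for some $S\in\cL(\bbC^K)$. The shape of the quadratic equation strongly suggests the candidate $S := \tT_{11}+\tT_{12}Y$, and the whole proof will amount to verifying this guess.

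The first ingredient is a resolution of the identity. Since $\{f_i\}_{i=1}^\infty$ was chosen as a complete orthonormal basis of $L^2(\cX,\bbP)$, and $V_1^*,V_2^*$ are defined relative to the $L^2$ inner product, we have $V_1V_1^* + V_2V_2^* = I$ on $L^2(\cX,\bbP)$. Applied to the element $\tT(V_1+V_2Y)\alpha$ (which in fact lies in $\cH\subset L^2$), this yields
\[
\tT(V_1+V_2Y) \;=\; V_1\bigl[V_1^*\tT(V_1+V_2Y)\bigr] + V_2\bigl[V_2^*\tT(V_1+V_2Y)\bigr].
\]
Expanding each bracket via $\tT_{ij}=V_i^*\tT V_j$ gives $V_1^*\tT(V_1+V_2Y)=\tT_{11}+\tT_{12}Y$ and $V_2^*\tT(V_1+V_2Y)=\tT_{21}+\tT_{22}Y$.

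The second ingredient is the hypothesized equation \eqref{eq:quad_eq}. Substituting $\tT_{21}+\tT_{22}Y = Y\tT_{11}+Y\tT_{12}Y = Y(\tT_{11}+\tT_{12}Y)$ into the second bracket and pulling the common factor out to the right, I obtain the clean identity
\[
\tT(V_1+V_2Y) \;=\; V_1(\tT_{11}+\tT_{12}Y) + V_2Y(\tT_{11}+\tT_{12}Y) \;=\; (V_1+V_2Y)(\tT_{11}+\tT_{12}Y),
\]
from which $\tT$-invariance of the range is immediate: for every $\alpha\in\bbC^K$, $\tT(V_1+V_2Y)\alpha = (V_1+V_2Y)\bigl[(\tT_{11}+\tT_{12}Y)\alpha\bigr]\in R(V_1+V_2Y)$.

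The only nontrivial bookkeeping is to check that every composition makes sense across the four spaces $\bbC^K,\tl^2,\cH,L^2(\cX,\bbP)$: with $Y\in\cL(\bbC^K,\tl^2)$ one reads off $\tT_{11},\tT_{12}Y\in\cL(\bbC^K)$, $\tT_{21},\tT_{22}Y,Y\tT_{11},Y\tT_{12}Y\in\cL(\bbC^K,\tl^2)$, and the $L^2$ resolution of identity is being applied to an element of the $\tT$-invariant domain space $\cH$. These checks follow from Lemma~\ref{lm:induced_hilbert} and the definitions. I do not anticipate any real obstacle here; the argument is essentially algebraic once the basis-expansion identity $V_1V_1^*+V_2V_2^*=I_{L^2}$ is in hand.
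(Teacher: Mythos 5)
Your proof is correct and follows essentially the same route as the paper's: insert the resolution of identity $V_1V_1^*+V_2V_2^*=I$ in front of $\tT(V_1+V_2Y)$, expand, substitute the quadratic equation, and factor to get $\tT(V_1+V_2Y)=(V_1+V_2Y)(\tT_{11}+\tT_{12}Y)$. The extra bookkeeping about which spaces the compositions live in mirrors the paper's appeal to Lemma~\ref{lm:induced_hilbert}.
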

\begin{proof}
	First, we note \eqref{eq:quad_eq} is a well-defined equation of operators in $\cL(\bbC^K, \tl^2)$. This can be seen from our assumption $\tT \in \cL(\cH)$ and item 3 of Lemma \ref{lm:induced_hilbert}. Next, we assert that equation \eqref{eq:quad_eq} implies\footnote{We do not differentiate equality in $L^2(\cX, \bbP)$ from equality in $\cH$, because the two are equivalent.}
	\begin{equation}
	\tT (V_1 + V_2 Y)  =(V_1 + V_2 Y) (\tT_{11} + \tT_{12} Y),
	\end{equation}
	which suggests that the range of $V_1 + V_2 Y$ is invariant under $\tT$. To prove this assertion, note
	\begin{align*}
	\tT (V_1 + V_2 Y) &{ = } (V_1V_1^* + V_2 V_2^*)\tT (V_1 + V_2 Y) \\
	&= V_1\tT_{11} + V_1\tT_{12}Y + V_2 \tT_{21} + V_2 \tT_{22} Y \\
	&= V_1\tT_{11} + V_1\tT_{12}Y + V_2 Y\tT_{11} + V_2 Y \tT_{12} Y \\
	&= (V_1 + V_2 Y) (\tT_{11} + \tT_{12} Y).
	\end{align*}
\end{proof}

\subsection{Step two: solve the equation with the Newton-Kantorovich Theorem}
After characterizing the invariant subspace of $\tT$ in terms of a solution of  \eqref{eq:quad_eq}, we apply the Newton-Kantorovich Theorem to prove a solution to \eqref{eq:quad_eq} exists. The Newton-Kantorovich Theorem constructs a root of a function between Banach spaces when certain conditions on the function itself and its first and second order derivatives are met. The construction is algorithmic: the root is the limit point of a sequence of iterates generated by the Newton-Raphson method for root finding. The exact version of the Newton-Kantorovich Theorem we use is from the appendix of \citet{karow2014}.

\begin{theorem}[Newton-Kantorovich]
	Let $\cE,\cZ$ be Banach spaces and let $F:\cZ \to \cE$ be twice continuously differentiable in a sufficiently large neighborhood $\Omega$ of $Z \in \cZ$. Suppose that there exists a linear operator $\bbT:\cZ \to \cE$ with a continuous inverse $\bbT^{-1}$ and satisfying the following conditions:
	\begin{gather}
	\| \bbT^{-1}(F(Z)) \|_\cZ \leq a, \\
	\| \bbT^{-1}\circ F'(Z) - I \|_{op} \leq b, \\
	\| \bbT^{-1}\circ F''(\tZ) \|_{op} \leq c, \qquad \forall \tZ \in \Omega.
	\end{gather}
	If $b < 1$ and $h:=\frac{ac}{(1-b)^2} < \frac{1}{2}$, then there exists a solution $Z_E$ of $F(Z_E)=0$ such that 
	\[
	\|Z_E - Z\|_\cZ \leq r_0 \qquad \text{with} \qquad r_0:= \frac{2 a}{(1-b)(1 + \sqrt{1-2h})}.
	\]
\end{theorem}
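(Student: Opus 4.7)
The plan is to recognize the Newton--Kantorovich theorem as an instance of the Banach fixed point theorem for the modified Newton map $G(W) := W - \bbT^{-1} F(W)$. Since $\bbT^{-1}$ is a bijection, fixed points of $G$ are exactly the roots of $F$, so it suffices to produce a closed ball around $Z$ on which $G$ is a contraction mapping into itself. I would take the candidate ball to be $B := \overline{B(Z, r_0)}$ with $r_0$ as in the statement, and verify the two hypotheses of Banach's theorem using a Taylor expansion of $F$ of order two.

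First, for the self-map property, I would apply Taylor's theorem with integral remainder,
\[
F(W) = F(Z) + F'(Z)(W-Z) + \int_0^1 (1-s)\, F''\bigl(Z + s(W-Z)\bigr)(W-Z)^2\, ds,
\]
and compose with $\bbT^{-1}$. Rearranging $G(W)-Z$, using the three hypotheses $\|\bbT^{-1}F(Z)\|\le a$, $\|\bbT^{-1}F'(Z)-I\|_{op}\le b$, and $\|\bbT^{-1}F''(\,\cdot\,)\|_{op}\le c$ on $\Omega$ (which I would arrange to contain $B$ by the ``sufficiently large neighborhood'' phrase), yields the key estimate
\[
\|G(W)-Z\|_\cZ \;\le\; a + b\|W-Z\|_\cZ + \tfrac{c}{2}\|W-Z\|_\cZ^{\,2}.
\]
The right-hand side as a function of $r=\|W-Z\|_\cZ$ is the scalar majorant $\varphi(r):=\tfrac{c}{2}r^2-(1-b)r+a$. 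The hypothesis $h=\frac{ac}{(1-b)^2}<\tfrac12$ is precisely the condition for $\varphi$ to have two distinct real roots
\[
r_0 \;=\; \frac{(1-b)-\sqrt{(1-b)^2-2ac}}{c} \;=\; \frac{2a}{(1-b)\bigl(1+\sqrt{1-2h}\bigr)},
\qquad r_1 = \frac{2a}{(1-b)\bigl(1-\sqrt{1-2h}\bigr)},
\]
with $\varphi\le 0$ on $[r_0,r_1]$. At $r=r_0$ one has $\varphi(r_0)=0$, i.e.\ $a+br_0+\tfrac{c}{2}r_0^2 = r_0$, so $G(B)\subset B$.

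Second, for the contraction property, I would differentiate: $G'(W)=I-\bbT^{-1}F'(W)$, and split
\[
\|G'(W)\|_{op} \;\le\; \|I-\bbT^{-1}F'(Z)\|_{op} + \|\bbT^{-1}\bigl(F'(Z)-F'(W)\bigr)\|_{op} \;\le\; b + c\|W-Z\|_\cZ,
\]
where the last term is obtained by writing $F'(W)-F'(Z)=\int_0^1 F''(Z+s(W-Z))(W-Z)\,ds$ and applying the hypothesis on $F''$. For $W\in B$ the right-hand side is at most $b+cr_0 = 1-\sqrt{(1-b)^2-2ac}<1$, where the final strict inequality uses $h<\tfrac12$. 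Hence $G$ is a Lipschitz contraction on $B$ with constant $b+cr_0<1$, and the Banach fixed point theorem applied to the complete metric space $B$ produces a unique $Z_E\in B$ with $G(Z_E)=Z_E$, i.e.\ $\bbT^{-1}F(Z_E)=0$; injectivity of $\bbT^{-1}$ (equivalently, existence of $\bbT$) gives $F(Z_E)=0$, and membership in $B$ is exactly the bound $\|Z_E-Z\|_\cZ\le r_0$.

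The main obstacle is purely algebraic bookkeeping: one must match the two natural bounds (the quadratic majorant $\varphi$ for the self-map estimate and the linear majorant $b+cr$ for the contraction estimate) against the same radius $r_0$ and check that the condition $h<\tfrac12$ simultaneously guarantees real roots of $\varphi$, strict contraction $b+cr_0<1$, and the stated closed-form for $r_0$. A secondary, minor technical point is the precise meaning of ``sufficiently large neighborhood'' $\Omega$: I would state at the outset that it suffices for $\Omega$ to contain $\overline{B(Z,r_0)}$ so that the second-derivative bound $c$ is uniformly valid on the iterates, after which the argument proceeds as above.
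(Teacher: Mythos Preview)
The paper does not prove this theorem at all: it is quoted verbatim from the appendix of \citet{karow2014} and used as a black box to establish Proposition~\ref{lm:nk}. So there is no ``paper's own proof'' to compare against.

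That said, your argument is correct and is one of the standard routes to Newton--Kantorovich: reduce to a fixed point of the modified Newton map $G(W)=W-\bbT^{-1}F(W)$, control $\|G(W)-Z\|$ by the quadratic majorant $\varphi(r)=\tfrac{c}{2}r^2-(1-b)r+a$, identify $r_0$ as its smaller root (which, after rationalizing, matches the stated formula), and check the Lipschitz constant $b+cr_0=1-\sqrt{(1-b)^2-2ac}<1$. The only place to be slightly careful is the assertion that $r\mapsto a+br+\tfrac{c}{2}r^2$ is increasing on $[0,r_0]$; this is immediate here since $b,c\ge 0$ are norms, but it is worth saying so you can pass from $\|W-Z\|\le r_0$ to $\|G(W)-Z\|\le r_0$. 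Your remark that ``sufficiently large neighborhood'' should be read as $\Omega\supset \overline{B(Z,r_0)}$ is exactly the right interpretation.
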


We are now ready to prove the proposition below, which states that when $\|E_{11}\|_{HS}$, $\|E_{21}\|_{HS}$, $\|E_{22}\|_{HS}$, $\|E_{12}\|_{HS}$ are small relative to $\text{sep}(T_{11}, T_{22})$, equation \eqref{eq:quad_eq} has a solution.

\begin{proposition}
	Let $\delta := \text{sep}(T_{11}, T_{22})$ and $s_E := \delta - \| E_{22} \|_{HS} -  \| E_{11} \|_{HS}$. When $s_E > 0$ and $\frac{\| E_{21} \|_{HS}   \| E_{12} \|_{HS}}{s_E^2} < \frac{1}{4}$, there exists $Y \in \cL(\bbC^K, \tl^2)$ with $\|Y\|_{HS} \leq \frac{2\| E_{21} \|_{HS}}{s_E}$ such that equation \eqref{eq:quad_eq} is satisfied.
	\label{lm:nk}
\end{proposition}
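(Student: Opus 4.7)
The plan is to apply the Newton--Kantorovich theorem to the map $F:\cL(\bbC^K,\tl^2)\to\cL(\bbC^K,\tl^2)$ defined by
\[
F(Y) \;=\; \tT_{21} + \tT_{22}Y - Y\tT_{11} - Y\tT_{12}Y,
\]
whose zeros are precisely the solutions of \eqref{eq:quad_eq}. Since $F$ is quadratic in $Y$, its Fréchet derivatives are immediate: $F'(Y)H = \tT_{22}H - H\tT_{11} - H\tT_{12}Y - Y\tT_{12}H$ and $F''(Y)(H_1,H_2) = -H_1\tT_{12}H_2 - H_2\tT_{12}H_1$, the Hessian being constant in $Y$. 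I would apply Newton--Kantorovich at the initial point $Z=0$, exploiting the fact that $T_{21}=V_2^*TV_1=0$: the range of $V_1$ is a $T$-invariant subspace of $L^2(\cX,\bbP)$ orthogonal to the range of $V_2$. Consequently $F(0)=E_{21}$, which is already of size $O(\|E\|_{HS})$.

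For the bounded invertible linear operator $\bbT$ required by the theorem, the natural choice is the Sylvester operator $\bbT(H)=T_{22}H-HT_{11}$ on $\cL(\bbC^K,\tl^2)$. By the very definition of $\delta=\text{sep}(T_{11},T_{22})$ we have $\|\bbT(H)\|_{HS}\geq\delta\|H\|_{HS}$, and full invertibility with $\|\bbT^{-1}\|_{op}\leq 1/\delta$ follows from the standard contour-integral representation of the inverse Sylvester operator around a contour separating the spectra of $T_{11}$ (the top $K$ eigenvalues of $T$) and $T_{22}$ (bounded away from them by the eigengap at $\lambda_K$, analogously to the contour $\Gamma$ in \eqref{eq:def-gamma}).

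With $\bbT$ fixed, the three Newton--Kantorovich constants fall out directly. The residual bound is $a=\|\bbT^{-1}(E_{21})\|_{HS}\leq\|E_{21}\|_{HS}/\delta$. For the Jacobian mismatch, $\bbT^{-1}\circ F'(0)-I$ acts by $H\mapsto\bbT^{-1}(E_{22}H-HE_{11})$, and the HS-operator norm compatibility recalled in Section~2 gives $b\leq(\|E_{22}\|_{HS}+\|E_{11}\|_{HS})/\delta=1-s_E/\delta$, so $1-b\geq s_E/\delta>0$ precisely when $s_E>0$. For the Hessian bound, constancy of $F''$ combined with $\|\tT_{12}\|_{op}\leq\|E_{12}\|_{HS}$ (using the analog $T_{12}=0$ of $T_{21}=0$ that holds in the symmetric setting of interest) yields $c\leq 2\|E_{12}\|_{HS}/\delta$. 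Substituting, $h=ac/(1-b)^2\leq 2\|E_{21}\|_{HS}\|E_{12}\|_{HS}/s_E^2<1/2$ is exactly the assumed hypothesis. Newton--Kantorovich then produces a root $Y$ with $\|Y\|_{HS}\leq 2a/[(1-b)(1+\sqrt{1-2h})]$, and the coarse bookkeeping $2a/(1-b)\leq 2\|E_{21}\|_{HS}/s_E$ together with $1+\sqrt{1-2h}\geq 1$ collapses this to the advertised $\|Y\|_{HS}\leq 2\|E_{21}\|_{HS}/s_E$.

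The step I expect to demand the most care is keeping track of the non-standard Hilbert structure on $\tl^2$: its inner product is the $\cH$-induced one from Lemma~\ref{lm:induced_hilbert} rather than the ordinary $\ell^2$ one, so the HS norms of the blocks $T_{ij}$, $E_{ij}$ and the compatibility inequalities $\|AB\|_{HS}\leq\|A\|_{op}\|B\|_{HS}$ invoked throughout the derivative estimates must be interpreted consistently with this choice. A secondary subtlety is justifying invertibility (and not merely a lower bound) of $\bbT$---the separation $\delta$ alone only yields that $\bbT$ is bounded below---which requires the Sylvester contour-integral formula or an equivalent spectral-disjointness argument to complete.
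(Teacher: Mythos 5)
Your proposal is correct and takes essentially the same route as the paper: Newton--Kantorovich applied at $Z=0$ to the quadratic map, with $\bbT(H)=T_{22}H-HT_{11}$, yielding the same constants $a=\|E_{21}\|_{HS}/\delta$, $b=(\|E_{11}\|_{HS}+\|E_{22}\|_{HS})/\delta$, $c=2\|E_{12}\|_{HS}/\delta$, hence $h=2\|E_{21}\|_{HS}\|E_{12}\|_{HS}/s_E^2$ and the same root bound. The two subtleties you flag are exactly the ones the paper itself must (and does) handle: $T_{12}=0$ is used silently in passing from \eqref{eq:quad_eq} to \eqref{eq:condition-on-y} (it holds because $T$ is self-adjoint on $L^2(\cX,\bbP)$ in the intended applications), and full invertibility of $\cT$ is deferred to a lemma in Appendix~\ref{sec:lmnk}, which argues coordinate-by-coordinate from the diagonal form of $T_{11}$ and the disjointness of $\lambda_1,\dots,\lambda_K$ from $\sigma(T_{22})$ and then invokes the bounded inverse theorem --- an argument interchangeable with the Sylvester contour-integral formula you propose.
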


\begin{proof}[Proof of Proposition \ref{lm:nk}]
	After rearrangement, \eqref{eq:quad_eq} is equivalent to
	\begin{equation}
	E_{21} + (T_{22}+E_{22})Y - Y(T_{11}+E_{11}) - YE_{12}Y = 0.
	\label{eq:condition-on-y}
	\end{equation}
	Let $\cE:= \cL(\bbC^K, \tl^2)$ denote the space of bounded linear operators from $\bbC^K$ to $\tl^2$. Since $\bbC^K$ is finite dimensional, any linear operator from $\bbC^K$ to $\tl^2$ is bounded and Hilbert Schmidt. We can thus use Hilbert Schmidt norm as the default norm on $\cE$ and $\cE$ is a Hilbert space with respect to this norm. This fact also allows us to define $f:\cE \mapsto \cE$ as $f(Y) := E_{21} + (T_{22}+E_{22})Y - Y(T_{11}+E_{11}) - YE_{12}Y$ and $\cT:\cE \mapsto \cE$ as $\cT(Y):=T_{22}Y - YT_{11}$. Noting that the image of $\cH$ under $T,E$ are still in $\cH$, we can verify $\cT$ and $f$ are indeed well defined.
	
	We assert $\delta > 0$ and $\cT$ is one-to-one and onto and defer the proof of this to a lemma. The implication of this is $\cT$ is invertible with $\|\cT^{-1}\|_{op}=\frac{1}{\text{sep}(T_{11}, T_{22})} \leq \frac{1}{\delta}$.
	
	We are now ready to verify the three assumptions of Newton-Kantorovich theorem.

	\textbf{(A1):}
	\begin{equation}
	\big\| \cT^{-1}(f(0)) \big\|_{HS} = \big\| \cT^{-1}(E_{21}) \big\|_{HS} \leq \big\| \cT^{-1} \big\|_{op} \big\|E_{21} \big\|_{HS} \leq \frac{\big\|E_{21} \big\|_{HS}}{\delta} =:a.
	\end{equation}
	
	\textbf{(A2):}
	The Fr\'echet derivative of $f$ at $Y_0$ is given by
	\begin{align*}  
	f'(Y_0)\colon \cE & \longrightarrow  \cE \\
	\Delta Y & \longmapsto (T_{22}+E_{22})\Delta Y - \Delta Y(T_{11}+E_{11})\\
	&\quad \quad - \Delta Y E_{12}Y_0 - Y_0 E_{12}\Delta Y.
	\end{align*}
	
	Especially, when $Y_0=0$,
	\begin{align*}  
	f'(0)\colon \cE & \longrightarrow \cE \\
	\Delta Y & \longmapsto (T_{22}+E_{22})\Delta Y - \Delta Y(T_{11}+E_{11}).
	\end{align*}
	
	Consequently,
	\begin{align*}  
	\big( \cT^{-1} \circ f'(0)-I \big) \colon \cE & \longrightarrow \cE \\
	\Delta Y &\longmapsto \cT^{-1}(E_{22} \Delta Y - \Delta Y E_{11}).
	\end{align*}
	
	We thus have
	\begin{align*}
	\big\| \cT^{-1} \circ f'(0)-I  \big\| &= \sup_{\|\Delta Y \|_{HS}=1} \big\|\cT^{-1}(E_{22} \Delta Y - \Delta Y E_{11}) \big\|_{HS} \\
	&\leq \sup_{\|\Delta Y \|_{HS}=1} \big\| \cT^{-1} \big\|_{op} \big\|E_{22} \Delta Y - \Delta Y E_{11} \big\|_{HS} \\
	&\leq \frac{1}{\delta} \sup_{\|\Delta Y \|_{HS}=1} \bigg\{  \big\| E_{22} \big\|_{HS} \big\| \Delta Y \big\|_{HS} + \big\| \Delta Y \big\|_{HS} \big\| E_{11} \big\|_{HS} \bigg\} \\
	&\leq \frac{ \| E_{22} \|_{HS} +  \| E_{11} \|_{HS}}{\delta}:=b.
	\end{align*}
	
	\textbf{(A3):} The second order Fr\'echet derivative at $Y_0$ is a linear operator in $\cL(\cE, \cL(\cE,\cE))$:
	\begin{align*}  
	f''(Y_0) \colon \cE & \longrightarrow \cL(\cE,\cE)\\
	\Delta_1 Y &\longmapsto \cT_{\Delta_1 Y},
	\end{align*}
	where $\cT_{\Delta_1 Y}$ is
	\begin{align*}  
	\cT_{\Delta_1 Y} \colon \cE & \longrightarrow \cE \\
	\Delta_2 Y &\longmapsto -\Delta_1 Y E_{12} {\Delta_2 Y} - \Delta_2 Y E_{12} {\Delta_1 Y}.
	\end{align*}
	
	Therefore the second derivative is a constant for every $Y_0 \in \cE $ and we have,
	\begin{align*}  
	\big\| \cT^{-1} \circ f''(Y_0)  \big\|_{op} &= \sup_{\|\Delta_1 Y \|_{HS}=1} \big\| \cT^{-1} \circ \cT_{\Delta_1 Y}  \big\|_{op} \\
	&= \sup_{\|\Delta_1 Y \|_{HS}=1} \sup_{\|\Delta_2 Y \|_{HS}=1} \big\| \big( \cT^{-1} \circ \cT_{\Delta_1 Y} \big)(\Delta_2 Y)  \big\|_{\cE} \\
	&\leq \frac{1}{\delta} \sup_{\|\Delta_1 Y \|_{HS}=1} \sup_{\|\Delta_2 Y \|_{HS}=1} \big\| \Delta_1 Y E_{12} {\Delta_2 Y} + \Delta_2 Y E_{12} {\Delta_1 Y}  \big\|_{\cE}\\
	&\leq \frac{2\|E_{12}\|_{HS}}{\delta}:=c
	\end{align*}
	
	\textbf{(Conclusion:)} With all assumptions in place, we apply the Newton-Kantorovich Theorem and conclude as follows. When
	\begin{align*}
	&s_E := \delta - \| E_{22} \|_{HS} -  \| E_{11} \|_{HS} > 0 \text{ and }\\
	&\frac{\| E_{21} \|_{HS}   \| E_{12} \|_{HS}}{s_E^2} < \frac{1}{4},
	\end{align*}
	equation \eqref{eq:quad_eq} has solution $Y_E$ such that
	\begin{equation}
	\big\| Y_E \big\| \leq \frac{2\| E_{21} \|_{HS}}{s_E + \sqrt{s_E^2 - 4 \| E_{21} \|_{HS}   \| E_{12} \|_{HS} }} \leq \frac{2\| E_{21} \|_{HS}}{s_E}.
	\label{eq:yebound}
	\end{equation}
\end{proof}

\subsection{Step three: showing the invariant space is the leading eigenspace}
In step two, we obtained an invariant subspace, but there is no guarantee that the invariant subspace we obtained is the leading $K$-dimensional invariant subspace of $\tT$. In this subsection, we give sufficient conditions to ensure this. When $\|E\|_{HS}$ is small, we show several things must happen: first, the range of $V_1+V_2Y_E$ is $K$ dimensional; second, the eigenvalues of the restriction of $\tT$ to this subspace are contained in the interval $[\lambda_K -\epsilon, \lambda_1+\epsilon]$ for some small $\epsilon$; third, $\tT$ has exactly $K$ eigenvalues (counting geometric multiplicity) in the interval $[\lambda_K -\epsilon, \lambda_{max}(\tT)+\epsilon]$. These facts combined implies the invariant subspace from Proposition \ref{lm:nk} has to be the leading $K$-dimensional invariant subspace.

The first point is not hard to show. Suppose the range of $V_1+V_2 Y_E$ has less than $K$ dimensions, then there exists $s \in \bbC^K$ with $\|s\|_2=1$ such that $(V_1 +V_2Y_E)s=0$. But since $\|(V_1 +V_2Y_E)s\|_{L^2} \geq \|V_1s\|_{L^2}- \|V_2Y_Es\|_{L^2} \geq 1 - C_\cH\|V_2Y_Es\|_{\cH} \geq 1 - C_\cH \|Y_E\|_{HS}$, when $\|Y_E\|_{HS}$ is small, the vector $(V_1 +V_2Y_E)s$ simply cannot be zero. Stated formally, we have
\begin{lemma}
	When $C_\cH \|Y_E\|_{HS} < 1$, the range of $V_1+V_2 Y_E$ is $K$-dimensional.
	\label{lm:kdimensional}
\end{lemma}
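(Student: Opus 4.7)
The plan is to argue by contradiction, essentially filling in the sketch given in the paragraph preceding the lemma. Suppose the range of $V_1 + V_2 Y_E$ has dimension strictly less than $K$. Since the domain is $\bbC^K$, the kernel is then nontrivial, so I can pick $s \in \bbC^K$ with $\|s\|_2 = 1$ such that $(V_1 + V_2 Y_E)s = 0$. The goal is to derive a contradiction by showing $\|(V_1 + V_2 Y_E)s\|_{L^2(\cX,\bbP)} > 0$ under the hypothesis $C_\cH \|Y_E\|_{HS} < 1$.

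The key inequality I would chain together is
\[
\|(V_1 + V_2 Y_E)s\|_{L^2} \;\geq\; \|V_1 s\|_{L^2} - \|V_2 Y_E s\|_{L^2},
\]
after which I would handle each piece separately. For the first term, the vectors $\{f_i\}_{i=1}^K$ were chosen to be orthonormal in $L^2(\cX,\bbP)$, which makes $V_1$ an isometry from $(\bbC^K,\|\cdot\|_2)$ into $L^2(\cX,\bbP)$, giving $\|V_1 s\|_{L^2} = \|s\|_2 = 1$. For the second term, since $\bbP$ is a probability measure and any $g \in \cH$ has a bounded continuous representative with $\|g\|_\infty \leq C_\cH \|g\|_\cH$, we have $\|g\|_{L^2} \leq \|g\|_\infty \leq C_\cH \|g\|_\cH$ for every $g \in \cH$; applied to $g = V_2 Y_E s$ this yields $\|V_2 Y_E s\|_{L^2} \leq C_\cH \|V_2 Y_E s\|_\cH$. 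Then Lemma \ref{lm:induced_hilbert}(3) gives $\|V_2\|_{\tl^2 \to \cH} = 1$, and the fact that the Hilbert--Schmidt norm dominates the operator norm yields
\[
\|V_2 Y_E s\|_\cH \;\leq\; \|Y_E s\|_{\tl^2} \;\leq\; \|Y_E\|_{HS}\,\|s\|_2 \;=\; \|Y_E\|_{HS}.
\]

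Combining everything gives $\|(V_1 + V_2 Y_E)s\|_{L^2} \geq 1 - C_\cH \|Y_E\|_{HS}$, which is strictly positive by hypothesis, contradicting the assumption that this expression vanishes. Hence $V_1 + V_2 Y_E$ must be injective and its range is $K$-dimensional. There is no serious obstacle; the only care needed is bookkeeping the norm-domination chain $L^2 \leq \infty \leq C_\cH \cdot \|\cdot\|_\cH$ and invoking the correct bound from Lemma \ref{lm:induced_hilbert} so that the constant $C_\cH$ (and not some worse quantity involving $\|V_1\|_{2\to\cH}$) appears in the final threshold.
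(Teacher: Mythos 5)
Your proof is correct and follows exactly the same argument as the sketch the paper gives just before the lemma statement: contradiction via the reverse triangle inequality, $\|V_1 s\|_{L^2} = 1$ from orthonormality, and the norm chain $L^2 \le \infty \le C_\cH\,\cH$ together with $\|V_2\|_{\tl^2\to\cH}=1$ and $\|Y_E\|_{op}\le\|Y_E\|_{HS}$. You have simply filled in the steps the paper leaves implicit.
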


As for the second point, which is to determine the eigenvalues of the restriction of $\tT$, note that \eqref{eq:quad_eq} implies $\tT$ has matrix representation $\tT_{11} + \tT_{12}Y_E$ in the basis $V_1+V_2 Y_E$. Since eigenvalues are not affected by the choice of bases, we know the eigenvalues of $T$ on the invariant space are those of $\tT_{11} + \tT_{12}Y_E$. Next, we recall a perturbation result for eigenvalues.

\begin{lemma}
	Assuming $\sigma(T_{11} + E_{11} + E_{12}Y_E) \subset \bbR$, we have (addition is set addition)
	\begin{align}
	\sigma(T_{11} + E_{11} + E_{12}Y_E) \subset \sigma(T_{11}) + \big[-\|E_{11} + E_{12}Y_E\|, \|E_{11} + E_{12}Y_E\| \big]. \label{eq:perturb1}
	\end{align}
	\label{lm:eigenoninvar}
\end{lemma}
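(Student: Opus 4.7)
The plan is to exploit the fact that $T_{11}=V_1^{*}TV_1$ is really a $K\times K$ matrix that is \emph{diagonal} in the standard basis of $\bbC^K$, with diagonal entries $\lambda_1,\dots,\lambda_K$. Indeed, since $\{f_i\}_{i=1}^K$ is orthonormal in $L^2(\cX,\bbP)$, we have $V_1^{*}V_1=I_K$, and $T_{11}e_i = V_1^{*}Tf_i = \lambda_i V_1^{*}f_i = \lambda_i e_i$. Once $T_{11}$ is seen to be diagonal, the perturbation statement reduces to a Bauer--Fike style bound for a normal matrix, where the condition number of the diagonalizing basis is exactly $1$, which is why the displayed inclusion has no condition-number factor in front of $\|E_{11}+E_{12}Y_E\|$.

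Given that, I would proceed as follows. Fix any $\mu\in\sigma(T_{11}+E_{11}+E_{12}Y_E)$. If $\mu\in\sigma(T_{11})$ there is nothing to prove, so assume $\mu\notin\sigma(T_{11})$; then $\mu I-T_{11}$ is invertible and one may factor
\[
\mu I - T_{11} - E_{11} - E_{12}Y_E \;=\; (\mu I-T_{11})\bigl(I-(\mu I-T_{11})^{-1}(E_{11}+E_{12}Y_E)\bigr).
\]
Singularity of the left-hand side together with invertibility of the first factor on the right forces the second factor to be singular, and hence (by the Neumann series criterion for invertibility of $I-X$) we must have $\|(\mu I-T_{11})^{-1}(E_{11}+E_{12}Y_E)\|\geq 1$, which gives
\[
\|(\mu I-T_{11})^{-1}\|\;\geq\;\frac{1}{\|E_{11}+E_{12}Y_E\|}.
\]

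Because $T_{11}$ is diagonal with real entries $\lambda_1,\dots,\lambda_K$, the resolvent $(\mu I-T_{11})^{-1}$ is also diagonal, and its operator norm equals $1/\min_{i\in[K]}|\mu-\lambda_i|$. Substituting this into the inequality above yields $\min_{i}|\mu-\lambda_i|\leq \|E_{11}+E_{12}Y_E\|$. The hypothesis $\sigma(T_{11}+E_{11}+E_{12}Y_E)\subset\bbR$ ensures $\mu$ is real, so $\mu-\lambda_i$ is a real number of absolute value at most $\|E_{11}+E_{12}Y_E\|$, placing $\mu$ in the real interval $[\lambda_i-\|E_{11}+E_{12}Y_E\|,\lambda_i+\|E_{11}+E_{12}Y_E\|]$; taking the union over $i$ gives the desired set inclusion.

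There is no real obstacle here: the one structural observation that does the work is that $T_{11}$ is diagonal in the basis dual to $V_1$, so this step is the conceptual heart of the lemma, while the rest is the standard resolvent-factorization argument. The role of the assumption $\sigma(T_{11}+E_{11}+E_{12}Y_E)\subset\bbR$ is purely to let us express the perturbation bound as an interval on the real line (as opposed to a disk in $\bbC$), matching the form in which it will be used when invoking the eigengap of $T$ in the subsequent step.
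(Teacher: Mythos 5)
Your proof is correct, and it reaches the conclusion by a genuinely different route from the paper. The paper's argument is a direct eigenvector computation: from $(T_{11}+E_{11}+E_{12}Y_E)v=\lambda v$ with $\|v\|_2=1$, it writes $(\lambda I-T_{11})v=(E_{11}+E_{12}Y_E)v$ and then uses the diagonal structure of $T_{11}$ to lower bound the left side pointwise, $\|(\lambda I-T_{11})v\|_2=\bigl(\sum_i(\lambda_i-\lambda)^2|v_i|^2\bigr)^{1/2}\geq\min_i|\lambda_i-\lambda|$, giving the bound in two lines. You instead run the standard resolvent-factorization (Bauer--Fike) argument: assume $\mu\notin\sigma(T_{11})$, factor out $\mu I-T_{11}$, deduce $\|(\mu I-T_{11})^{-1}(E_{11}+E_{12}Y_E)\|\geq 1$ from the Neumann criterion, then invoke diagonality to evaluate the resolvent norm as $1/\min_i|\mu-\lambda_i|$. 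Both proofs hinge on the same structural fact (that $T_{11}$ is diagonal with the $\lambda_i$ on the diagonal), but they deploy it at different points. The paper's computation is slightly shorter and entirely self-contained; your version is the textbook Bauer--Fike skeleton, which has the virtue of making clear why no condition-number factor appears (the diagonalizing basis is the standard one, so the factor is $1$) and would generalize without change if $T_{11}$ were merely normal rather than diagonal. One small stylistic note: you invoke the realness hypothesis only to turn a complex disk into a real interval at the very end, which is exactly how the paper uses it as well; the core spectral bound $\min_i|\mu-\lambda_i|\leq\|E_{11}+E_{12}Y_E\|$ holds in $\bbC$ without it.
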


\begin{proof}
	First note that $T_{11} = \text{diag}\big( (\lambda_1,\lambda_2,\ldots, \lambda_K)^T \big)$. Suppose $\lambda$ is a real eigenvalue of $T_{11} + E_{11} + E_{12}Y_E \in \bbC^{K \times K}$, then there exists $v \in \bbC^K$ with $\|v\|_2=1$ such that $(T_{11} + E_{11} + E_{12}Y_E)v = \lambda v$. It thus follows
	\[
	\inf_{i \in [K]} |\lambda_i - \lambda| \leq \Big(\sum_{i=1}^K (\lambda_i - \lambda)^2 |v_i|^2 \Big)^{1/2} = \|(\lambda I - T_{11}) v\| = \| (E_{11} + E_{12}Y_E)v \| \leq \| E_{11} + E_{12}Y_E\|,
	\]
	which suggests $\lambda$ is within $\| E_{11} + E_{12}Y_E\|$ from at least one of $\{\lambda_i\}_{i=1}^K$. This is equivalent to the claim of \eqref{eq:perturb1}.
\end{proof}

From the lemma, we know (assuming $\|Y_E\|_{HS} < 1 $)
\begin{equation}
\sigma(T_{11} + E_{11} + E_{12}Y_E) \subset \big[\lambda_K - \|E_{11}\|_{HS} - \|E_{12}\|_{HS},  \lambda_1 + \|E_{11}\|_{HS} + \|E_{12}\|_{HS} \big].
\label{eq:NKsubspace-eigenbound}
\end{equation}

For the third point, we need the following result from \citet{rosasco2010Learninga} (their Theorem 20), which they credited to \citet{anselone1971} for the origin.

\begin{theorem}
	Let $A\in\cL(\cH)$ be a compact operator. Given a finite set $\Lambda$ of non-zero eigenvalues of $A$, let $\Gamma$ be any simple rectifiable closed curve (having positive direction) with $\Lambda$ inside and $\sigma(A) \setminus \Lambda$ outside. Let $P$ be the spectral projection associated with $\Lambda$, that is,
	\begin{equation}
	P = \frac{1}{2\pi i} \int_{\Gamma} (\lambda I - A)^{-1} d \lambda,
	\end{equation}
	and define
	\begin{equation}
	\eta^{-1} = \sup_{\lambda \in \Gamma} \| (\lambda I - A)^{-1} \|_{op}.
	\label{eq:def-eta}
	\end{equation}
	Let $B$ be another compact operator such that
	\begin{equation}
	\| B - A \|_{op} \leq \frac{\eta ^2}{\eta + l(\Gamma)/2\pi} < \eta,
	\end{equation}
	where $l(\Gamma)$ is the length of $\Gamma$, the the following facts hold true.
	\begin{enumerate}
		\item The curve $\Gamma$ is a subset of the resolvent set of $B$ enclosing a finite set $\widehat{\Lambda}$ of non-zero eigenvalues of $B$;
		\item The dimension of the range of $P$ is equal to the dimension of the range of $\widehat{P}$, where $\widehat{P} = \frac{1}{2\pi i} \int_{\Gamma} (\lambda I - B)^{-1} d \lambda$.
	\end{enumerate}
	\label{thm:rosasco}
\end{theorem}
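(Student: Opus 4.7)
The plan is to run the classical Anselone-type perturbation argument for spectral projections of compact operators, in two pieces: (i) show $\Gamma \subset \rho(B)$ by inverting $\lambda I - B$ via a Neumann series, so that $\widehat P$ is well defined; (ii) bound $\|\widehat P - P\|_{op}$ by a contour-integral estimate and force it strictly below one, so that the classical projection lemma (close bounded idempotents have isomorphic ranges) yields $\dim R(P) = \dim R(\widehat P)$.

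For piece (i), I would write
\[
\lambda I - B = (\lambda I - A)\bigl(I - (\lambda I - A)^{-1}(B - A)\bigr),
\]
and observe that for every $\lambda \in \Gamma$,
\[
\bigl\|(\lambda I - A)^{-1}(B - A)\bigr\|_{op} \leq \eta^{-1}\|B - A\|_{op} < 1,
\]
since the hypothesis $\frac{\eta^2}{\eta + l(\Gamma)/(2\pi)} < \eta$ is equivalent to $l(\Gamma) > 0$ and already gives $\|B - A\|_{op} < \eta$. The Neumann series then inverts the second factor, so
\[
(\lambda I - B)^{-1} = \sum_{k=0}^{\infty} \bigl[(\lambda I - A)^{-1}(B - A)\bigr]^k (\lambda I - A)^{-1},
\]
proving $\Gamma \subset \rho(B)$ and making $\widehat P$ well defined. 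Finiteness of $\widehat\Lambda$ is a consequence of the compactness of $B$: the non-zero spectrum of a compact operator is countable with $0$ as the only possible accumulation point, and since $\Lambda$ consists of non-zero eigenvalues of $A$ one may take $\Gamma$ and its enclosed region to be bounded away from $0$, so only finitely many eigenvalues of $B$ can lie inside $\Gamma$.

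For piece (ii), subtracting the $k = 0$ term in the Neumann series above yields the second resolvent identity
\[
(\lambda I - B)^{-1} - (\lambda I - A)^{-1} = \sum_{k=1}^{\infty} \bigl[(\lambda I - A)^{-1}(B - A)\bigr]^k (\lambda I - A)^{-1}.
\]
Setting $\epsilon := \|B - A\|_{op}$ and summing geometrically, I obtain, uniformly in $\lambda \in \Gamma$,
\[
\bigl\|(\lambda I - B)^{-1} - (\lambda I - A)^{-1}\bigr\|_{op} \leq \frac{1}{\eta} \cdot \frac{\epsilon/\eta}{1 - \epsilon/\eta} = \frac{\epsilon}{\eta(\eta - \epsilon)}.
\]
Integrating along $\Gamma$ and dividing by $2\pi$ gives
\[
\|\widehat P - P\|_{op} \leq \frac{l(\Gamma)}{2\pi} \cdot \frac{\epsilon}{\eta(\eta - \epsilon)},
\]
and a direct algebraic rearrangement shows this right-hand side is at most one precisely when $\epsilon\bigl(\eta + l(\Gamma)/(2\pi)\bigr) \leq \eta^2$, which is the hypothesis. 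To secure the \emph{strict} inequality needed by the projection lemma, one may either slightly perturb $\Gamma$ outward inside $\rho(A) \cap \rho(B)$ or observe that the supremum defining $\eta$ is generically not attained. Concluding with the classical fact that two bounded projections at operator-norm distance less than one have the same rank (for instance via the Sz.-Nagy-type intertwiner $U := \widehat P P + (I - \widehat P)(I - P)$, which satisfies $U - I = (2\widehat P - I)(P - \widehat P)$ and is invertible whenever $\|\widehat P - P\|_{op}$ is sufficiently small) finishes the proof. The only delicate point is precisely this boundary case in the projection-distance estimate, which amounts to bookkeeping rather than a substantive obstacle.
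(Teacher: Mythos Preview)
The paper does not prove this theorem at all: it is quoted from Rosasco et al.\ (2010, their Theorem~20), who in turn credit Anselone (1971), and the paper simply invokes it as a black box in Step~3. Your argument is precisely the classical Anselone perturbation scheme underlying those references---Neumann series to put $\Gamma$ inside $\rho(B)$, the second-resolvent contour estimate to bound $\|\widehat P - P\|_{op}$, and the close-projections lemma to match ranks---so you have correctly reconstructed the source proof rather than diverged from it.

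The one soft spot is the boundary case you already flag: when $\epsilon = \eta^{2}/(\eta + l(\Gamma)/2\pi)$ exactly, your chain of inequalities gives only $\|\widehat P - P\|_{op} \le 1$, whereas the projection lemma needs strict inequality. Neither of your proposed patches is quite a proof as stated: perturbing $\Gamma$ changes both $\eta$ and $l(\Gamma)$ simultaneously in ways you would have to track, and ``the supremum is generically not attained'' is an observation rather than an argument. A cleaner route is to note that once $\Gamma\subset\rho(A)\cap\rho(B)$ is established, both $P$ and $\widehat P$ depend only on the homotopy class of $\Gamma$ in that open set, so you may replace $\Gamma$ by a nearby homotopic contour on which the pointwise resolvent bound is strictly below $\eta^{-1}$ on a set of positive arc length, forcing the integral estimate strictly below $1$. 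In any event this is, as you say, bookkeeping: the paper only ever applies the theorem with a strict margin (via the constant $C_1$), so the edge case is never actually exercised.
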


From the theorem above, we can take $\Gamma$ as in \eqref{eq:def-gamma}, i.e. as the boundary of the rectangle
\begin{equation}
\big\{ \lambda \in \bbC \;|\; \frac{\lambda_K + \lambda_{K+1}}{2} \leq re(\lambda) \leq \|T\|_{\cH \to \cH} +1, |im(\lambda)|\leq 1 \big\}.
\label{eq:def-gamma2}
\end{equation}
For $\|E\|_{op} \leq \|E\|_{HS}$ small enough, $\Gamma$ contains exactly the top $K$ eigenvalues of $\tT$. Combining the three points up, we obtain sufficient conditions for the range of $v_1 + V_2Y_E$ to be the leading invariant subspace of $\tT$.
\begin{proposition}
	Assuming 
	\begin{gather}
		\|Y_E\|_{HS} < \min\Big\{1, \frac{1}{C_\cH} \Big\} \label{eq:conditionYE} \\
		\|E_{11}\|_{HS} + \|E_{12}\|_{HS} < \min\Big\{ \frac{\lambda_K-\lambda_{K+1}}{2}, 1 \Big\}, \label{eq:conditionE11E12}\\
		\|E\|_{HS} \leq \min\Big\{ \frac{\eta ^2}{\eta + l(\Gamma)/2\pi}, 1 \Big\}\label{eq:conditionE},
	\end{gather}
	where $\Gamma,\eta$ are defined according to \eqref{eq:def-gamma2} and \eqref{eq:def-eta} respectively, the range of $V_1+V_2 Y_E$ is the leading invariant subspace of $\tT$.
	\label{lm:step3main}
\end{proposition}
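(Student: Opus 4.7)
The proposition is a consolidation of three facts that the preceding subsections have already prepared, each unlocked by one of the three hypotheses. The plan is to verify the hypothesis of Lemma~\ref{lm:kdimensional}, of Lemma~\ref{lm:eigenoninvar}, and of Theorem~\ref{thm:rosasco} in turn, then finish with a Riesz-projector dimension count to identify the invariant subspace from Proposition~\ref{lm:nk} as the leading one rather than some other $K$-dimensional invariant subspace of $\tT$.

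Condition \eqref{eq:conditionYE} immediately gives $C_\cH\|Y_E\|_{HS} < 1$, so Lemma~\ref{lm:kdimensional} says $W := \mathrm{range}(V_1 + V_2 Y_E)$ has dimension exactly $K$; the derivation in step one further tells us that $W$ is $\tT$-invariant and that $\tT|_W$ is represented, in the basis given by the columns of $V_1 + V_2 Y_E$, by the $K \times K$ matrix $\tT_{11} + \tT_{12} Y_E = T_{11} + E_{11} + E_{12} Y_E$ (using $T_{12} = T_{21} = 0$, implicit in the form of \eqref{eq:quad_eq} and reflecting that $\{f_i\}_{i>K}$ is chosen so that its span is $T$-invariant). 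Since $\tT$ is assumed to have only non-negative real eigenvalues, so does $\tT|_W$, so the hypothesis of Lemma~\ref{lm:eigenoninvar} is met, and \eqref{eq:NKsubspace-eigenbound} traps $\sigma(\tT|_W)$ inside $[\lambda_K - \epsilon,\,\lambda_1 + \epsilon]$ with $\epsilon := \|E_{11}\|_{HS} + \|E_{12}\|_{HS}$. Condition \eqref{eq:conditionE11E12} forces both $\epsilon < (\lambda_K - \lambda_{K+1})/2$ and $\epsilon < 1$, so this interval sits strictly inside $[(\lambda_K + \lambda_{K+1})/2,\,\|T\|_{\cH\to\cH} + 1]$, i.e., in the real cross-section of the rectangle $\Gamma$ from \eqref{eq:def-gamma2}. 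Now invoke Theorem~\ref{thm:rosasco} with $A = T$, $B = \tT$, and this $\Gamma$, whose interior encloses precisely the top-$K$ eigenvalues of $T$: condition \eqref{eq:conditionE} supplies $\|E\|_{op} \leq \|E\|_{HS} \leq \eta^2/(\eta + l(\Gamma)/2\pi)$, so the theorem produces a spectral projector $\widehat{P}$ of $\tT$ whose range has dimension $K$ and consists of the direct sum of the $\tT$-eigenspaces for exactly $K$ eigenvalues of $\tT$ that lie inside $\Gamma$.

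To close, a standard Riesz-functional-calculus argument shows $W \subset \mathrm{range}(\widehat{P})$: the resolvent $(\lambda I - \tT)^{-1}$ preserves $W$ for $\lambda \in \Gamma$, and because we have just shown that $\Gamma$ encloses \emph{all} of $\sigma(\tT|_W)$, the contour integral $\frac{1}{2\pi i}\int_\Gamma (\lambda I - \tT)^{-1}\,d\lambda$ restricted to $W$ acts as the identity, giving $\widehat{P}|_W = I_W$. Matching dimensions yields $W = \mathrm{range}(\widehat{P})$. Finally, since $\tT$'s eigenvalues are non-negative reals bounded by $\|\tT\|_{\cH\to\cH} \leq \|T\|_{\cH\to\cH} + 1$ and exactly $K$ of them have real part at least $(\lambda_K + \lambda_{K+1})/2$, these $K$ eigenvalues must be the top $K$, identifying $W$ as the leading $K$-dimensional invariant subspace of $\tT$. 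The main obstacle is exactly this last dimension-count: a priori the subspace $W$ from Proposition~\ref{lm:nk} could be some non-leading invariant subspace that happens to have $K$ eigenvalues inside $\Gamma$, and it is only the identity $\widehat{P}|_W = I_W$—which in turn requires $\Gamma$ to enclose \emph{all} of $\sigma(\tT|_W)$ rather than merely some of it—that rules this out. This is why the eigenvalue-localization of step two plays such a pivotal role: it does more than bound perturbation error, it guarantees the correct identification of the invariant subspace.
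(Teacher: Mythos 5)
Your proof follows the same three-step structure as the paper's (verify Lemma~\ref{lm:kdimensional} via \eqref{eq:conditionYE}, localize $\sigma(\tT|_W)$ via Lemma~\ref{lm:eigenoninvar} and \eqref{eq:conditionE11E12}, and apply Theorem~\ref{thm:rosasco} via \eqref{eq:conditionE}), and is correct. The only difference is that you explicitly supply the Riesz-projector dimension-count argument that the paper compresses into ``it thus follows,'' which is a welcome expansion but not a different route.
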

\begin{proof}
	First, our choice of $\Gamma$ contains and only contains the top $K$ eigenvalues of $T$. Next, since we assumed $\tT$ also has only real eigenvalues, Lemma \ref{lm:eigenoninvar} applies. So from \eqref{eq:NKsubspace-eigenbound}, \eqref{eq:conditionYE}, and \eqref{eq:conditionE11E12}, we know
	\begin{equation*}
	\sigma(T_{11} + E_{11} + E_{12}Y_E) \subset \big( \frac{\lambda_K + \lambda_{K+1}}{2},  \lambda_1 + 1 \big),
	\end{equation*}
	which is enclosed in $\Gamma$. We also see from \eqref{eq:conditionYE} that Lemma \ref{lm:kdimensional} applies. Finally, condition \eqref{eq:conditionE} on $E$ ensures Theorem \ref{thm:rosasco} applies, so $\tT$ has only $K$ eigenvalues in $\Gamma$. It thus follows the invariant subspace induced by $V_1+V_2 Y_E$ is the $K$-dimensional leading invariant subspace of $\tT$.
\end{proof}

\subsection{Step four: bound uniform consistency error}
In this step, we bound the uniform consistency error
\[
\inf\{\|V_1 - \tV_{1}Q\|_{2\to\infty}:Q\in\bbU^K\},
\]
where $\tV_1$ has orthonormal columns spanning the leading invariant subspace of $\tT$. Since we require $\tV_1^* \tV_1 = I$, we need to orthonormalize the ``columns'' of $V_1 + V_2 Y_E$. Let us define $Y_E^*$ to be the adjoint of $Y_E$ with respect to $\bbC^K$ and $l^2$. We can verify that for some $Q\in\bbU^K$, $\tV_1Q = (V_1 + V_2 Y_E)(I + Y_E^*Y_E)^{-1/2}$ because\footnote{As we will see from Lemma \ref{lm:yprops}, $(I + Y_E^*Y_E)^{-1/2}$ is well-defined when $\|Y_E\|_{HS}$ is small.} 
\[
(V_1 + V_2 Y_E)^*(V_1 + V_2 Y_E) = V_1^*V_1 + Y_E^*V_2^*V_2Y_E = I + Y_E^*Y_E.
\]

Meanwhile, note that by assumption, $\|\cdot\|_{\cH}$ is stronger than $\|\cdot\|_{\infty}$, i.e. $C_\cH \|f\|_{\cH} \geq \|f\|_{\infty}$ for all $ f \in \cH$. This implies for $\forall l \in \tl^2$
\[
C_\cH \|l\|_{\tl^2} = C_\cH \|V_2 l \|_{\cH} \geq \|V_2 l \|_{\infty} \geq \|V_2 l \|_{L^2} = \| l \|_{l^2}.
\]
The consequence of this is 
\begin{equation}
C_\cH \|Y_E\|_{2 \to \tl^2} \geq \|Y_E\|_{2 \to l^2}.
\label{eq:convertynorm}
\end{equation}

We also need the following handy result.
\begin{lemma}
	Let $Y: \bbC^K \rightarrow l_{2}$ have operator norm $\|Y\|_{2 \to l^2} < 1$. Then
	\begin{equation}
	\big\|(I + Y^*Y)^{-\frac{1}{2}} \big\|_2 \leq 1, \quad \big\|I - (I + Y^*Y)^{-\frac{1}{2}} \big\|_2 \leq \|Y\|_{2 \to l^2}.
	\end{equation}
	\label{lm:yprops}
\end{lemma}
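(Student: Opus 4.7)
The plan is to diagonalize $Y^*Y$ and reduce both bounds to elementary scalar inequalities via the functional calculus. Although $Y$ maps into the infinite-dimensional $l^2$, the composition $Y^*Y$ acts on the finite-dimensional space $\bbC^K$, so it is simply a positive semidefinite $K\times K$ matrix. Write its spectral decomposition $Y^*Y = U\Lambda U^*$ with $U \in \bbU^K$ and $\Lambda = \mathrm{diag}(\sigma_1^2,\ldots,\sigma_K^2)$, where $\sigma_1,\ldots,\sigma_K$ are the singular values of $Y$. By the variational characterization of the operator norm, $\max_i \sigma_i = \|Y\|_{2\to l^2} < 1$.

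The functional calculus then gives, for any continuous $\phi$ on $[1,\infty)$,
\[
\phi(I+Y^*Y) = U\,\mathrm{diag}\big(\phi(1+\sigma_i^2)\big)\,U^*, \qquad \|\phi(I+Y^*Y)\|_2 = \max_i |\phi(1+\sigma_i^2)|.
\]
For the first bound, apply this with $\phi(t) = t^{-1/2}$, which is decreasing in $t$, so its maximum over the set $\{1+\sigma_i^2\} \subset [1,\infty)$ is attained at $t=1$ (i.e.\ at $\sigma_i=0$) with value $1$; this immediately yields $\|(I+Y^*Y)^{-1/2}\|_2 \leq 1$.

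For the second bound, apply the same calculus with $\phi(t) = 1 - t^{-1/2}$, which is nonnegative and increasing on $[1,\infty)$, so its maximum is attained at $t = 1 + \|Y\|_{2\to l^2}^2$. It therefore suffices to verify the scalar inequality
\[
1 - (1+s^2)^{-1/2} \leq s \qquad \text{for all } s \in [0,1),
\]
where $s = \|Y\|_{2\to l^2}$. This reduces to $\sqrt{1+s^2} - 1 \leq s\sqrt{1+s^2}$, which in turn follows at once from $\sqrt{1+s^2} \leq 1+s$ (verified by squaring both sides, giving $0 \leq 2s$).

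There is no substantive obstacle; the whole lemma is a finite-dimensional spectral computation, and the only mild subtlety is the scalar inequality above, which is standard. Note that the hypothesis $\|Y\|_{2\to l^2} < 1$ is not actually needed for the first estimate but is convenient (and necessary) for the second to be sharp in its stated form.
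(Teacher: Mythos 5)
Your proof is correct and follows essentially the same route as the paper's: both diagonalize the Hermitian matrix $I + Y^*Y$, localize its spectrum in $[1, 1+\|Y\|^2]$, and reduce the two operator-norm bounds to the scalar inequality $1 - (1+s^2)^{-1/2} \leq s$ for $s \in [0,1)$. Your write-up is somewhat more explicit (naming the spectral decomposition and functional calculus), but the substance is identical.
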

\begin{proof}
	Suppose $\|Y\|_{2 \to l^2}= r < 1$. Note that $\| Y^*Y \|=r^2 < 1$, so the Hermitian matrix $I+Y^*Y$ is invertible with spectrum in $[1, 1+r^2]$. Consequently, $\sigma\big((I + Y^*Y)^{-\frac{1}{2}} \big) \subset [\frac{1}{\sqrt{1+r^2}}, 1]$, so $\big\|(I + Y^*Y^{-\frac{1}{2}}) \big\| \leq 1$.
	
	Similarly, we have $\sigma(I - (I + Y^*Y)^{-\frac{1}{2}}) \subset [0, 1 - \frac{1}{\sqrt{1+r^2}}]$. It remains to verify $1 - \frac{1}{\sqrt{1+r^2}} \leq r$, which is easy.
\end{proof}

Now we have
\begin{proposition}
	Suppose $\| Y_E \|_{2\to \tl^2 } < 1/C_\cH$ and  $\tV_1Q = (V_1 + V_2 Y_E)(I + Y_E^*Y_E)^{-1/2}$ for some $Q\in\bbU^K$. We have
	\[
	\inf\{\|V_1 - \tV_{1}Q\|_{2\to\infty}:Q\in\bbO^K\} \leq C_\cH (\|V_1\|_{2\to\infty} +1) \| Y_E \|_{2\to \tl^2 }.
	\]
	\label{prop:uniferror}
\end{proposition}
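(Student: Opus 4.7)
The plan is to pick $Q \in \bbU^K$ so that $\tV_1 Q = (V_1 + V_2 Y_E)(I + Y_E^* Y_E)^{-1/2}$ (such a $Q$ exists by the assumption of the proposition), and then directly bound the $2\to\infty$ norm of the difference. The natural decomposition is
\[
V_1 - \tV_1 Q = V_1\bigl[I - (I + Y_E^* Y_E)^{-1/2}\bigr] - V_2 Y_E (I + Y_E^* Y_E)^{-1/2},
\]
obtained by inserting the factor $(I + Y_E^* Y_E)^{-1/2}(I + Y_E^* Y_E)^{1/2}$ into $V_1$ and subtracting. By the triangle inequality, it suffices to bound the $\|\cdot\|_{2\to\infty}$ norm of each piece separately.

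For the first piece I would use the submultiplicativity $\|V_1 M\|_{2\to\infty} \leq \|V_1\|_{2\to\infty}\|M\|_{2\to 2}$ with $M = I - (I + Y_E^* Y_E)^{-1/2}$, then invoke Lemma \ref{lm:yprops} to obtain $\|M\|_{2\to 2} \leq \|Y_E\|_{2\to l^2}$. Note that Lemma \ref{lm:yprops} requires $\|Y_E\|_{2\to l^2} < 1$, so one must first promote the hypothesis $\|Y_E\|_{2\to\tl^2} < 1/C_\cH$ via the inequality $\|Y_E\|_{2\to l^2} \leq C_\cH \|Y_E\|_{2\to\tl^2}$ from \eqref{eq:convertynorm}. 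The same inequality then converts the bound to $\|V_1\|_{2\to\infty} \cdot C_\cH \|Y_E\|_{2\to\tl^2}$.

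For the second piece I would fix a unit $\alpha \in \bbC^K$, set $\beta := (I + Y_E^* Y_E)^{-1/2}\alpha$ (which satisfies $\|\beta\|_2 \leq 1$ by Lemma \ref{lm:yprops} again), and estimate
\[
\|V_2 Y_E \beta\|_\infty \leq C_\cH \|V_2 Y_E \beta\|_\cH = C_\cH \|Y_E \beta\|_{\tl^2} \leq C_\cH \|Y_E\|_{2\to \tl^2},
\]
where the first step uses the hypothesis that $\|\cdot\|_\cH$ dominates $\|\cdot\|_\infty$ with constant $C_\cH$, the equality uses that $V_2$ is an isometry from $\tl^2$ onto its range by construction of the $\tl^2$-norm (Lemma \ref{lm:induced_hilbert}, item 2), and the final step is the definition of operator norm.

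Summing the two contributions yields $C_\cH(\|V_1\|_{2\to\infty} + 1)\|Y_E\|_{2\to\tl^2}$, which is the claimed bound on the infimum. There is no substantive obstacle here; the argument is a clean triangle-inequality computation, and the only subtlety worth watching is bookkeeping: tracking three distinct norms ($\|\cdot\|_{2\to\infty}$, $\|\cdot\|_{2\to l^2}$, and $\|\cdot\|_{2\to\tl^2}$) and correctly invoking the conversion \eqref{eq:convertynorm} to ensure Lemma \ref{lm:yprops} is applicable under the stated hypothesis $\|Y_E\|_{2\to\tl^2} < 1/C_\cH$.
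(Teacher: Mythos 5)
Your proposal is correct and follows essentially the same route as the paper: the identical decomposition of $V_1 - \tV_1 Q$ into a $V_1[I-(I+Y_E^*Y_E)^{-1/2}]$ piece and a $V_2 Y_E(I+Y_E^*Y_E)^{-1/2}$ piece, bounded respectively via Lemma \ref{lm:yprops} together with the norm conversion \eqref{eq:convertynorm} and via the $\cH$-dominates-$\|\cdot\|_\infty$ hypothesis combined with the isometry of $V_2$ on $\tl^2$. The only cosmetic difference is that you invoke \eqref{eq:convertynorm} before applying Lemma \ref{lm:yprops}, whereas the paper applies it at the very end when collecting terms; the underlying estimates are the same.
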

\begin{proof}
The condition $\| Y_E \|_{2\to \tl^2 } < 1/C_\cH$ ensures Lemma \ref{lm:yprops} applies. We then directly calculate
\begin{align*}
&\quad \inf \{\|V_1 - \tV_1 Q\|_{2\to\infty}:Q\in\bbO^K\} \\
&\leq \big\| V_1 - (V_1+V_2 Y_E) (I + Y_E^*Y_E)^{-\frac{1}{2}} \big\|_{2\to \infty} \\
&\leq \big\| V_1( I- (I + Y_E^*Y_E)^{-\frac{1}{2}}) \big\|_{2\to \infty} + \big\| V_2 Y_E (I + Y_E^*Y_E)^{-\frac{1}{2}} \big\|_{2\to \infty} \\
&\leq \| V_1 \|_{2\to\infty} \|Y_E\|_{2 \to l^2} + \| V_2 Y_E \|_{2\to \infty} \\
&\leq \|V_1\|_{2\to\infty} \|Y_E\|_{2 \to l^2} + C_\cH \| Y_E \|_{2\to \tl^2 } \\
&\leq C_\cH (\|V_1\|_{2\to\infty} +1) \| Y_E \|_{2\to \tl^2 }.
\end{align*}
\end{proof}  

\subsection{Step five: put all pieces together} We combine the previous steps together and prove Theorem \ref{thm:main-thm}. To this end, we need the following lemma that relates $\|E_{ij}\|_{HS}$, $i,j \in \{1,2\}$ to $\|E\|_{HS}$.
\begin{lemma}
	Let 
	\[
	C_3 = \max \Big \{C_\cH, 1 + C_\cH \|V_1\|_{2 \to \cH}, \|V_1\|_{2 \to \cH}(1 + C_\cH \|V_1\|_{2 \to \cH})  \Big \},
	\] 
	then for any $i,j \in \{1,2\}$
	\[
	\|E_{ij}\|_{HS} \leq C_3 \|E\|_{HS}.
	\]
	\label{lm:EijandE}
\end{lemma}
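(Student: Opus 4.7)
The plan is to decompose each $E_{ij}=V_i^*EV_j$ as a triple product and apply the Hilbert--Schmidt/operator norm compatibility $\|AB\|_{HS}\le \|A\|_{op}\|B\|_{HS}$ (and its mirror) twice. The needed operator norms for $V_1,V_2$ and their $L^2$-adjoints come from Lemma~\ref{lm:induced_hilbert}, with one mild sharpening: the stated bound $\|V_1^*\|_{\cH\to 2}\le C_\cH\sqrt{K}$ is loose by a factor of $\sqrt{K}$ and must be tightened to $C_\cH$ in order for the four resulting constants to collapse to the compact form of $C_3$.

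For that sharpening, since $\{f_i\}_{i=1}^K$ is $L^2$-orthonormal, Bessel gives $\|V_1^*f\|_2^2=\sum_{i=1}^K|\langle f,f_i\rangle_{L^2}|^2\le \|f\|_{L^2}^2$; combined with the continuous embedding $\|f\|_{L^2}\le \|f\|_\infty\le C_\cH\|f\|_\cH$ (valid because $\bbP$ is a probability measure), this yields $\|V_1^*\|_{\cH\to 2}\le C_\cH$. Running the same style of argument through the $L^2$-orthogonal identity $V_2V_2^*=I-V_1V_1^*$ on $\cH$ yields $\|V_2^*f\|_{\tl^2}=\|V_2V_2^*f\|_\cH\le \|f\|_\cH+\|V_1\|_{2\to\cH}\|V_1^*f\|_2\le (1+C_\cH\|V_1\|_{2\to\cH})\|f\|_\cH$. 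The remaining norms $\|V_1\|_{2\to\cH}$ and $\|V_2\|_{\tl^2\to\cH}=1$ are taken directly from Lemma~\ref{lm:induced_hilbert}, the latter being precisely the statement that the $\cH$-induced inner product makes $V_2$ an isometric embedding.

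Applying the sandwich $\|V_i^*EV_j\|_{HS}\le \|V_i^*\|_{op}\,\|E\|_{HS}\,\|V_j\|_{op}$ in each of the four cases (the HS/op submultiplicativity applies regardless of which inner product structure we put on $\tl^2$, as long as we use it consistently) produces
\begin{align*}
\|E_{11}\|_{HS}&\le C_\cH\|V_1\|_{2\to\cH}\|E\|_{HS},\\
\|E_{12}\|_{HS}&\le C_\cH\|E\|_{HS},\\
\|E_{21}\|_{HS}&\le \|V_1\|_{2\to\cH}(1+C_\cH\|V_1\|_{2\to\cH})\|E\|_{HS},\\
\|E_{22}\|_{HS}&\le (1+C_\cH\|V_1\|_{2\to\cH})\|E\|_{HS}.
\end{align*}
Three of the four constants are literally entries of the $\max$ defining $C_3$. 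For the $E_{11}$ bound a short case split finishes the proof: if $\|V_1\|_{2\to\cH}\le 1$ then $C_\cH\|V_1\|_{2\to\cH}\le C_\cH\le C_3$, while if $\|V_1\|_{2\to\cH}>1$ then $C_\cH\|V_1\|_{2\to\cH}\le C_\cH\|V_1\|_{2\to\cH}^2\le \|V_1\|_{2\to\cH}(1+C_\cH\|V_1\|_{2\to\cH})\le C_3$. The only genuine obstacle is the Bessel sharpening above; once that is in place, the rest is pure bookkeeping with submultiplicativity and the operator norms of Lemma~\ref{lm:induced_hilbert}.
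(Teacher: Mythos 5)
Your proof follows the same sandwich decomposition and HS/operator-norm submultiplicativity as the paper, but you have put your finger on a real slip: the paper's two-line proof instructs the reader to ``plug in the bounds from item~3 of Lemma~\ref{lm:induced_hilbert},'' yet those bounds give $\|V_1^*\|_{\cH\to 2}\le C_\cH\sqrt K$ (proved there by Cauchy--Schwarz) and $\|V_2^*\|\le 1+C_\cH K\max_i\|f_i\|_\cH$, and substituting them literally produces constants with $\sqrt K$ and $K$ factors that do \emph{not} collapse to the stated $C_3$ whenever $K>1$. Your Bessel sharpening $\|V_1^*f\|_2^2 = \sum_{i=1}^K|\langle f,f_i\rangle_{L^2}|^2\le\|f\|_{L^2}^2\le C_\cH^2\|f\|_\cH^2$ is exactly the missing ingredient: it replaces the Cauchy--Schwarz step in the paper's Lemma~\ref{lm:induced_hilbert} proof and yields $\|V_1^*\|_{\cH\to 2}\le C_\cH$ and, via the identity $\|V_2^*f\|_{\tl^2}=\|(I-V_1V_1^*)f\|_\cH$, the $K$-free bound $\|V_2^*\|\le 1+C_\cH\|V_1\|_{2\to\cH}$. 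With these in place the four sandwich bounds and your short case split for $E_{11}$ (whether $\|V_1\|_{2\to\cH}\le 1$ or $>1$) are all correct and give precisely the advertised $C_3$. So the argument is right, and it is a modest but genuine improvement over what is written in the paper, since the paper's appeal to the looser norms in Lemma~\ref{lm:induced_hilbert} does not actually close the proof as stated.
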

\begin{proof}
	Note the fact 
	\[
	\|V_{i}^* E V_j \|_{HS} \leq \|V_i^*\|_{op} \|E\|_{HS} \|V_j\|_{op}.
	\]
	We plug in the bounds from item 3 in Lemma \ref{lm:induced_hilbert} to obtain the stated result.
\end{proof}

\begin{proof}[Proof of Theorem \ref{thm:main-thm}]
Define $C_1$ as
\[
C_1 := \frac{1}{C_3} \min \Big\{ \frac{\lambda_{K}-\lambda_{K+1}}{8}, \frac{1}{2}, \frac{\delta}{4}, \frac{\delta}{4 C_\cH}, \frac{C_3\eta ^2}{\eta + l(\Gamma)/2\pi} \Big\}.
\]
We have $\|E\|_{HS} \leq C_1$ by assumption. By Lemma \ref{lm:EijandE}, this assumption implies 
\[
\|E_{ij}\|_{HS} \leq C_3 \|E\|_{HS} <  \frac{\delta}{4}
\]
for all $i,j \in \{1,2\}$. Thus
\begin{gather*}
	s_E = \delta - \|E_{11}\|_{HS} - \|E_{22}\|_{HS} \geq \frac{\delta}{2},\\
	\frac{\| E_{21} \|_{HS}   \| E_{12} \|_{HS}}{s_E^2} < \frac{1}{4}.
\end{gather*}
Proposition \ref{lm:nk} guarantees \eqref{eq:quad_eq} has a solution $Y_E$ and
\begin{equation}
\|Y_E\|_{HS} \leq \frac{4C_3}{\delta} \|E\|_{HS} < \min\{\frac{1}{C_\cH}, 1\}.
\label{eq:quad_eq_sol}
\end{equation}
We check that our choice of $C_1$ satisfies condition \eqref{eq:conditionE} and \eqref{eq:conditionE11E12} so Proposition \ref{lm:step3main} implies the invariant space from Proposition \ref{lm:nk} is the leading invariant space. Finally, \eqref{eq:quad_eq_sol} implies the conditions of Proposition \ref{prop:uniferror} are satisfied so we have 
\[
\inf \{\|V_1 - \tV_1 Q\|_{2\to\infty}:Q\in\bbU ^K\} \leq C_2 \|E\|_{HS}
\]
where $C_2 = 4C_3C_\cH (\|V_1\|_{2\to\infty} +1)/\delta$.

\end{proof}

\section{Application to normalized spectral clustering}
\label{sec:spectralClustering}

In spectral clustering, we start from a subset  $\cX \subset \bbR^p$, a probability measure $\bbP$ on $\cX$,\footnote{Assume the underlying $\sigma$-algebra of $\bbP$ is the Lebesgue $\sigma$-algebra.} and a continuous symmetric positive definite real-valued kernel function $k:\cX\times\cX\to\bbR$. After observing samples $X_1,\dots,X_n\overset{\iid}{\sim} \bbP$, we construct matrix $K_n\in\bbR^{n\times n}$ of their pairwise similarities: $K_n = \begin{bmatrix}\frac1nk(X_i,X_j)\end{bmatrix}_{i,j = 1}^n$, and then normalize it to obtain the {\it normalized Laplacian matrix}
\[
L_n = D_n^{-\frac12}K_nD_n^{-\frac12},
\]
where $d_n = K_n1_n$ and $D_n = \diag(d_n)$ is the degree matrix.\footnote{The normalized Laplacian matrix is usually defined as $I_n - L_n$, but the eigenvectors of $L_n$ and those of $I_n - L_n$ are identical and it is more convenient to study $L_n$.} It is possible to show that $L_n$ is symmetric and semi-positive definite, so it has an eigenvalue decomposition. We denote the eigenpairs of $L_n$ by $(\hlambda_k, v_k)$ and sort the eigenvalues in descending order: 
\[
\hlambda_1\ge\dots\ge\hlambda_n \ge 0.
\]
In this paper, we normalize the eigenvectors of $L_n$ so that $n^{-\frac12}\|v_k\|_2 = 1$. The spectral embedding matrix is $V\in\bbR^{n\times K}$ whose columns are $v_1,\dots,v_K$. 

Suppose for now that the kernel $k$ is bounded away from $0$ by a positive number and bounded from above. The operator counterpart of $L_n$ is the following operator, which can be shown to be a bounded linear operator in $\cL(C_b(\cX))$
\begin{equation}
(\hT_nf)(x) = \int_{\cX}\frac{k(x,y)}{d_n(x)^{1/2}d_n(y)^{1/2}}f(y)dP_n(y),
\label{eq:def-hT_n}
\end{equation}
where $d_n(x) = \int_{\cX}k(x,y)dP_n(y)$ is the sample degree function. Although $\hT_n$ is introduced as an operator in $\cL(C_b(\cX))$, we remark that the definitive element for $\hT_n$ is the integral form and the domain space and range space need not be restricted to $C_b(\cX)$. In fact, the actual $\hT_n$ we shall work with is an operator between Hilbert spaces; $\cL(C_b(\cX))$ is only chosen here for the ease of understanding. The same remark also applies to other operators we shall subsequently define. 

The operator $\hT_n$ is the operator counterpart of $L_n$ because $\rho_n\circ\hT_n = L_n\circ\rho_n$, where $\rho_n:C_b(\cX)\to \bbC^n$ is the restriction operator defined as
\[
\rho_nf = \begin{bmatrix}f(X_1) & \dots & f(X_n)\end{bmatrix}^T.
\] 
In other words, if we identify functions $f\in C_b(\cX)$ with vectors $v\in\bbC^n$ by the restriction operator $\rho_n$, $\hT_n$ ``behaves as'' $L_n$. The eigenvalues and eigenvectors(functions) of $\hT_n$ and $L_n$ are also closely related in the following sense.

\begin{lemma}
	\label{lm:spectralEquivalence}
	Suppose real-valued kernel function $k(x,y)$ is continuous and bounded from below and above: $0 < \kappa_l < k(x,y) < \kappa_u < \infty$. Let $\hT_n$ be defined as in \eqref{eq:def-hT_n} where the domain space and range space are both $C_b(\cX)$. If $(\hlambda,f)$ is a non-trivial eigenpair of $\hT_n$ (\ie\ $\hlambda \ne 0$), then $(\hlambda,\rho_nf)$ is an eigenpair of $L_n$. Conversely, if $(\hlambda,v)$ is an eigenpair of $L_n$, then $(\hlambda,\hf)$, where
	\begin{equation}
	\hf(x) = \frac{1}{\hlambda n}\sum_{i=1}^n\frac{k(x,X_i)}{d_n(x)^{1/2}d_n(X_i)^{1/2}}v_i,
	\label{eq:nystrom-ext}
	\end{equation}
	is an eigenpair of $\hT_n$ with $\hf \in C_b(\cX)$. Moreover, this choice of $\hf$ is such that $\|\hf\|_{L^2(\cX, \bbP_n)} = 1$ and the restriction of $\hf$ onto sample points agrees with $v$, i.e. $\rho_n \hf = v$.
\end{lemma}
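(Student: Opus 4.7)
My plan is to exploit the fact that integration against the empirical measure $\bbP_n$ collapses to a finite average over the sample points, so every instance of $\hT_n$ mirrors the action of the matrix $L_n$ on restrictions. A preliminary observation I will use throughout is that, because $k(x,y) \ge \kappa_l > 0$, the sample degree satisfies $d_n(x) = \int k(x,y)\,d\bbP_n(y) \ge \kappa_l$ uniformly in $x$; hence $d_n(x)^{-1/2}$ is well-defined, continuous, and uniformly bounded, which rules out any division-by-zero concerns and will later deliver continuity and boundedness of $\hf$.

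For the forward direction, suppose $(\hlambda, f)$ is a nontrivial eigenpair of $\hT_n$. I would substitute $x = X_j$ into the eigenrelation $\hlambda f(x) = (\hT_n f)(x)$. Since $\bbP_n$ puts mass $1/n$ at each $X_i$, the right-hand side equals exactly the $j$-th entry of $L_n \rho_n f$, yielding $L_n \rho_n f = \hlambda \rho_n f$. To upgrade this to a genuine eigenpair, I need $\rho_n f \ne 0$; but the integral expression for $\hT_n f$ depends on $f$ only through its values on $\{X_i\}$, so $\rho_n f = 0$ would force $\hT_n f \equiv 0$ on all of $\cX$ and therefore $\hlambda f \equiv 0$, contradicting $\hlambda \ne 0$ and $f \ne 0$.

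For the converse, given $L_n v = \hlambda v$ with $\hlambda \ne 0$, I would take $\hf$ as in \eqref{eq:nystrom-ext} and verify the required properties in the order $\hf \in C_b(\cX)$, $\rho_n \hf = v$, $\hT_n \hf = \hlambda \hf$, and $\|\hf\|_{L^2(\cX,\bbP_n)} = 1$. Continuity and boundedness of $\hf$ follow from continuity of $k$ together with uniform upper and lower bounds on each summand $k(x,X_i)/\bigl(d_n(x)^{1/2} d_n(X_i)^{1/2}\bigr)$. Substituting $x = X_j$ in the definition of $\hf$ identifies the sum as $\hlambda^{-1}(L_n v)_j = v_j$, so $\rho_n \hf = v$. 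The eigenrelation $\hT_n \hf = \hlambda \hf$ then falls out by rewriting $(\hT_n \hf)(x)$ as an empirical sum and using $\hf(X_j) = v_j$, which reassembles the defining formula for $\hlambda \hf(x)$. Finally, the normalization follows from $\|\hf\|_{L^2(\cX,\bbP_n)}^2 = n^{-1}\sum_i |\hf(X_i)|^2 = n^{-1}\|v\|_2^2 = 1$ using the standing convention $n^{-1/2}\|v\|_2 = 1$.

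The entire argument is essentially bookkeeping, with no genuine technical obstacle once the lower bound on $d_n$ is in place. The only subtlety worth flagging is that $\hlambda \ne 0$ must be used consistently: in the forward direction to rule out $\rho_n f = 0$, and in the converse direction so that the Nyström formula is well-defined in the first place.
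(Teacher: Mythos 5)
Your proof is correct and follows essentially the same direct-verification approach as the paper: substitute sample points into the integral form, exploit that $\bbP_n$ collapses integrals to averages, and chain the identities. One small improvement worth noting: in the forward direction you explicitly rule out $\rho_n f = 0$ (using $\hlambda \neq 0$ and the fact that $\hT_n f$ depends on $f$ only through its sample values), a detail the paper's proof silently passes over, and you also verify the normalization $\|\hf\|_{L^2(\cX,\bbP_n)}=1$ and $\rho_n\hf=v$ which the paper asserts without proof.
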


\begin{proof}
	Let $(\hlambda,\hf)$ be an eigenpair of $\hT_n$: $\hT_n\hf = \hlambda\hf$. We check that $(\hlambda,\rho_n\hf)$ is an eigenpair of $L_n$:
	\[
	L_n\rho_n\hf = \rho_n\hT_n\hf = \rho_n\hlambda\hf = \hlambda\rho_n\hf.
	\]
	Conversely, if $(\hlambda,v)$ is an eigenpair of $L_n$, we check that $(\hlambda,\hf)$ is an eigenpair of $\hT_n$:
	\[
	\begin{aligned}
	(\hT_n\hf)(x) &= \frac1n\sum_{i=1}^n\frac{k(x,X_i)}{d_n(x)^{1/2}d_n(X_i)^{1/2}}\left\{\textstyle\frac{1}{\hlambda n}\sum_{j=1}^n\frac{k(X_i,X_j)}{d_n(X_i)^{1/2}d_n(X_j)^{1/2}}v_j\right\} \\
	&= \frac1n\sum_{i=1}^n\frac{k(x,X_i)}{d_n(x)^{1/2}d_n(X_i)^{1/2}}\left\{\textstyle\frac{1}{\hlambda}[L_nv]_i\right\} \\
	&= \frac1n\sum_{j=1}^n\frac{k(x,X_j)}{d_n(x)^{1/2}d_n(X_j)^{1/2}}v_i \\
	&= (\hlambda\hf)(x).
	\end{aligned}
	\]
	It remains to check $\hf$ is indeed in $C_b(\cX)$. To this end, note that since the kernel function $k$ is continuous and bounded from above, we know $k(x,X_j) \in C_b(\cX)$. Since $k$ is bounded from below, we know $d_n(x)$ is continuous and $d_n(x) > \kappa_l$, so $k(x,X_j)/(d_n(x)^{1/2}d_n(X_i)^{1/2}) \in C_b(\cX)$. Thus the average of such terms $\hf$ is also in $C_b(\cX)$.
\end{proof}

The population version of $\hT_n$ is the {\it normalized Laplacian operator} 
\[
T:C_b(\cX)\to C_b(\cX)\text{ defined as }(Tf)(x) = \int_{\cX}\frac{k(x,y)}{d(x)^{1/2}d(y)^{1/2}}f(y)dP(y),
\]
where $d(x) = \int_{\cX}k(x,y)dP(y)$ is the (population) degree function. Under appropriate assumptions, it can be shown that we can choose $\{f_i\}_{i=1}^K$, the top $K$ eigenfunctions of $T$, to be real-valued and orthonormal in $L^2(\cX, P)$. We can thus define $V_1:\bbC^K \to C_b(X)$ as $V_1 \alpha = \sum_{i=1}^K \alpha_i f_i$. We can similarly define $\hV_1$ with $\{\hf_k\}_{k=1}^K$, the extension of top $K$ eigenvectors of $L_n$ according to \eqref{eq:nystrom-ext}. Our goal in this section is to apply our general theory to prove the following result.
\begin{theorem}
	Under the general assumptions defined below, there exists $C_4,C_5$ that are determined by $\cX, \bbP, k$ such that whenever sample size $n \geq C_4 \tau$ for some $\tau >1$, we have with confidence $1-8e^{-\tau}$
	\[
	\inf \{\|V_1 - \hV_1 Q\|_{2\to\infty}:Q\in\bbU^K\} \leq C_5 \frac{\sqrt{\tau}}{\sqrt{n}}.
	\]
	\label{thm:spec-consis}
\end{theorem}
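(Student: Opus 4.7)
The strategy is to invoke Theorem \ref{thm:main-thm} with $\tT = \hT_n$. All the heavy lifting is then reduced to (i) choosing a Hilbert space $\cH$ satisfying the preamble to Theorem \ref{thm:main-thm}, (ii) verifying $T, \hT_n \in HS(\cH)$, and (iii) establishing a high-probability Hilbert--Schmidt bound of the form $\|\hT_n - T\|_{HS} \lesssim \sqrt{\tau/n}$. Given (iii), the hypothesis $\|\hT_n - T\|_{HS} \leq C_1$ of Theorem \ref{thm:main-thm} translates directly into the requirement $n \geq C_4 \tau$ for a suitable $C_4$, and the conclusion of Theorem \ref{thm:main-thm} then yields the target bound with $C_5$ proportional to $C_2$ times the concentration constant from (iii).

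For (i)--(ii), I would take $\cH$ to be a Sobolev space $W^{s,2}(\cX)$ with $s > p/2$, so that the Sobolev embedding $\cH \hookrightarrow C_b(\cX)$ supplies the required constant $C_\cH$. Under the stated smoothness and positivity assumptions on $k$, the bivariate function $(x,y) \mapsto k(x,y)/\sqrt{d(x)d(y)}$ inherits the Sobolev regularity of $k$ (since $d \geq \kappa_l > 0$), and hence lies in $W^{s,2}(\cX \times \cX)$; this makes $T$ Hilbert--Schmidt on $\cH$. A similar argument, combined with the fact that $d_n \geq \kappa_l/2$ on a high-probability event, handles $\hT_n$.

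The main technical step is (iii). I would split
\[
\hT_n - T = (T_n - T) + (\hT_n - T_n),
\]
where the intermediate operator $T_n$ uses the population degree $d$ but the empirical measure $\bbP_n$, namely $(T_n f)(x) = \int k(x,y)/\sqrt{d(x)d(y)}\, f(y)\, dP_n(y)$. The first piece is the empirical mean of i.i.d.\ centered random operators $A_i - \bbE A_i$ in $HS(\cH)$, where each $A_i$ is a rank-one operator built from the function $k(\cdot, X_i)/\sqrt{d(\cdot)d(X_i)}$ paired with point evaluation at $X_i$ (bounded on $\cH$ by $C_\cH$). Since these summands are uniformly bounded in HS norm, a Pinelis--Bernstein-type inequality for Hilbert space-valued random variables gives $\|T_n - T\|_{HS} \leq C\sqrt{\tau/n}$ with probability at least $1 - 2e^{-\tau}$. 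The second piece is handled by Taylor-expanding $t \mapsto 1/\sqrt{t}$ about $d$ and reducing the HS norm of $\hT_n - T_n$ to a multiple of $\|d_n - d\|_\cH$, which concentrates at the same $\sqrt{\tau/n}$ rate by another Hilbert-space Bernstein bound applied to the i.i.d.\ random functions $x \mapsto k(x, X_i) - d(x)$ (viewed as elements of $\cH$).

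The principal obstacle is (iii): obtaining HS-norm (rather than operator-norm) concentration requires each summand to have uniformly bounded HS norm and finite HS-variance \emph{in $\cH$}, a property that hinges delicately on the Sobolev regularity of $k$ and on the choice of $s > p/2$. Assembling the two sub-bounds via a union bound delivers $\|\hT_n - T\|_{HS} \leq C\sqrt{\tau/n}$ with probability at least $1 - 8e^{-\tau}$ (matching the stated confidence budget, which absorbs an additional event for $d_n \geq \kappa_l/2$), and the claimed inequality then follows by applying Theorem \ref{thm:main-thm} on this event.
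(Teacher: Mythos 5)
There is a genuine gap in your proposal: Theorem~\ref{thm:main-thm} produces a bound on $\inf_Q\|V_1 - \tV_1 Q\|_{2\to\infty}$ where $\tV_1$ has columns \emph{orthonormal in} $L^2(\cX,\bbP)$ spanning the leading invariant subspace of $\hT_n$. But the theorem you are trying to prove concerns $\hV_1$, whose columns are the Nystr\"om extensions $\hf_{n,i}$ from \eqref{eq:nystrom-ext}, and those are only orthonormal in $L^2(\cX,\bbP_n)$, not $L^2(\cX,\bbP)$. Invoking Theorem~\ref{thm:main-thm} with $\tT=\hT_n$ therefore does not directly yield a bound in terms of $\hV_1$; an additional step is required to control $\|W - \hV_1 Q\|_{2\to\infty}$, where $W=(V_1+V_2Y_E)(I+Y_E^*Y_E)^{-1/2}$ is the $L^2(\cX,\bbP)$-orthonormal basis built by the general theory. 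The paper handles this in a dedicated step (Lemma~\ref{lm:bound-unitary} combined with Lemma~\ref{lm:ulln}): a Procrustes-type argument shows that $\|Q - W^*\hV_1\|_2$ is controlled by the quantity $\sup_{\|g\|_{\cH^s}\le 1}|P_n|g|^2 - P|g|^2|$, which is then bounded at the $\sqrt{\tau/n}$ rate via a Dudley/covering-number argument in the Sobolev ball. Without some such step, the conclusion does not follow. Relatedly, your account of the $8e^{-\tau}$ confidence budget is off: it arises as a union bound over the $4e^{-\tau}$ event from the $(d,K)$ concentration (Proposition~\ref{prop:T-concen}) and the $4e^{-\tau}$ event from the uniform LLN (Lemma~\ref{lm:ulln}), not from any event controlling $d_n$ --- indeed $d_n(x) \ge \kappa_l$ holds deterministically since $k > \kappa_l$ pointwise and $\bbP_n$ is a probability measure.

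On the parts you do carry out, the overall scaffold is right and your decomposition of the perturbation differs from the paper's in a benign way: you insert an intermediate operator $T_n$ (population degree $d$, empirical measure $\bbP_n$) and Taylor-expand $t\mapsto t^{-1/2}$, whereas the paper factors $T = D^{-1/2}KD^{-1/2}$ and uses the algebraic identity $D^{-1/2}-D_n^{-1/2} = D_n^{-1/2}(D_n-D)(D_n^{1/2}+D^{1/2})^{-1}D^{-1/2}$ together with the Sobolev multiplication bound \eqref{eq:burenkov}. Both routes reduce matters to $\|d-d_n\|$ and $\|K-K_n\|_{HS}$ concentration, so either would serve; but note that to control multiplication operators in $\cH^s$ the paper measures $\|d - d_n\|$ in a smoother norm ($\cH^{p+2}$), a point your Taylor-expansion sketch would also need to address carefully.
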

 The general assumptions referred to in Theorem \ref{thm:spec-consis} are
\begin{ntheorem}
	The set $\cX$ is a bounded connected open set in $\bbR^p$ with a nice boundary.\footnote{We need the boundary to be quasi-resolved \cite{burenkov1998} for inequality \eqref{eq:burenkov} and $C^{\infty}$ for lemma \ref{lm:coveringnumber} \cite{edmunds1996, cucker2002}. We also need $\cX$ to satisfy the cone condition \cite{burenkov1998} We omit the definitions of these conditions because the precise definitions are very technical and not relevant to the main story of the paper.}. The probability measure $\bbP$ is defined with respect to Lebesgue measure and admits a density function $p(x)$. Moreover, there exists constants $0<p_l < p_u<\infty$ such that $p_l < p(x) < p_u$ almost surely with respect to the Lebesgue measure. The kernel $k(\cdot,\cdot) \in C_b^{p+2}(\cX \times \cX)$ is symmetric, positive, and there exists constants $0<\kappa_l < \kappa_u<\infty$ such that $\kappa_l < k(x,y) < \kappa_u $ for $\forall x,y \in \cX$. Treated as an operator from $L^2(\cX, \bbP)$ to $L^2(\cX, \bbP)$, the eigenvalues of $T$ satisfy $\lambda_1 \geq \ldots \geq \lambda_K > \lambda_{K+1} \geq \ldots \geq 0$. The top $K$ eigenfunctions of $T$, $\{f_i\}_{i=1}^K \subset C_b^{p+2}(\cX)$. \footnote{Function space $C_b^{p+2}(\cX)$ shall be defined in section \ref{sec:sob}.}
\end{ntheorem}

\subsection{Overview of the proof}
The most challenging parts in applying the general theory are to identify the correct Hilbert space $\cH$ to work with, and to show that $T-\hT_n$, as an operator from $\cH$ to $\cH$, has Hilbert-Schmidt norm tending to zero as $n$ goes to infinity. It turns out under the general assumptions, we may set $\cH$ to be a Sobolev space of sufficiently high degrees. As for bounding $\|T-\hT_n\|_{HS}$, we first decompose $T,\hT_n$ as the product of three operators. Let us define  
\begin{gather}
	D^{-1/2}:\cH\to \cH\text{ as }(D^{-1/2}f)(x) = f(x)/\sqrt{d(x)}, \\
	K:\cH\to \cH\text{ as }(Kf)(x) = \int_{\cX}k(x,y)f(y)dP(y).
	\label{eq:def-ofK}
\end{gather}
Then $T=D^{-1/2} K D^{-1/2}$. Similarly, we have $\hT_n = D_n^{-1/2} K_n D_n^{-1/2}$ where $D_n^{-1/2},K_n$ are the sample level version of $D^{-1/2}$ and $K$ defined using $d_n$ and $\bbP_n$. We shall establish the concentration of $K_n$ to $K$ and $D_n^{-1/2}$ to $D^{-1/2}$ and invoke triangular inequality to bound $\|T-\hT_n\|_{HS}$.

Despite that the general theory does all the heavy lifting, there is one additional step we must take to finish the full proof of Theorem \ref{thm:spec-consis}. In our general theory, $\tV_1$ has columns orthonormal in $L^2(\cX,\bbP)$ that span the leading invariant space of $\tT$. In theorem \ref{thm:spec-consis} however, the same leading invariant space is spanned by the columns of $\hV_1$, which are only orthonormal in $L^2(\cX,\bbP_n)$. Morally speaking, when $n$ is large, $\hV_1$ and $\tV_1$ are roughly the same up to some unitary transformation, so switching from $\tV_1$ to $\hV_1$ should not inflate the consistency error by any order of magnitude. The exact error bound shall be obtained through some uniform law of large numbers.

The rigorous treatment shall be presented in five parts. In part one, we introduce the Sobolev space we work with and lay out its basic properties. In part two, we bound the norm of operator differences such as $\| D_n^{-1/2} -D^{-1/2} \|_{\cH \to \cH}$ and $\|K - K_n \|_{HS}$ and express $\|T-\hT_n\|_{HS}$ in terms of them. In part three, we invoke concentration results in Hilbert spaces and relate the norm of operator differences to sample size. In part four, we check the remaining conditions required by our general theory and combine all previous pieces together. In part five, we deal with the error induced by the difference of $\hV_1$ and $\tV_1$ and complete the proof.

\subsection{Part one: The Sobolev space $\cH^s$}
\label{sec:sob}
First recall that by assumption $\cX$ is a bounded connected open subset of $\bbR^p$ with a nice boundary. Given $s \in \bbN$, the Sobolev space $\cH^s=\cH^s(\cX)$ of order $s$ is defined as
\[
\cH^s\;:=\; \{ f \in L^2(\cX, dx) \;|\; D^\alpha f \in L^2(\cX, dx), \forall |\alpha|=s \},
\]
where $D^\alpha f$ is the (weak) derivative of $f$ with respect to the multi-index $\alpha$ and $L^2(\cX, dx)$ is the complex Hilbert space of complex-valued functions square integrable under Lebesgue measure. The space $\cH^s$ is a separable Hilbert space with respect to the inner product
\[
\ip{f,g}_{\cH^s} = \ip{f,g}_{L^2(\cX, dx)} + \sum_{|\alpha|=s} \ip{D^\alpha f,D^\alpha g}_{L^2(\cX, dx)}.
\]

Let $C_b^s(\cX)$ be the set of complex-valued continuous bounded functions such that all the derivatives up to order $s$ exist and are continuous bounded functions. The space $C_b^s(\cX)$ is a Banach space with respect to the norm
\[
\|f\|_{C_b^s} = \|f\|_\infty + \sum_{|\alpha|=s} \| D^\alpha f \|_\infty.
\]

Since $\cX$ is bounded, we know $C_b^s(\cX) \subset \cH^s$ and $\|f\|_{\cH^s} \leq C_{s} \|f\|_{C_b^s}$ where $C_s$ is a constant only depending on $s$. We also know from the Sobolev embedding theorem (see Chapter 4.6 of \citet{burenkov1998}) that for $l,m \in \bbN$ with $l-m > p/2$, we have 
\begin{equation}
\cH^l \subset C_b^m(\cX) \qquad \|f\|_{C_b^m} \leq C_{m,l} \|f \|_{\cH^l}
\label{eq:sobo_embedding}
\end{equation}
where $C_{m,l}$ is a constant depending only on $m$ and $l$.

Taking $l=s=\floor{p/2} + 1$ and $m=0$, we see
\[
C_b^s(\cX) \subset \cH^s \subset C_b(\cX),
\]
with $ \|f\|_\infty \leq C_6 \|f \|_{\cH^s}$ for $\forall f \in \cH^s$ for some constant $C_6$. This norm relationship suggests that $\cH^s$ is a RKHS with a bounded kernel $s(\cdot,\cdot)$.

\subsection{Part two: bounds on operator differences}
Similar to \eqref{eq:def-ofK}, we define multiplication operators
\begin{gather}
D^{1/2}:\cH^s\to \cH^s\text{ as }(D^{1/2}f)(x) = \sqrt{d(x)}f(x), \\
D:\cH^s\to \cH^s\text{ as }(Df)(x) = d(x)f(x).
\end{gather}
In this subsection, we show $D^{1/2}, D^{-1/2}, D, D_n^{1/2}, D_n^{-1/2}, D_n \in \cL(\cH^s)$ and their operator norms are appropriately bounded, that is
\begin{lemma}
	Under the general assumptions, all the following operators are bounded linear operators in $\cL(\cH^s)$, and there exists a suitable constant $C_7 > 0$ such that
	\begin{gather}
	\|D^{1/2}\|_{\cH^s \to \cH^s},\|D^{-1/2}\|_{\cH^s\to\cH^s},\|D_n^{1/2}\|_{\cH^s\to\cH^s},\|D_n^{-1/2}\|_{\cH^s\to\cH^s} \leq C_7 \\
	\|(D^{1/2}+D_n^{1/2})^{-1}\|_{\cH^s\to\cH^s} \leq C_7, \quad \|D-D_n\|_{\cH^s\to\cH^s} \leq  C_7 \|d-d_n\|_{\cH^{p+2}}
	\end{gather}
	\label{lm:boundonD}
\end{lemma}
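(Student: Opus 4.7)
The plan is to reduce everything to a single multiplier estimate on $\cH^s$. Every operator in the statement is multiplication by a scalar function on $\cX$---namely $\sqrt{d}$, $1/\sqrt{d}$, their sample analogues, $1/(\sqrt{d}+\sqrt{d_n})$, and $d-d_n$---so my first step is to prove the auxiliary fact that if $g \in C_b^s(\cX)$ then the multiplication operator $M_g : f \mapsto gf$ is bounded on $\cH^s$ with
\[
\|M_g\|_{\cH^s \to \cH^s} \;\leq\; C_s \|g\|_{C_b^s}
\]
for a constant $C_s$ depending only on $s$. I would obtain this by applying the Leibniz rule $D^\alpha(gf) = \sum_{\beta \leq \alpha}\binom{\alpha}{\beta} D^\beta g \cdot D^{\alpha - \beta} f$ to each multi-index $\alpha$ with $|\alpha| = s$ and bounding each summand in $L^2$ by $\|D^\beta g\|_\infty \cdot \|D^{\alpha-\beta} f\|_{L^2}$. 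For this to close, I need $\|f\|_{\cH^s}$ to control every intermediate-order derivative $\|D^\gamma f\|_{L^2}$ with $|\gamma| \leq s$, which is standard on domains with quasi-resolved boundary.

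With the multiplier lemma in hand, I would next verify that the five multipliers in question all lie in $C_b^s(\cX)$ with norm bounded uniformly in $n$ and in the sample. Differentiating under the integral in $d(x) = \int k(x,y)dP(y)$ and using $k \in C_b^{p+2}(\cX \times \cX)$ gives $\|d\|_{C_b^{p+2}(\cX)} \leq \|k\|_{C_b^{p+2}(\cX \times \cX)}$ deterministically, and the identical bound holds for $d_n(x) = \int k(x,y) dP_n(y)$ since $P_n$ is also a probability measure. Because $d$ and $d_n$ both take values in the compact interval $[\kappa_l, \kappa_u]$ with $\kappa_l > 0$, the maps $t \mapsto \sqrt{t}$ and $t \mapsto 1/\sqrt{t}$ are smooth on a neighborhood of this range, so by Fa\`a di Bruno the compositions $\sqrt{d}, 1/\sqrt{d}, \sqrt{d_n}, 1/\sqrt{d_n}$ all belong to $C_b^{p+2} \subset C_b^s$ with $C_b^s$-norm depending only on $\kappa_l, \kappa_u$ and $\|k\|_{C_b^{p+2}(\cX \times \cX)}$. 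Since $\sqrt{d} + \sqrt{d_n} \geq 2\sqrt{\kappa_l}$, the same argument covers $(\sqrt{d}+\sqrt{d_n})^{-1}$. Plugging into the multiplier lemma yields a uniform operator norm bound for all five operators bounded by $C_7$, and incidentally for $D$ and $D_n$ themselves.

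For the stated difference bound, I would apply the multiplier lemma one more time to $g = d - d_n$ to conclude $\|D - D_n\|_{\cH^s \to \cH^s} \leq C_s \|d - d_n\|_{C_b^s}$, then invoke the Sobolev embedding \eqref{eq:sobo_embedding}. With $s = \floor{p/2} + 1$ we have $(p+2) - s \geq p/2 + 1 > p/2$, so the embedding $\cH^{p+2} \subset C_b^s$ applies and $\|d - d_n\|_{C_b^s} \leq C_{s,p+2}\|d - d_n\|_{\cH^{p+2}}$. Absorbing all the constants into a single $C_7$ gives the stated estimate.

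The main obstacle is really the multiplier lemma: although the Leibniz calculation itself is routine, closing it requires control of intermediate-order derivatives by the top-order Sobolev norm, which in turn relies on the regularity of the boundary assumed in the general setup. Once that is in place, the remaining pieces---differentiation under an integral, smooth composition on a compact interval bounded away from zero, and the Sobolev embedding already cited---are standard and fit together with only bookkeeping of constants.
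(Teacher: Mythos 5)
Your proof takes essentially the same route as the paper's: bound $\sqrt{d},\,1/\sqrt{d},\,\sqrt{d_n},\,1/\sqrt{d_n},\,(\sqrt{d}+\sqrt{d_n})^{-1}$ in $C_b^{p+2}(\cX)\subset C_b^s(\cX)$ via the kernel's $C_b^{p+2}$ regularity and the uniform bounds $\kappa_l < d, d_n < \kappa_u$, apply a multiplier estimate on $\cH^s$, and invoke the Sobolev embedding $\cH^{p+2}\hookrightarrow C_b^s$ for the difference term. The one distinction is that the paper outsources the multiplier estimate $\|gf\|_{\cH^s}\le\|g\|_{C_b^s}\|f\|_{\cH^s}$ to Lemma 15, Chapter 4 of Burenkov (1998), whereas you propose to re-derive it by Leibniz plus control of intermediate-order derivatives by the reduced $\cH^s$ norm. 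Your own caveat is the right one: with the norm $\|f\|_{\cH^s}^2=\|f\|_{L^2}^2+\sum_{|\alpha|=s}\|D^\alpha f\|_{L^2}^2$ used in the paper, the Leibniz sum produces $\|D^\gamma f\|_{L^2}$ for $0<|\gamma|<s$, and closing this requires the equivalence of the reduced and full Sobolev norms, which holds under the cone condition the paper already assumes. So your argument is correct and self-contained, at the cost of a constant $C_s$ in place of Burenkov's constant $1$; the remaining steps match the paper exactly.
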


\begin{proof}
	Let $C_8=\|k\|_{C_b^{p+2}(\cX \times \cX)}$. For any $x \in \cX$, clearly $k_x:=k(\cdot, x) \in C_b^{p+2}(\cX)$ with $\|k_x \|_{C_b^{p+2}} \leq C_8$. Since $d$ and $d_n$ are some weighted average of average of $k_x$, it follows
	\[
	\|d \|_{C_b^{p+2}},  \|d_n \|_{C_b^{p+2}} \leq C_8. 
	\]
	Since $d,d_n$ inherit the $\kappa_l,\kappa_u$ pointwise bound from $k(\cdot,\cdot)$, we know $d^{1/2},d_n^{1/2},d^{-1/2},d_n^{-1/2} \in C_b^{p+2}(\cX)$ with
	\[
	\|d^{1/2} \|_{C_b^{p+2}},  \|d_n^{1/2} \|_{C_b^{p+2}}, \|d^{-1/2} \|_{C_b^{p+2}},  \|d_n^{-1/2} \|_{C_b^{p+2}} \leq C_9.
	\]
	Next, we know from Lemma 15 of Chapter 4 of \citet{burenkov1998} that for $g \in C_b^{s}(\cX)$ and $f \in \cH^s$, we have $gf \in \cH^s$ and
	\begin{equation}
	\|gf\|_{\cH^s} \leq \|g\|_{C_b^s} \|f\|_{\cH^s}.
	\label{eq:burenkov}
	\end{equation}
	We can use this inequality to prove $D^{1/2}, D^{-1/2}, D, D_n^{1/2}, D_n^{-1/2}, D_n \in \cL(\cH^s)$ and bound their operator norm. For example, plugging in $g=d^{1/2},d_n^{1/2},d^{-1/2},d_n^{-1/2}$ into \eqref{eq:burenkov}, and noticing for these choices of $g$, $\|g\|_{C_b^s} \leq \|g\|_{C_b^{p+2}}$ because $p+2>s$, we conclude
	\[
	\|D^{1/2}\|,\|D^{-1/2}\|,\|D_n^{1/2}\|,\|D_n^{-1/2}\| \leq C_9.
	\]
	Note that by the embedding theorem, $\cH^{p+2}$ can be embedded into $C_b^s(\cX)$, so plugging in $d-d_n \in C_b^{p+2}(\cX) \subset \cH^{p+2}$, we see
	\[
	\|D-D_n\| \leq \|d-d_n\|_{C_b^s} \leq C_{10} \|d-d_n\|_{\cH^{p+2}}.
	\]
	For the bound on $\|(D^{1/2}+D_n^{1/2})^{-1}\|$, we follow essentially the same route. We first bound $\|d^{1/2}+d_n^{1/2} \|_{C_b^{p+2}}$, then argue $d^{1/2}+d_n^{1/2}$ has pointwise lower and upper bound. It then follows that $\|(d^{1/2}+d_n^{1/2})^{-1} \|_{C_b^{p+2}} \leq C_{11}$, and we see via \eqref{eq:burenkov} that $\|(D^{1/2}+D_n^{1/2})^{-1}\|_{\cH^s,\cH^s} \leq C_{9}$. Taking $C_7$ as the maximum of $C_9$ to $C_{11}$ completes the proof.
\end{proof}
\begin{lemma}
	Under the general assumptions, we have
	\[
	\|T-\hT_n\|_{HS} \leq C_{12} \Big( \big(  \|K_n\|_{HS} + \|K\|_{HS} \big) \|d-d_n\|_{\cH^{p+2}} + \|K-K_n\|_{HS} \Big).
	\]
	\label{lm:decompTTn}
\end{lemma}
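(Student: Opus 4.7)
The plan is to apply a standard telescoping decomposition to the product $T - \hT_n$ and then invoke Lemma \ref{lm:boundonD} together with the submultiplicativity of the Hilbert--Schmidt norm with respect to the operator norm. Specifically, since $T = D^{-1/2} K D^{-1/2}$ and $\hT_n = D_n^{-1/2} K_n D_n^{-1/2}$, I would first write
\[
T - \hT_n = (D^{-1/2} - D_n^{-1/2}) K D^{-1/2} + D_n^{-1/2} (K - K_n) D^{-1/2} + D_n^{-1/2} K_n (D^{-1/2} - D_n^{-1/2}),
\]
and then apply the triangle inequality together with the identities $\|AB\|_{HS}\leq \|A\|\,\|B\|_{HS}$ and $\|BA\|_{HS}\leq \|B\|_{HS}\,\|A\|$ to obtain
\[
\|T-\hT_n\|_{HS} \leq \|D^{-1/2} - D_n^{-1/2}\|\,\|K\|_{HS}\,\|D^{-1/2}\| + \|D_n^{-1/2}\|\,\|K-K_n\|_{HS}\,\|D^{-1/2}\| + \|D_n^{-1/2}\|\,\|K_n\|_{HS}\,\|D^{-1/2} - D_n^{-1/2}\|,
\]
where all operator norms are taken in $\cL(\cH^s)$.

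The key remaining piece is a bound on $\|D^{-1/2} - D_n^{-1/2}\|_{\cH^s\to\cH^s}$ in terms of $\|d-d_n\|_{\cH^{p+2}}$. Since all these multiplication operators commute pointwise and since $d, d_n$ are uniformly bounded away from zero by the general assumptions, I would verify the elementary pointwise identity
\[
\frac{1}{\sqrt{d(x)}} - \frac{1}{\sqrt{d_n(x)}} = \frac{d_n(x) - d(x)}{\sqrt{d(x)}\sqrt{d_n(x)}\bigl(\sqrt{d(x)} + \sqrt{d_n(x)}\bigr)},
\]
which lifts to the operator identity
\[
D^{-1/2} - D_n^{-1/2} = D^{-1/2} D_n^{-1/2} (D_n - D) (D^{1/2} + D_n^{1/2})^{-1}.
\]
Taking $\cH^s\to\cH^s$ operator norms on both sides and invoking Lemma \ref{lm:boundonD} for each of the four factors then yields $\|D^{-1/2} - D_n^{-1/2}\|_{\cH^s \to \cH^s} \leq C_7^4 \|d-d_n\|_{\cH^{p+2}}$.

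Finally, I would substitute this estimate back into the triangle inequality above and use Lemma \ref{lm:boundonD} once more to bound $\|D^{-1/2}\|$ and $\|D_n^{-1/2}\|$ by $C_7$; absorbing all powers of $C_7$ into a single constant $C_{12}$ gives the claimed inequality. There is no real obstacle here: the only non-routine ingredient is the algebraic identity for $D^{-1/2} - D_n^{-1/2}$, whose validity depends essentially on the pointwise positivity of $d$ and $d_n$ guaranteed by the general assumptions. Everything else reduces to mechanical application of the triangle inequality, the Hilbert--Schmidt/operator norm compatibility, and the uniform bounds already established in Lemma \ref{lm:boundonD}.
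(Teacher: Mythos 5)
Your proof is correct and follows essentially the same route as the paper's: a three-term telescoping decomposition of $T-\hT_n$ (yours telescopes in a slightly different order but gives identical factors and constants), the same commutation identity expressing $D^{-1/2}-D_n^{-1/2}$ as $D^{-1/2}D_n^{-1/2}(D_n-D)(D^{1/2}+D_n^{1/2})^{-1}$, and the same appeal to Lemma \ref{lm:boundonD} together with $\|AB\|_{HS}\le\|A\|\,\|B\|_{HS}$ and $\|BA\|_{HS}\le\|B\|_{HS}\,\|A\|$. No gaps; the reasoning and the resulting powers of $C_7$ match the paper's.
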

\begin{proof}
	First note
	\begin{align*}
	&\quad D^{-1/2} - D_n^{-1/2} \\
	&=D_n^{-1/2}(D_n^{1/2} - D^{1/2})D^{-1/2}\\
	&=D_n^{-1/2}(D_n - D)(D_n^{1/2} + D^{1/2})^{-1}D^{-1/2}.
	\end{align*}
	Applying the bounds from Lemma \ref{lm:boundonD}, we see 
	\[
	\|D^{-1/2} - D_n^{-1/2}\| \leq C_7^4 \|d-d_n\|_{\cH^{p+2}}.
	\]
	We also have decomposition
	\begin{align*}
	&\quad D^{-1/2}KD^{-1/2} - D_n^{-1/2}K_nD_n^{-1/2} \\
	& = D^{-1/2}K(D^{-1/2} - D_n^{-1/2}) + (D^{-1/2}K - D_n^{-1/2}K_n)D_n^{-1/2} \\
	& = D^{-1/2}K(D^{-1/2} - D_n^{-1/2}) + D^{-1/2}(K-K_n)D_n^{-1/2} + (D^{-1/2} - D_n^{-1/2})K_nD_n^{-1/2}.	
	\end{align*}
	Taking Hilbert-Schmidt norm on both sides, we have\footnote{We haven't shown $K,K_n$ are Hilbert-Schmidt operators from $H^s$ to $H^s$ yet. This is shown in Lemma \ref{lm:Kd-concentration}}
	\begin{align*}
	& \quad \|T-\hT_n\|_{HS} \\
	& \leq C_7^5 \|K\|_{HS} \|d-d_n\|_{\cH^{p+2}} + C_7^2 \|K-K_n\|_{HS} + C_7^5 \|K_n\|_{HS} \|d-d_n\|_{\cH^{p+2}} \\
	& \leq C_{12} \Big( \big(  \|K_n\|_{HS} + \|K\|_{HS} \big)\|d-d_n\|_{\cH^{p+2}} + \|K-K_n\|_{HS} \Big)
	\end{align*}
	for some appropriately chosen $C_{12}$.
\end{proof}

\subsection{Concentration on Hilbert Space}
In this subsection, we show $K,K_n$ are both Hilbert-Schmidt operator from $\cH^s$ to $\cH^s$ and establish some concentration results regarding $\|d-d_n\|_{\cH^{p+2}}$ and $\|K-K_n\|_{HS}$. With these results and Lemma \ref{lm:decompTTn}, we will be able to bound $\|T-\hT_n\|_{HS}$. The required concentration bounds are obtained through the following Theorem on the concentration in (complex) Hilbert space (see section 2.4 of \citet{rosasco2010Learninga}).

\begin{lemma}
	Let $\xi_1,\ldots,\xi_n$ be zero mean independent random variables with values in a separable (complex) Hilbert space $\cH$ such that $\| \xi_i \|_\cH \leq C$ for all $i \in [n]$. Then with probability at least $1-2e^{-\tau}$, we have
	\[
	\big\| \frac{1}{n}\sum_{i=1}^n \xi_i \big\|_\cH \leq \frac{C \sqrt{2\tau}}{\sqrt{n}}.
	\]
	\label{lm:Hilbert-concentration}
\end{lemma}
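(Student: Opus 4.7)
The plan is to invoke a Hilbert-space-valued Hoeffding-type inequality (often attributed to Yurinskii or Pinelis). The argument combines a Chernoff bound with an exponential moment estimate that exploits the Hilbert structure of $\cH$.

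First I would set $S_n := \sum_{i=1}^n \xi_i$, so that the stated inequality is equivalent to $\|S_n\|_\cH \leq C\sqrt{2n\tau}$. For $t>0$ and $\lambda>0$, the Chernoff-Markov bound gives $\bbP(\|S_n\|_\cH \geq t) \leq e^{-\lambda t}\,\mathbb{E}[e^{\lambda\|S_n\|_\cH}]$, and the key technical estimate I would establish (or quote) is the sub-Gaussian moment bound
\[
\mathbb{E}\!\left[\exp(\lambda\|S_n\|_\cH)\right] \;\leq\; 2\exp\!\left(\tfrac{1}{2}\,n\lambda^2 C^2\right).
\]
Combining this with the Chernoff bound and optimizing over $\lambda$ yields the sub-Gaussian tail
\[
\bbP(\|S_n\|_\cH \geq t) \;\leq\; 2\exp\!\left(-\tfrac{t^2}{2nC^2}\right).
\]
Setting $t := C\sqrt{2n\tau}$ gives $\bbP(\|S_n\|_\cH \geq C\sqrt{2n\tau}) \leq 2e^{-\tau}$, and dividing inside the event by $n$ recovers the claimed bound.

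The main obstacle is the Hilbert-space exponential moment estimate itself. A self-contained derivation proceeds by induction on $n$: conditioning on $\sigma(\xi_1,\dots,\xi_{k-1})$ and expanding $\|S_k\|_\cH^2 = \|S_{k-1}\|_\cH^2 + 2\,\operatorname{Re}\langle S_{k-1},\xi_k\rangle_\cH + \|\xi_k\|_\cH^2$, the zero-mean property of $\xi_k$ together with $\|\xi_k\|_\cH \leq C$ allows one to reproduce the scalar Hoeffding step, the inner-product expansion being the Hilbert-space feature that makes this argument go through. More abstractly, one could appeal to Pinelis' inequality for martingales in $2$-smooth Banach spaces, of which Hilbert spaces are the canonical example. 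Since the lemma is cited verbatim from Section~2.4 of \citet{rosasco2010Learninga}, the simplest course is to invoke that reference directly rather than reproduce its proof.
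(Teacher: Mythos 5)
The paper does not give its own proof of this lemma; it simply cites Section~2.4 of \citet{rosasco2010Learninga} (which in turn invokes a Yurinskii/Pinelis-type inequality for Hilbert-space-valued sums), and your final recommendation to ``invoke that reference directly'' matches the paper exactly. Regarding the self-contained sketch you offer alongside the citation: the Chernoff step, the target tail bound $\bbP(\|S_n\|_\cH \ge t) \le 2\exp(-t^2/(2nC^2))$, and the conclusion at $t = C\sqrt{2n\tau}$ are all correct, and the quoted exponential-moment inequality $\mathbb{E}[\exp(\lambda\|S_n\|_\cH)] \le 2\exp(n\lambda^2C^2/2)$ is a genuine consequence of Pinelis's result for $2$-smooth Banach spaces. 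However, the induction you sketch is not yet a proof of that inequality: after writing $\|S_k\|_\cH^2 = \|S_{k-1}\|_\cH^2 + 2\operatorname{Re}\langle S_{k-1},\xi_k\rangle_\cH + \|\xi_k\|_\cH^2$, taking a square root produces a conditionally zero-mean, $C$-bounded term $\operatorname{Re}\langle S_{k-1},\xi_k\rangle_\cH/\|S_{k-1}\|_\cH$ to which Hoeffding applies, \emph{plus} a nonnegative residual on the order of $\|\xi_k\|_\cH^2/\|S_{k-1}\|_\cH$ that is not zero mean and is uncontrolled when $\|S_{k-1}\|_\cH$ is small. Handling this residual without degrading the constants is exactly the nontrivial content of Pinelis's martingale argument, so the ``reproduce the scalar Hoeffding step'' gloss hides the hard part. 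Either quote Pinelis's theorem as a black box (as you suggest as the ``more abstract'' route) or cite \citet{rosasco2010Learninga} directly as the paper does.
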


With this lemma, we can show
\begin{lemma}
	Under the general assumptions, the following facts hold true:
	\begin{enumerate}
		\item For some constant $C_{13}$, with confidence $1-2e^{-\tau}$
		\[
		\|d-d_n\|_{\cH^{p+2}} \leq C_{13} \frac{\sqrt{\tau}}{\sqrt{n}}.
		\]
		\item Both $K$ and $K_n$ are Hilbert-Schmidt operators from $\cH^s$ to $\cH^s$, and there exists some constant $C_{14}$ that doesn't depend on $n$ such that their Hilbert-Schmidt norm $\|K_n\|_{HS},\|K\|_{HS} \leq C_{14}$ is bounded.
		\item For some constant $C_{15}$, with confidence $1-2e^{-\tau}$
		\[
		\|K-K_n\|_{HS} \leq C \frac{\sqrt{\tau}}{\sqrt{n}}.
		\]
	\end{enumerate}
	\label{lm:Kd-concentration}
\end{lemma}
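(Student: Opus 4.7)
All three claims reduce to applications of the Hilbert-space concentration lemma (Lemma~\ref{lm:Hilbert-concentration}) in appropriately chosen Hilbert spaces. For part~1 the ambient Hilbert space is $\cH^{p+2}$; for parts~2 and~3 we exploit the fact that $\cH^s$ is an RKHS (since $\|f\|_\infty\leq C_6\|f\|_{\cH^s}$) to recast $K$ and $K_n$ as Bochner integrals, respectively empirical averages, of rank-one Hilbert-Schmidt operators. Throughout, for $u,v\in\cH^s$ we write $u\otimes v$ for the rank-one operator $f\mapsto \langle f,v\rangle_{\cH^s} u$, which lies in $HS(\cH^s,\cH^s)$ with $\|u\otimes v\|_{HS}=\|u\|_{\cH^s}\|v\|_{\cH^s}$.

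\textbf{Part 1.} Write $d_n-d=\tfrac1n\sum_{i=1}^n\bigl(k(\cdot,X_i)-d\bigr)$. Under the general assumptions, $k\in C_b^{p+2}(\cX\times\cX)$ implies $k(\cdot,y)\in C_b^{p+2}(\cX)\subset\cH^{p+2}$ uniformly in $y$, and the same then holds for $d$. Hence the summands are i.i.d.\ zero-mean elements of $\cH^{p+2}$ with uniformly bounded $\cH^{p+2}$-norm, and Lemma~\ref{lm:Hilbert-concentration} yields the $\sqrt{\tau/n}$ rate directly.

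\textbf{Parts 2 and 3.} Let $s(\cdot,\cdot)$ denote the reproducing kernel of $\cH^s$ and set $s_x:=s(\cdot,x)$; the embedding bound $\|f\|_\infty\leq C_6\|f\|_{\cH^s}$ gives $\|s_x\|_{\cH^s}\leq C_6$ uniformly in $x$. Combined with $\|k(\cdot,y)\|_{\cH^s}\leq\|k(\cdot,y)\|_{C_b^s}\lesssim \|k\|_{C_b^{p+2}(\cX\times\cX)}$ uniformly in $y$, this gives a uniform pointwise bound on $\|k(\cdot,y)\otimes s_y\|_{HS}$. Using the reproducing property $f(y)=\langle f,s_y\rangle_{\cH^s}$, a direct calculation shows
\[
K=\int_\cX k(\cdot,y)\otimes s_y\,dP(y),\qquad K_n=\frac1n\sum_{i=1}^n k(\cdot,X_i)\otimes s_{X_i},
\]
as Bochner integrals/averages in $HS(\cH^s,\cH^s)$; Bochner integrability is immediate from the uniform HS-bound on the integrand. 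This establishes part~2 with a uniform bound $C_{14}$ independent of $n$ and of the sample. For part~3, note that $K_n-K=\tfrac1n\sum_{i=1}^n\xi_i$ with $\xi_i:=k(\cdot,X_i)\otimes s_{X_i}-K$ i.i.d.\ zero-mean in the Hilbert space $HS(\cH^s,\cH^s)$ and with uniformly bounded HS-norm; Lemma~\ref{lm:Hilbert-concentration} applied in this Hilbert space then yields the desired rate.

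\textbf{Main obstacle.} The main technical subtlety is the rank-one representation of $K$ and $K_n$ as bona fide elements of $HS(\cH^s,\cH^s)$: this requires invoking the RKHS structure of $\cH^s$ from Section~\ref{sec:sob} to access its reproducing kernel $s(\cdot,\cdot)$, establishing the uniform bound $\|s_y\|_{\cH^s}\leq C_6$, and verifying that the $HS(\cH^s,\cH^s)$-valued map $y\mapsto k(\cdot,y)\otimes s_y$ is Bochner integrable. Everything else is then a routine application of the concentration lemma and the triangle inequality.
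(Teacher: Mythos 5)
Your proof is correct and follows essentially the same route as the paper: part~1 applies the Hilbert-space concentration lemma to $k(\cdot,X_i)-d$ in $\cH^{p+2}$, and parts~2--3 identify $K$ and $K_n$ as the mean and empirical average of the rank-one Hilbert--Schmidt operators $f\mapsto\langle f,s_{X_i}\rangle_{\cH^s}k_{X_i}$, bound $\|s_x\|_{\cH^s}$ via the Sobolev embedding and Riesz representation, and apply the concentration lemma in $HS(\cH^s)$. The paper writes these rank-one operators as $\ip{\cdot,s_{x}}_{\cH^s}k_{x}$ rather than $k_x\otimes s_x$ and does not spell out Bochner integrability explicitly, but these are cosmetic differences.
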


\begin{proof}
	For item 1, consider random variables $\xi_i = k(\cdot,X_i) - d \in \cH^{p+2}$ for $i \in [n]$. They are clearly zero mean. From the proof of Lemma \ref{lm:boundonD}, we see $d, k(\cdot,X_i) \in C_b^{p+2}(\cX)$. We thus have
	\[
	\|\xi_i\|_{\cH^{p+2}} \leq \|k(\cdot,X_i)\|_{\cH^{p+2}} + \|d\|_{\cH^{p+2}} \leq C_\cX \big( \|k(\cdot,X_i)\|_{C_b^{p+2}} + \|d\|_{C_b^{p+2}} \big)
	\] 
	where $C_\cX$ is some constant depending on the Lebesgue measure of the bounded set $\cX$. This suggests $\xi_i$'s are bounded. Since $\cH^{p+2}$ is a separable Hilbert space, we apply Lemma \ref{lm:Hilbert-concentration} and conclude that we have with probability $1-2e^{-\tau}$
	\[
	\|d-d_n\|_{\cH^{p+2}}\leq C_{13} \frac{\sqrt{\tau}}{\sqrt{n}}.
	\]

	For item 2, let us fix any $x\in \cX$ and consider the operator $\ip{\cdot, s_{x}}_{\cH^s}k_{x}$ where $s_{X_i}:=s(\cdot, X_i)$. This operator is in fact a Hilbert-Schmidt operator from $\cH^s$ to $\cH^s$. To see this, note that $\|\ip{\cdot,s_{x}}_{\cH^s}k_{x}\|_{HS} = \|s_{x}\|_{\cH^s}\|k_{x}\|_{\cH^s}$. With the same reasoning used for proving item 1, we see $\|k_{x}\|_{\cH^s}$ has a bound uniform on $\forall x\in \cX$. It remains to show $\|s_{x}\|_{\cH^s}$ has a uniform bound. Let $\delta_{x}:\cH^s\to \bbC$ be the evaluation functional, i.e. $\delta_x(f)=f(x)$. We know from the embedding theorem that $\|\delta_{x}\|_{op} \leq C_6$ for all $x\in \cX$. But $s_x$ also induces this point evaluation functional, so by Riesz representation theorem, $\|s_x\|_{\cH^s} = \|\delta_{x}\|_{op} \leq C_6$. Hence for some $C_{14}$, $\|\ip{\cdot,s_{x}}_{\cH^s}k_{x}\|_{HS} \leq C_{14}$ for all $x \in \cX$. Now let $x$ be random. We see $\|K\|_{HS} = \|\bbE \ip{\cdot, s_{X_i}}_{\cH^s}k_{X_i}\|_{HS} \leq \bbE \| \ip{\cdot, s_{X_i}}_{\cH^s}k_{X_i}\|_{HS} \leq C_{14}$, i.e.  $K$ is Hilbert-Schmidt. By the same reasoning, we see the claim for $K_n$ in item 2 is also true. 
	
	For item 3, consider random variables $\omega_i:=\ip{\cdot, s_{X_i}}_{\cH^s}k_{X_i} - K$. We know from item 2 that $\omega_i \in HS(\cH^s)$. Since $\cH^s$ is separable, the Hilbert space $HS(\cH^s)$ is also separable. We also know $\omega_i$ is zero mean and $\|\omega_i\|_{HS} \leq 2C_{14}$ is bounded. We can thus apply Lemma \ref{lm:Hilbert-concentration} and conclude that we have with probability $1-2e^{-\tau}$
	\[
	\|K-K_n\|_{HS} \leq C_{15} \frac{\sqrt{\tau}}{\sqrt{n}}
	\]
	for $C_{15}:=2C_{14}$.
\end{proof}

Combining Lemma \ref{lm:decompTTn} and \ref{lm:Kd-concentration}, we obtain the result we want
\begin{proposition}
	Under the general assumptions, with probability $1-4e^{-\tau}$, we have
	\[
	\|T-\hT_n\|_{HS} \leq C_{16} \frac{\sqrt{\tau}}{\sqrt{n}}
	\]
	for some constant $C_{16}$.
	\label{prop:T-concen}
\end{proposition}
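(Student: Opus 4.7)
The proposition is the capstone of Part two and Part three of the section, and at this point the hard analytic work has already been done in Lemmas \ref{lm:decompTTn} and \ref{lm:Kd-concentration}. My plan is to simply assemble those pieces via a union bound. The logical skeleton is: (i) use Lemma \ref{lm:decompTTn} to reduce the target quantity $\|T-\hT_n\|_{HS}$ to a combination of $\|d-d_n\|_{\cH^{p+2}}$, $\|K-K_n\|_{HS}$, and the operator-Hilbert-Schmidt norms $\|K\|_{HS},\|K_n\|_{HS}$; (ii) invoke the three items of Lemma \ref{lm:Kd-concentration} to control each of these ingredients; (iii) take a union bound on the two independent ``bad events'' coming from items 1 and 3 of that lemma.

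In more detail, first I would invoke item 2 of Lemma \ref{lm:Kd-concentration} to get a deterministic bound $\|K\|_{HS}+\|K_n\|_{HS}\le 2C_{14}$, with no randomness involved. Next, I would apply item 1 of Lemma \ref{lm:Kd-concentration} on the event $A_1$, where $\|d-d_n\|_{\cH^{p+2}}\le C_{13}\sqrt{\tau/n}$, which holds with probability at least $1-2e^{-\tau}$. Then I would apply item 3 on the event $A_2$, where $\|K-K_n\|_{HS}\le C_{15}\sqrt{\tau/n}$, which also holds with probability at least $1-2e^{-\tau}$. On the intersection $A_1\cap A_2$, the decomposition in Lemma \ref{lm:decompTTn} yields
\[
\|T-\hT_n\|_{HS} \le C_{12}\!\left(2C_{14}\cdot C_{13}\frac{\sqrt{\tau}}{\sqrt{n}} + C_{15}\frac{\sqrt{\tau}}{\sqrt{n}}\right) \le C_{16}\frac{\sqrt{\tau}}{\sqrt{n}},
\]
where $C_{16}:=C_{12}(2C_{13}C_{14}+C_{15})$ depends only on $\cX,\bbP,k$, not on $n$ or $\tau$.

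Finally, a union bound gives $\Pr(A_1\cap A_2) \ge 1 - 2e^{-\tau} - 2e^{-\tau} = 1 - 4e^{-\tau}$, completing the proof. There is no serious obstacle: the only thing to be mildly careful about is making sure the two concentration statements in Lemma \ref{lm:Kd-concentration} can be treated via a union bound (they can, since each is stated as a high-probability event independently of the other), and that the deterministic bound on $\|K\|_{HS}+\|K_n\|_{HS}$ from item 2 is invoked so that the ``random coefficient'' in front of $\|d-d_n\|_{\cH^{p+2}}$ does not cost an additional high-probability event. The entire proof is a short bookkeeping exercise once the ingredients are in hand.
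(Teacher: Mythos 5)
Your proposal is correct and follows essentially the same route as the paper, which simply invokes Lemma \ref{lm:decompTTn} together with a union bound over the two concentration events from Lemma \ref{lm:Kd-concentration}. The bookkeeping you spell out (the deterministic bound on $\|K\|_{HS}+\|K_n\|_{HS}$ and the explicit constant $C_{16}$) is exactly what the paper leaves implicit.
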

\begin{proof}
	A union bound and a direct application of lemma \ref{lm:decompTTn} will suffice for the proof.
\end{proof}

\subsection{Checking conditions for general theory}
In the first three paragraphs of section 3, we have laid out the conditions that must be satisfied for our general theory to apply. We've already checked most of them implicitly in the previous three subsections, but for completeness, we summarize all such conditions here and prove them.

\begin{lemma}
	Under the general conditions, the following facts hold true:
	\begin{enumerate}
		\item The Sobolev space $H^s$ is a subspace of $L^2(\cX, \bbP)$.
		\item The $\cH^s$ norm $\|\cdot \|_{\cH^s}$ is stronger than infinity norm.
		\item Both $T,\hT_n$ are Hilbert-Schmidt from $\cH_s$ to $\cH_s$.
		\item All eigenvalues of $T$ (counting multiplicity) can be arranged in a decreasing (possibly infinite) sequence of non-negative real numbers $\lambda_1 \geq \lambda_2 \geq \ldots \geq \lambda_K > \lambda_{K+1} \geq \ldots \geq 0$ with a positive gap between $\lambda_K$ and $\lambda_{K+1}$.
		\item The top $K$ eigenfunctions $\{f_i\}_{i=1}^K \subset \cH^s$ and can be picked to form an orthonormal set of functions in $L^2(\cX, \bbP)$.
		\item $\hT_n$ has a sequence of non-increasing, real, non-negative eigenvalues.
	\end{enumerate}
\label{lm:check-for-gen-theory}
\end{lemma}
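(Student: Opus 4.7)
The plan is to verify the six claims by stringing together pieces already established in the preceding parts of Section \ref{sec:spectralClustering}. Items 1 and 2 are essentially bookkeeping: the density bounds $p_l < p(x) < p_u$ from the general assumptions make $L^2(\cX, dx)$ and $L^2(\cX, \bbP)$ coincide as sets with equivalent norms, so $\cH^s \subset L^2(\cX, dx) = L^2(\cX, \bbP)$, and the choice $s = \floor{p/2} + 1$ activates the Sobolev embedding \eqref{eq:sobo_embedding} to give $\|\cdot\|_\infty \leq C_6 \|\cdot\|_{\cH^s}$. Item 5 splits into two pieces: membership $\{f_i\}_{i=1}^K \subset \cH^s$ follows from $C_b^{p+2}(\cX) \subset \cH^s$ (another invocation of \eqref{eq:sobo_embedding}), while $L^2(\cX, \bbP)$-orthonormality comes from the spectral theorem for compact self-adjoint operators applied to $T$ on $L^2(\cX, \bbP)$, which is self-adjoint because the kernel $k(x,y)/\sqrt{d(x) d(y)}$ is real and symmetric in its two arguments. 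Item 6 is immediate from Lemma \ref{lm:spectralEquivalence}: the nonzero eigenvalues of $\hT_n$ coincide with those of the symmetric positive semidefinite matrix $L_n$, and zero is trivially real and nonnegative.

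For item 3, I would factor $T = D^{-1/2} K D^{-1/2}$ and $\hT_n = D_n^{-1/2} K_n D_n^{-1/2}$ as operators from $\cH^s$ to $\cH^s$. Item 2 of Lemma \ref{lm:Kd-concentration} already establishes $K, K_n \in HS(\cH^s)$, and Lemma \ref{lm:boundonD} supplies uniform operator norm bounds on $D^{\pm 1/2}$ and $D_n^{\pm 1/2}$. Combining these with the ideal-property inequalities $\|AB\|_{HS} \leq \|A\|_{HS}\|B\|$ and $\|BA\|_{HS} \leq \|B\|\|A\|_{HS}$ recorded in the preliminaries closes item 3.

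The main obstacle is item 4, where the general assumptions describe the spectrum of $T$ viewed on $L^2(\cX, \bbP)$, but the invariant-subspace machinery of Section 3 requires the same ordered spectrum for $T$ viewed on $\cH^s$. My plan is a smoothing bootstrap: if $Tf = \lambda f$ with $f \in L^2(\cX, \bbP)$ and $\lambda \neq 0$, then $f = \lambda^{-1} Tf$, and since the kernel $k(x,y)/\sqrt{d(x) d(y)}$ is $C_b^{p+2}$ in $x$ uniformly in $y$ (by the proof of Lemma \ref{lm:boundonD}), differentiation under the integral sign shows $Tf \in C_b^{p+2}(\cX) \subset \cH^s$. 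Hence every nonzero $L^2(\cX,\bbP)$-eigenpair of $T$ is also a $\cH^s$-eigenpair, and the reverse inclusion is trivial from item 1. The two nonzero spectra therefore agree, so the ordered sequence with the positive gap between $\lambda_K$ and $\lambda_{K+1}$ promised by the general assumptions carries over to $\cH^s$.
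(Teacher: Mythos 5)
Your proof is correct, and items 1, 2, 3, 5, and 6 track the paper's argument essentially line for line: the density bounds identify $L^2(\cX,dx)$ with $L^2(\cX,\bbP)$, the Sobolev embedding gives the $\infty$-norm domination, the factorization $T=D^{-1/2}KD^{-1/2}$ together with Lemmas \ref{lm:boundonD} and \ref{lm:Kd-concentration} closes item 3, self-adjointness of $T$ on $L^2(\cX,\bbP)$ gives the orthonormal eigenbasis in item 5, and the $\cH^s$ analogue of Lemma \ref{lm:spectralEquivalence} together with the symmetric PSD structure of $L_n$ gives item 6. Where you genuinely diverge from the paper is item 4. The paper argues by restriction: since $\cH^s\subset L^2(\cX,\bbP)$, the $\cH^s$-spectrum can only be a subset of the $L^2$-spectrum, and the standing assumption $\{f_i\}_{i=1}^K\subset C_b^{p+2}(\cX)\subset\cH^s$ guarantees the leading $K$ eigenvalues survive the restriction, preserving the gap. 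Your smoothing bootstrap instead uses the eigenvalue equation $f=\lambda^{-1}Tf$ together with the $C_b^{p+2}$-regularity of the normalized kernel $h(x,y)=k(x,y)/\sqrt{d(x)d(y)}$ in $x$ (uniformly in $y$), Leibniz's rule, and dominated convergence to conclude $Tf\in C_b^{p+2}(\cX)\subset\cH^s$, so every nonzero $L^2$-eigenpair is automatically an $\cH^s$-eigenpair and the two nonzero spectra coincide exactly. This is a strictly stronger conclusion than the paper's subset relation, and it has the pleasant side effect of showing that the General Assumption $\{f_i\}_{i=1}^K\subset C_b^{p+2}(\cX)$ is in fact \emph{implied} by the other hypotheses (for eigenfunctions with nonzero eigenvalue), so that assumption could be dropped. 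One thing the paper does that you skip: it establishes positivity, self-adjointness, and Hilbert--Schmidtness of $T$ as an operator on $L^2(\cX,\bbP)$ from the kernel hypotheses (including a sampling argument for positivity), rather than reading the ordering of the spectrum off the General Assumptions as you do. Your shortcut is legitimate given how those assumptions are phrased, but if you want your argument to be self-contained in the same sense as the paper's, you should at least flesh out the self-adjointness computation (which item 5 genuinely needs) rather than only asserting it.
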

\begin{proof}
	For item 1, this is because $H^s$ is a subspace of $L^2(\cX, dx)$ and under our assumptions on $\cX$ and $\bbP$, $L^2(\cX, dx)$ and $L^2(\cX, \bbP)$ are the same space. First of all, since the underlying $\sigma$-algebra of $\bbP$ is the Lebesgue $\sigma$-algebra, the set of measurable functions are the same. If $f \in L^2(\cX, dx)$, then $f$ is also in $L^2(\cX, \bbP)$ because $\int_\cX f\bar{f} dP = \int_\cX f\bar{f} p(x) dx \leq p_u \int_\cX f\bar{f} dx < \infty$. The converse is also true. It is not hard to see our assumptions ensure that the Lebesgue measure is absolutely continuous with respect to $\bbP$ with the density being $1/p(x)$ a.s.. Noticing $1/p(x) < 1/p_l$, we can prove the converse.
	
	Item 2 is the consequence of Sobolev embedding theorem and has been used time and again in the previous subsections. Item 3 is the joint consequence of Lemma \ref{lm:boundonD} and \ref{lm:Kd-concentration}. 
	
	For item 4 and 5, we first show $T$ as an operator from $L^2(\cX, \bbP)$ to $L^2(\cX, \bbP)$ is positive, self-adjoint, and Hilbert-Schmidt. Let $h(x,y):=k(x,y)/\sqrt{d(x)d(y)}$ denote the normalized kernel. The self-adjointness is due to the (conjugate) symmetry $h(\cdot,\cdot)$ inherited from $k(\cdot,\cdot)$:
	\begin{gather*}
		\ip{f, Tg} = \int f(x) \Big( \int \average[2.5]{h(x,y)}  \average[2.5]{g(y)} dP(y) \Big)dP(x)
		= \iint h(y,x)f(x)\average[2.5]{g(y)}dP(x)dP(y), \\
		\ip{Tf, g} = \int  \Big( h(x,y) f(y) dP(y) \Big) \average[2.5]{g(x)} dP(x) = \iint h(x,y)f(y)\average[2.5]{g(x)}dP(x)dP(y).
	\end{gather*}
	We thus see $\ip{f, Tg} = \ip{Tf, g}$, i.e. $T$ is self-adjoint. To see why {$T$ is Hilbert-Schmidt}, let real-valued functions $\{e_i\}_{i=1}^\infty$ be an orthonormal basis of $L^2(\cX, \bbP)$ . We calculate 
	\begin{gather*}
	\|T\|_{HS}^2 = \sum_{i=1}^{\infty}  \Bigg( \int \Big( \int h(x,y) e_i(y) dP(y) \Big)^2 dP(x) \Bigg) = \int \Big( \sum_{i=1}^{\infty} \ip{h_x, e_i}_{L^2}^2 \Big) dP(x) \\
	= \iint h^2(x,y) dP(x)dP(y) \leq \kappa_u^2/\kappa_l^2 < \infty.
	\end{gather*}
	The positive part is slightly more involved. To show $T$ is positive, we need to show for $\forall f \in L^2(\cX, \bbP)$
	\[
	\ip{Tf,f}=\iint k(x,y) \average[2.5]{f(x)} f(y) dP(x)dP(y) \geq 0.
	\]
	Let us fix a sample size $n$ and draw i.i.d. samples $X_1,X_2,\ldots,X_n \sim \bbP$. Then since the kernel $k(\cdot,\cdot)$ is positive definite, the quadratic form
	\[
	\frac{1}{n^2}\sum_{i,j=1}^n k(X_i,X_j)\average[2.5]{f(X_i)} f(X_j)
	\]
	is non-negative regardless of what samples we draw. It thus follows that the expectation of this quadratic form is non-negative. A simple calculation suggests that the expectation is in fact
	\[
	\frac{n-1}{n} \ip{Tf,f} + \frac{1}{n} \int k(x,x) \average[2.5]{f(x)} f(x) dP(x).
	\]
	
	Since by our assumption $k(x,x) \leq \kappa_u$ and $f\in L^2(\cX, \bbP)$, we see $\int k(x,x) \average[2.5]{f(x)} f(x) dP(x)$ is finite. Since $n$ can be arbitrarily large, $\ip{Tf,f}$ must be non-negative.

	According to the spectral theory for positive, self-adjoint, Hilbert-Schmidt operators we introduced in Section \ref{sec:spec-theory}, we immediately see most parts of item 4 and 5 are true. The remaining part to check for item 5 is that $\{f_i\}_{i=1}^K \subset \cH^s$, which is implied by our assumption that $\{f_i\}_{i=1}^K \subset C_b^{p+2}(\cX)$. The eigengap part in item 4 is also assumed by the general assumption. A nuance in item 4 is that the eigenvalues and eigenvectors there are under the premise that $T$ is an operator from $\cH^s$ to $\cH^s$. But since $\cH^s$ is a subspace of $L^2(\cX, \bbP)$, we can only have fewer eigenvalues than treating $T$ as in $\cL(L^2(\cX, \bbP))$. Plus, since our general assumptions imply $\{f_i\}_{i=1}^K \subset \cH^s$, the leading eigenspace remains unchanged after the restriction from $\cL(L^2(\cX, \bbP))$ to $\cH^s$.

	For item 6, this is true because of the relationship between the spectrum of $\hT_n$ and that of the symmetric positive semi-definite kernel matrix $L_n$. A sort of Lemma \ref{lm:spectralEquivalence} is also true with the $C_b(\cX)$ therein replaced by $\cH^s$

\end{proof}

Because of Lemma \ref{lm:check-for-gen-theory}, we can apply a slightly modified version Theorem \ref{thm:main-thm} (see the proof for Theorem \ref{thm:main-thm}) to obtain the following. 
\begin{proposition}
	For some constant $C_{17},C_{18}$, whenever $n>C_{17}\tau$, we have with confidence $1-4e^{-\tau}$ that
	\begin{equation}
		\|V_1 - (V_1+V_2Y)(I+Y^*Y)^{-1/2}\|_{2 \to \infty} \leq C_{18} \frac{\sqrt{\tau}}{\sqrt{n}}
	\end{equation}
	\label{prop:sampling-error}
\end{proposition}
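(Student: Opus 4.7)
The plan is to combine the concentration bound of Proposition \ref{prop:T-concen} with the general perturbation theory of Theorem \ref{thm:main-thm}, applied with $\cH = \cH^s$ and $\tT = \hT_n$. All structural prerequisites of the general theory (subspace relation, norm domination, Hilbert--Schmidtness of $T$ and $\hT_n$, existence of an eigengap, and orthonormality of the top eigenfunctions in $L^2(\cX, \bbP)$) have already been verified in Lemma \ref{lm:check-for-gen-theory}, so the only remaining quantitative ingredient is that the Hilbert--Schmidt norm of the perturbation $E := \hT_n - T$ be small enough to trigger the conclusion of the general theorem.

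First, I would choose $C_{17}$ large enough that whenever $n \geq C_{17}\tau$ the bound $C_{16}\sqrt{\tau/n} \leq C_1$ holds, where $C_1$ is the deterministic constant from \eqref{eq:def-C1} (which depends only on $T$, $\cX$, $\bbP$, $k$ through the constants $C_\cH$, $C_3$, $\delta$, $\eta$, $\ell(\Gamma)$, $\lambda_K - \lambda_{K+1}$, and is therefore independent of $n$). On the high-probability event of Proposition \ref{prop:T-concen}, we then have $\|E\|_{HS} \leq C_1$ with confidence at least $1 - 4e^{-\tau}$, placing us squarely inside the hypothesis of Theorem \ref{thm:main-thm}.

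At this point the argument reduces to re-reading the proof of Theorem \ref{thm:main-thm}. Proposition \ref{lm:nk} produces an operator $Y_E \in \cL(\bbC^K, \tl^2)$ solving \eqref{eq:quad_eq} with $\|Y_E\|_{HS} \leq (4C_3/\delta)\|E\|_{HS}$; Proposition \ref{lm:step3main} confirms that the range of $V_1 + V_2 Y_E$ is precisely the leading $K$-dimensional invariant subspace of $\hT_n$, so this $Y_E$ is exactly the $Y$ that appears in the statement of Proposition \ref{prop:sampling-error}. Proposition \ref{prop:uniferror} then directly controls the quantity of interest:
\[
\bigl\|V_1 - (V_1 + V_2 Y_E)(I + Y_E^* Y_E)^{-1/2}\bigr\|_{2 \to \infty} \leq C_\cH(\|V_1\|_{2\to\infty} + 1)\|Y_E\|_{2 \to \tl^2}.
\]
Chaining this with the Hilbert--Schmidt bound on $Y_E$ and the concentration estimate $\|E\|_{HS} \leq C_{16}\sqrt{\tau/n}$ yields the desired inequality with $C_{18} = 4 C_3 C_\cH (\|V_1\|_{2\to\infty} + 1)\, C_{16}/\delta$.

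The only mildly delicate point — which is what the paper alludes to by calling this a ``slightly modified'' version of Theorem \ref{thm:main-thm} — is that Proposition \ref{prop:sampling-error} is phrased for the specific representative $(V_1 + V_2 Y)(I + Y^* Y)^{-1/2}$ rather than the infimum over $Q \in \bbU^K$. This is not a real obstacle: the infimum in Theorem \ref{thm:main-thm} was attained by exactly this representative via the orthonormalization step of Proposition \ref{prop:uniferror}, so invoking that proposition instead of the theorem's final statement gives the claimed bound without any loss. The bookkeeping of constants and the union bound controlling the $1 - 4e^{-\tau}$ probability are routine.
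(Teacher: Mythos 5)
Your proposal is correct and takes essentially the same route as the paper: both hinge on Proposition \ref{prop:T-concen} to push $\|E\|_{HS}$ below $C_1$ with probability $1-4e^{-\tau}$ once $n \geq C_{17}\tau$, and then invoke the intermediate chain in the proof of Theorem \ref{thm:main-thm} (Propositions \ref{lm:nk}, \ref{lm:step3main}, \ref{prop:uniferror}) to bound the specific orthonormalized representative rather than the infimum over $\bbU^K$. The paper leaves this tracing implicit with the phrase ``by an intermediate step,'' whereas you spell out which steps and how the constant $C_{18} = 4C_3 C_\cH(\|V_1\|_{2\to\infty}+1)C_{16}/\delta$ emerges, which is a faithful unpacking rather than a different argument.
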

\begin{proof}
	By proposition \ref{prop:T-concen}, we have with probability $1-4e^{-\tau}$,
	\[
	\|T-\hT_n\|_{HS} \leq C_{16} \frac{\sqrt{\tau}}{\sqrt{n}}.
	\]
	We can set $C_{17}$ sufficiently large that $C_{16} \frac{\sqrt{\tau}}{\sqrt{n}} \leq C_{16}/\sqrt{C_{17}}\leq C_1$, where $C_1$ is the constant from Theorem \ref{thm:main-thm}. Hence by an intermediate step in the proof of Theorem \ref{thm:main-thm}, we conclude
	\[
	\|V_1 - (V_1+V_2Y)(I+Y^*Y)^{-1/2}\|_{2 \to \infty} \leq C_{18} \frac{\sqrt{\tau}}{\sqrt{n}}.
	\]
\end{proof}

\subsection{Part five: error induced by $\hV_1$}
We deal with the error induced by operator $\hV_1$ not having orthonormal columns in $L^2(\cX, \bbP)$. Introduce the shorthand $W:=(V_1+V_2Y)(I+Y^*Y)^{-1/2}$ and we have
\begin{align}
\|V_1 - \hV_1 Q \|_{2 \to \infty} &\leq \|V_1 - W \|_{2 \to \infty} + \|W - \hV_1 Q \|_{2 \to \infty} \nonumber \\
&= \|V_1 - W \|_{2 \to \infty} + \|W - WW^*\hV_1 Q \|_{2 \to \infty} \nonumber \\
&\leq \|V_1 - W \|_{2 \to \infty} + \|W \|_{2 \to \infty} \| Q^T -W^*\hV_1 \|_2. \label{eq:QandWV}
\end{align}
Here, the equality in the second step is true because $\hV_1$ and $W$ span the same leading eigenspace\footnote{Since $\hV_1$ is constructed from the eigenvectors of $L_n$ which are linearly independent, the columns cannot be linearly dependent functions in $\cH^s\subset C_b(\cX)$.} and $W$ has orthonormal columns in $L^2(\cX, \bbP)$. Inspecting \eqref{eq:QandWV}, we see $\|V_1 - W \|_{2 \to \infty}$ is bounded by Proposition \ref{prop:sampling-error}, and $\|W \|_{2 \to \infty}$ is roughly $\|V_1 \|_{2 \to \infty}$ thus bounded, so it all boils down to how ``far'' $W^*\hV_1$ is from an unitary matrix in $\bbC^{K \times K}$. In fact, we have
\begin{lemma}
	Assume all the singular values of $W^*\hV_1$ are less than $2$, then there exists unitary matrix $Q\in \bbC^{K \times K}$ such that
	\[
	\|Q-W^*\hV_1\|_2 \leq 2 \|W\|_{2 \to H^s}^2 \sup_{{\|g\|_{\cH^s}\leq 1}} \Big|P_ng^2 - Pg^2 \Big|.
	\]
	\label{lm:bound-unitary}
\end{lemma}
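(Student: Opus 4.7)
The strategy is to identify the matrix $A := W^*\hV_1$ explicitly, then construct the desired unitary $Q$ from its SVD. Since $W$ was obtained in Proposition~\ref{lm:step3main} by orthonormalizing $V_1 + V_2 Y_E$, both $W$ and $\hV_1$ have columns spanning the leading $K$-dimensional invariant subspace of $\hT_n$. Hence there exists an invertible $A \in \bbC^{K\times K}$ with $\hV_1 = WA$. The $L^2(\cX, \bbP)$-orthonormality $W^*W = I_K$ gives $A = W^*\hV_1$. Let $A = U\Sigma V^*$ be the SVD and set $Q := UV^*$, which is unitary; then
\[
\|Q - W^*\hV_1\|_2 \;=\; \|\Sigma - I\|_2 \;=\; \max_i |\sigma_i - 1|.
\]

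The next step is to convert the $L^2(\cX, \bbP_n)$-orthonormality of $\hV_1$ into a perturbation identity for $\Sigma$. Define the cross-Gram matrix $M \in \bbC^{K\times K}$ by $u^*Mu := \|Wu\|_{L^2(\cX,\bbP_n)}^2$ (equivalently, $M_{ij} = \langle w_j, w_i\rangle_{L^2(\bbP_n)}$). From $\hV_1 = WA$ and $\hV_1^*\hV_1 = I_K$ in the $L^2(\bbP_n)$ inner product, one obtains $A^*MA = I_K$. Setting $E := M - I_K$, the $L^2(\bbP)$-orthonormality of the columns of $W$ translates into
\[
u^*Eu \;=\; (\bbP_n - \bbP)\bigl(|Wu|^2\bigr), \qquad \forall\,u \in \bbC^K.
\]
Substituting $A = U\Sigma V^*$ into $A^*MA = I_K$ gives $M = U\Sigma^{-2}U^*$, so the eigenvalues of $M$ are $1/\sigma_i^2$, and hence $|1/\sigma_i^2 - 1| \leq \|M - I_K\|_2 = \|E\|_2$. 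Rearranging,
\[
|1 - \sigma_i| \;=\; \frac{|1 - \sigma_i^2|}{\sigma_i + 1} \;\leq\; \frac{\sigma_i^2 \|E\|_2}{\sigma_i + 1} \;\leq\; \sigma_i \|E\|_2 \;<\; 2\|E\|_2,
\]
where the final inequality uses the standing hypothesis $\sigma_i < 2$. This yields $\|Q - W^*\hV_1\|_2 \leq 2\|E\|_2$.

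It remains to bound $\|E\|_2$ by the supremum in the lemma statement. Since $E$ is Hermitian, $\|E\|_2 = \sup_{\|u\|_2 = 1}|u^*Eu|$. For unit $u$, writing $g := Wu \in \cH^s$, one has $\|g\|_{\cH^s} \leq \|W\|_{2\to\cH^s}$, so after rescaling,
\[
|u^*Eu| \;=\; \bigl|(\bbP_n - \bbP)(|g|^2)\bigr| \;\leq\; \|W\|_{2\to\cH^s}^2 \sup_{\|g\|_{\cH^s} \leq 1}\bigl|P_n g^2 - P g^2\bigr|.
\]
Combining this with the bound from the previous paragraph proves the lemma.

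The only delicate point is the scalar manipulation $|1 - \sigma_i| \leq \sigma_i \|E\|_2$: a naive route through $A^*A = I - A^*EA$ would force a bound in terms of $\|A\|_2^2 < 4$, costing an extra factor of two. Routing instead through $M = U\Sigma^{-2}U^*$ produces a factor of $\sigma_i$ from the cancellation $\sigma_i^2/(\sigma_i + 1) \leq \sigma_i$, which together with the assumption $\sigma_i < 2$ yields exactly the advertised constant $2$. Everything else is routine SVD bookkeeping and a direct computation of the Gram perturbation $E$.
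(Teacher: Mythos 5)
Your proof is correct and reaches the paper's bound by essentially the same route: take the SVD of $W^*\hV_1$, set $Q$ to the unitary polar factor, relate $\|\Sigma - I\|_2$ to a perturbation of orthonormality measured in $L^2(\cX,\bbP_n)$ versus $L^2(\cX,\bbP)$, use the singular-value assumption to unwind the scalar inequality with constant $2$, and finish with the $\cH^s$-ball uniform LLN. The one genuine difference is in how you establish that the singular values $\sigma_i$ equal the reciprocals of the $L^2(\bbP_n)$-norms of the rotated columns of $W$: the paper argues geometrically, showing via the diagonality of $\Sigma$ (viewed as a cross-Gram matrix in $L^2(\bbP)$) that the rotated columns $\tg_i$ of $\hV_1$ must be collinear with those of $W$, whereas you bypass this by introducing the Hermitian Gram matrix $M$ and reading off its eigendecomposition $M = U\Sigma^{-2}U^*$ from the identity $A^*MA = I$. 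Your algebraic route is cleaner — it makes the diagonalization automatic rather than something deduced from orthogonality constraints — and it quietly gives a slightly sharper intermediate scalar estimate ($|1-\sigma_i|\le \sigma_i\|E\|_2$ versus the paper's $|1-1/x|\le 2|x^2-1|$), though both collapse to the same advertised factor of $2$.
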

\begin{proof}
	Suppose $W^*\hV_1$ admits singular value decomposition $W^*\hV_1 = A \Sigma B^*$, then $\Sigma = A^*W^* \hV_1 B$. Let $g_i:=WAe_i$ be the $i$-th column in $WA$, $\tg_i:=\hV_1 B e_i$ be the $i$-th column in $\hV_1 B$ where $\{e_i\}_{i=1}^K$ is the standard basis in $\bbR^K$. We know $\Sigma = (\Sigma_{ij})$ where $\Sigma_{ij} = \ip{\tg_i,g_j}_{L^2(\cX, \bbP)}$.
	
	Since $A,B$ are unitary matrices, $\{g_i\}_{i=1}^K$ are orthonormal in $L^2(\cX, \bbP)$ and $\{\tg_i\}_{i=1}^K$ orthonormal in $L^2(\cX, \bbP_n)$. So $g_1$ is orthogonal to $g_2,\ldots,g_K$. At the same time, from the diagonal structure of $\Sigma$, we know $\tg_1$ is orthogonal to $g_2,\ldots,g_K$ as well. This suggests $g_1$ is collinear with $\tg_1$. On top of that, since the diagonal entries of $\Sigma$ are real positive values, we know $\tg_1=g_1/\|g_1\|_{L^2(\cX, \bbP_n)}$. This in fact holds for all $i \in [K]$, i.e. $\tg_i=g_i/\|g_i\|_{L^2(\cX, \bbP_n)}$. Taking $Q= AB^*$, which is a unitary matrix, we have
	\begin{equation}
	\label{eq:proscute}
	\| Q -W^*\hV_1 \|_2 = \| \Sigma - I \|_2 = \max_{i \in [K]} |1-\ip{\tg_i,g_i}| = \max_{i \in [K]} | 1- \frac{1}{\|g_i\|_{L^2(\cX, \bbP_n)}} |.
	\end{equation}
	
	By our assumption on the singular values, we know for all $i \in [K]$, $\frac{1}{\|g_i\|_{L^2(\cX, \bbP_n)}}=\ip{\tg_i,g_i} \leq 2$. Note that for $x \geq \frac{1}{2}$, $|1-\frac{1}{x}|\leq 2 |x-1| \leq 2 |x^2-1|$, we see
	\begin{equation}
	\max_{i \in [K]} | 1- \frac{1}{\|g_i\|_{L^2(\cX, \bbP_n)}} | \leq 2 \max_{i \in [K]} | 1- \|g_i\|_{L^2(\cX, \bbP_n)}^2 | = 2\max_{i \in [K]} \big| \frac{1}{n}\sum_{j=1}^{n} |g_i(X_j)|^2 - \bbE |g_i(X)|^2 \big|.
	\label{eq:anglechain}
	\end{equation}
	
	Since $g_i$'s rely on the samples $\{X_i\}_{i=1}^n$, they are random. What they have in common is they have bounded $\cH^s$ norm, which is because
	\[
	\|g_i\|_{\cH^s} = \|WAe_i\|_{\cH} \leq \|W\|_{2 \to \cH^s} \|A\|_2 \|e_i\|_2= \|W\|_{2 \to \cH^s}.
	\]
	
	Therefore,
	\begin{equation}
	\max_{i \in [K]} \big| \frac{1}{n}\sum_{j=1}^{n} |g_i(X_j)|^2 - \bbE |g_i(X)|^2 \big| \leq \sup_{\|g\|_{\cH^s}\leq \|W\|_{2 \to \cH^s}}\Big|P_n|g|^2 - P|g|^2 \Big| = \|W\|_{2 \to \cH^s}^2 \sup_{{\|g\|_{\cH^s}\leq 1}} \Big|P_n|g|^2 - P|g|^2 \Big|.
	\label{eq:supchain}
	\end{equation}
	Chaining \eqref{eq:supchain} and \eqref{eq:anglechain} completes the proof. 
\end{proof}

Using Dudley inequality and standard results on the covering number in Sobolev space, we can show (with proof in the appendix)
\begin{lemma}
	For our choice of $s$, we have with probability $1-4\exp(-\tau)$
	\[
	\sup_{{\|g\|_{\cH^s}\leq 1}} \Big|P_n|g|^2 - P|g|^2 \Big|  \leq \frac{C_{19}+C_{20} \sqrt{\tau}}{\sqrt{n}}.
	\]
	\label{lm:ulln}
\end{lemma}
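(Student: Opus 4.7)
The plan is to combine a bounded-differences concentration step with a standard symmetrization-and-chaining argument applied to the function class $\cF := \{|g|^2 : \|g\|_{\cH^s} \leq 1\}$. By the Sobolev embedding recalled in Section \ref{sec:sob}, every $g$ with $\|g\|_{\cH^s} \leq 1$ satisfies $\|g\|_\infty \leq C_6$, so $\cF$ is uniformly bounded by $C_6^2$. Setting $Z(X_1,\ldots,X_n) := \sup_{\|g\|_{\cH^s}\leq 1}|P_n|g|^2 - P|g|^2|$, swapping a single coordinate changes $Z$ by at most $2C_6^2/n$, and McDiarmid's inequality yields
\[
\bbP\!\left(Z \geq \bbE Z + C_6^2\sqrt{2\tau/n}\right) \leq e^{-\tau}.
\]
It therefore suffices to bound $\bbE Z$ by $C_{19}/\sqrt{n}$ for a suitable constant, with any remaining stochastic slack absorbed into the $1 - 4e^{-\tau}$ probability.

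Next, I would apply the classical symmetrization inequality to get $\bbE Z \leq 2 \bbE R_n(\cF)$, where $R_n(\cF) = \bbE_\epsilon[\sup_{f \in \cF}\frac{1}{n}\sum_{i=1}^n\epsilon_i f(X_i)]$ and the $\epsilon_i$ are independent Rademacher variables. Conditioning on the sample, the Rademacher process indexed by $f \in \cF$ is sub-Gaussian with respect to the empirical $L^2$ pseudometric, which is dominated by the $\|\cdot\|_\infty$ metric on $\cF$. Dudley's entropy integral then yields
\[
R_n(\cF) \;\leq\; \frac{C}{\sqrt{n}}\int_0^{C_6^2}\!\sqrt{\log N(\epsilon, \cF, \|\cdot\|_\infty)}\,d\epsilon.
\]

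To control the covering number, observe that the map $g \mapsto |g|^2$ is $2C_6$-Lipschitz with respect to $\|\cdot\|_\infty$ on the ball $\{g : \|g\|_\infty \leq C_6\}$, so $N(\epsilon,\cF,\|\cdot\|_\infty) \leq N(\epsilon/(2C_6), B_{\cH^s}, \|\cdot\|_\infty)$ where $B_{\cH^s}$ is the unit ball of $\cH^s$. For our choice $s = \lfloor p/2\rfloor + 1$, the appendix's Lemma~\ref{lm:coveringnumber} (a standard entropy-number estimate for Sobolev embeddings on domains with $C^\infty$ boundary, as in Edmunds--Triebel and Cucker--Smale) gives $\log N(\epsilon, B_{\cH^s}, \|\cdot\|_\infty) \lesssim \epsilon^{-p/s}$. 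Crucially, since $s > p/2$ we have $p/s < 2$, so $\sqrt{\log N(\epsilon,\cF,\|\cdot\|_\infty)}$ is integrable at $0$ and the Dudley integral is finite. This yields $\bbE Z \leq C_{19}/\sqrt n$, and combining with the McDiarmid bound produces the claimed inequality.

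The main obstacle I anticipate is bookkeeping rather than conceptual: verifying that the entropy bound of Lemma~\ref{lm:coveringnumber} applies under the exact regularity assumptions imposed on $\cX$ (bounded open, $C^\infty$ boundary, cone condition), and then carefully tracking the constants so that the Dudley integral, the Lipschitz factor $2C_6$, and the $C_6^2$ envelope combine into clean constants $C_{19}, C_{20}$ that are independent of $n$ and $\tau$. The probability factor $4e^{-\tau}$ comes from combining the McDiarmid upper tail with any auxiliary event used to replace $\bbE Z$ by its data-dependent analogue (if desired) or from applying the bounded-differences step in a two-sided form together with a concentration step for the Rademacher average itself.
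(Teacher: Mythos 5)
Your proposal is correct and yields the claimed bound, but it takes a genuinely different route from the paper. The paper does \emph{not} use McDiarmid or symmetrization: it works directly with the centered empirical process $X_g = P_n|g|^2 - P|g|^2$, establishes that this process has sub-Gaussian increments of order $C n^{-1/2}\|f-g\|_\infty$ (via the same Lipschitz estimate $\||f|^2-|g|^2\|_\infty \le 2C_6\|f-g\|_\infty$ you use, plus Hoeffding-type bounds on the increments), and then applies a tail version of Dudley's inequality (Vershynin, Thm.~8.1.6) that produces a high-probability bound in one shot; this is done twice (once for each sign) and combined by a union bound, which is precisely where the $4e^{-\tau}$ in the statement comes from. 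Your route instead factors the argument into (i) McDiarmid for concentration of $Z$ about $\bbE Z$ and (ii) symmetrization followed by the expectation form of Dudley to control $\bbE Z$. The shared core is identical: the Sobolev embedding gives the $C_6$-envelope, the pointwise $2C_6$-Lipschitz transfer gives the covering number of $\cF$ from that of $B_{\cH^s}$, and Lemma~\ref{lm:coveringnumber} together with $s=\lfloor p/2\rfloor+1>p/2$ makes the entropy integral converge. One small correction: your closing paragraph about where the factor $4$ originates is muddled and unnecessary. Since $Z$ is already defined as a supremum of absolute values, a single one-sided McDiarmid bound suffices, and you would actually get the (slightly stronger) probability $1-e^{-\tau}$; the factor $4$ is an artifact of the paper's two-sided chaining-plus-union-bound route, not something your argument needs to reproduce. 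Also, the bounded-difference constant is $C_6^2/n$ rather than $2C_6^2/n$ (since $0\le |g|^2\le C_6^2$ pointwise), though this only changes constants.
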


We are now ready to prove Theorem \ref{thm:spec-consis}.
\begin{proof}[proof of theorem \ref{thm:spec-consis}]
	For fixed sample size $n$, let $\cE_{n,1}$ be the event when the concentration in Proposition \ref{prop:T-concen} holds, and let $\cE_{n,2}$ be the event when the concentration in Lemma \ref{lm:ulln} holds. From now on, we condition on the intersection $\cE_{n,1} \cap \cE_{n,2}$, which happens with probability greater than of equal to $1 - 8e^{-\tau}$. 
	
	First of all, on this event, we know Proposition \ref{prop:sampling-error} also holds. We thus have
	\[
	\|V_1 - W\|_{2 \to \infty} \leq C_{18} \frac{\sqrt{\tau}}{\sqrt{n}}.
	\]
	So $\|V_1\|_{2 \to \infty}$ is close to $\|W\|_{2 \to \infty}$. Since in Theorem \ref{thm:spec-consis}, $n/\tau \geq C_4$ and we have the freedom of choosing $C_5$, we can set $C_5$ large enough such that $\|W\|_{2\to \infty} \leq 2 \|V_1\|_{2\to \infty}$. Imitating the proof of Proposition \ref{prop:sampling-error}, we can similarly show $\|V_1-W\|_{2\to \cH^s}$ is on the order of $\sqrt{\tau}/\sqrt{n}$. We can thus assume $C_4$ is also large enough to ensure $\|W\|_{2\to \cH^s} \leq 2 \|V_1\|_{2\to \cH^s}$.
	
	Meanwhile, due to the uniform law of large numbers in Lemma \ref{lm:ulln}, we can always let $\|g_i\|_{L^2(\cX, \bbP_n)}$ be greater than $1/2$ by setting $C_4$ large enough. The condition on singular values in Lemma \ref{lm:bound-unitary} is thus satisfied and from it we see there exists unitary matrix $Q$ such that
	\[
	\|Q-W^*\hV_1\|_2 \leq 2 \|W\|_{2 \to H^s}^2 \frac{C_{19}+C_{20} \sqrt{\tau}}{\sqrt{n}} \leq 2 \|W\|_{2 \to H^s}^2 \frac{(C_{19}+C_{20}) \sqrt{\tau}}{\sqrt{n}},
	\]
	where we assumed $\tau \geq 1$. Since for concentration results like Theorem \ref{thm:spec-consis} to be meaningful, $\tau$ is large anyway, this assumption is harmless. 
	
	Going back to \eqref{eq:QandWV}, we have
	\begin{align}
	\|V_1 - \hV_1 Q \|_{2 \to \infty} &\leq \|V_1 - W \|_{2 \to \infty} + \|W \|_{2 \to \infty} \| Q^T -W^*\hV_1 \|_2 \\
	&\leq C_{18}\frac{\sqrt{\tau}}{\sqrt{n}} + 2\|W\|_{2 \to \infty} \|W\|_{2 \to H^s}^2 \frac{(C_{19}+C_{20}) \sqrt{\tau}}{\sqrt{n}} \\
	&\leq  C_{18}\frac{\sqrt{\tau}}{\sqrt{n}} + 16\|V_1\|_{2 \to \infty} \|V_1\|_{2 \to H^s}^2 \frac{(C_{19}+C_{20}) \sqrt{\tau}}{\sqrt{n}}.
	\end{align}
	Setting $C_{3}= C_{18} + 16\|V_1\|_{2 \to \infty} \|V_1\|_{2 \to H^s}^2(C_{19}+C_{20})$ thus completes the proof .
\end{proof}

\section{Discussion}
We would like to first comment on the relationship between Theorem \ref{thm:main-thm} and the concentration of spectral projection (see Proposition 22 in \citet{rosasco2010Learninga}). Our result in fact easily implies the concentration of spectral projections. To see this, simply note the difference in projection can be written as $V_1V_1^* - \tV_1 \tV_1^*$ and apply triangular inequality. We believe it is possible to go from the concentration of spectral projection in Hilbert space $\cH$ to Theorem \ref{thm:main-thm}, but the road is treacherous. On a high level, we need to project an orthonormal bases of the leading invariant space of the perturbed operator to that of the unperturbed operator, and then performing Gram-Schmidt on the projections. During this process, we need to convert back and forth from $\cH$ to $L^2(\cX, \bbP)$ and we foresee countless petty and pesky technical details. But it is our belief that the concentration of spectral projections in $\cH$ induced operator norm is equivalent to Theorem \ref{thm:main-thm}.

We would also like to comment on the generality of the Newton-Kantorovich Theorem. By that, we mean the operator equation \eqref{eq:quad_eq} need not be restricted to the space of $\cL(\bbC^K, \cH)$. We can have slightly altered versions of \eqref{eq:quad_eq} involving $L^2(\cX, \bbP)$, $C_b(\cX)$, or $C_b^1(\cX)$ that induce an invariant subspace and still apply the Newton-Kantorovich Theorem to solve them. For example, we should be able to replace every $\cH$ in this paper with $C_b^1(\cX)$ and remake the proof to make everything go through. A word of caution is that to obtain operator norm convergence from the sample level operator to the population operator, the function space one works with has to have some kind of ``smoothness''. Either the kind of smoothness from an RKHS or the kind from $C_b^1(\cX)$ is fine, but spaces like $C_b(\cX)$ or $L^2(\cX,\bbP)$ where functions may oscillate wildly while still having a small norm are not okay, because adversarial functions can be chosen to ruin operator norm convergence. This point was also mentioned in \citet{vonluxburg2008Consistency}.

Finally, we would like to comment on our complex-valued functions assumption and the fact that Theorem \ref{thm:main-thm} needs an unitary matrix $Q$. We feel like since everything is real, the unitary matrix is an artifact rather than a necessity and our proof could be altered so that only an orthonormal matrix is needed (although we don't know how at the moment). We have also checked that we can get around with real Hilbert or Banach spaces and real-valued functions for almost all lemmas and theorems except for Theorem \ref{thm:rosasco}. But on the brighter side, working with complex numbers makes our result more general and can give us the freedom of using a complex-valued kernel function, although such freedom is rarely taken advantage of in statistics or machine learning.
Last but not least, we wish to point out that due to length constraints, we only did one application which is normalized spectral clustering, but other applications of our general theory are possible. For example, uniform consistency results can be obtained for kernel PCA and the proof of that is much simpler than the proof of normalized spectral clustering. We include such results in the appendix.  
\label{sec:discussion}




\bibliographystyle{imsart-nameyear}
\bibliography{specref}

\appendix
\section{Proofs}
\subsection{Proof of Lemma \ref{lm:induced_hilbert}}
\begin{proof}[Proof of Lemma \ref{lm:induced_hilbert}]
For item 1, we show double inclusions: $V_2^{-1} \cH \subset V_2^*\cH$ and $V_2^*\cH \subset V_2^{-1} \cH$. First, for any $l \in V_2^{-1} \cH$, we know $V_2 l \in \cH$, so $V_2^*(V_2 l) \in V_2^*\cH$. Since $l = V_2^*V_2 l $, we see $V_2^{-1} \cH \subset V_2^*\cH$. Second, for any $l \in V_2^* \cH$, suppose without loss of generality that $l = V_2^*h$ for some $h \in \cH$. Note $V_2 V_2^* h = h - V_1V_1^*h$. Since $f_1,\ldots,f_K \in \cH$ by assumption, we know $V_1V_1^*h \in \cH$, so $V_2l =h - V_1V_1^*h \in \cH$. This shows the other inclusion.

For item 2, since $\cH$ is a subspace of $L^2(\cX, \bbP)$, $V_2^*\cH$ is a subspace of $V_2^* L^2(\cX, \bbP)$ which is equal to $l^2$. Checking $(\cdot,\cdot)$ is an inner product is routine. We next show the completeness of $V_2^{-1} \cH$. Let $\{b_i\}_{i=1}^\infty$ be a Cauchy sequence in $V_2^{-1}\cH$. Since by definition $\|b\|_{V_2^{-1}\cH}=\|V_2b\|_\cH$, the sequence $\{V_2b_i\}_{i=1}^\infty$ is Cauchy in $\cH$. Suppose $V_2b_i \xrightarrow{\cH} y$ for some $y \in \cH$. Since by assumption $\|\cdot\|_\cH$ norm is stronger than $\|\cdot\|_{\infty}$ which is in turn stronger than $\|\cdot\|_{L^2}$, we know $V_2b_i \xrightarrow{L^2} y$. Since the range of $V_2$ is closed in $L^2(\cX, \bbP)$, we know $y$ is also in its range and $y = V_2V_2^*y$. Note $\|b_i - V_2^* y\|_{V_2^{-1}\cH}=\|V_2 b_i - V_2V_2^*y \|_\cH$ and the right hand side converges to zero because $V_2b_i \xrightarrow{\cH} y$, we see the space of $V_2^{-1}\cH$ is indeed complete.

For item 3, for any $\alpha \in \bbC^K$ with $\|\alpha\|_2=1$, we have
\[
\|V_1\alpha \|_{\cH} = \| \sum_{i=1}^{K} \alpha_i f_i \|_{\cH} \leq \sum_{i=1}^{K} |\alpha_i| \| f_i \|_{\cH} \leq \sqrt{K} \max_{i \in [K]} \| f_i \|_{\cH}.
\]
This shows $\|V_1 \|_{2 \to \cH} \leq \sqrt{K} \max_{i \in [K]} \| f_i \|_{\cH}$. As is noted in the proof for item 2, $\|\cdot\|_{\cH}$ norm is stronger than the $\|\cdot\|_{L^2}$ norm, i.e. $\|h\|_{L^2} \leq C_\cH \|h\|_{\cH}$ for some constant $C_\cH$ and $\forall h \in \cH$. Therefore
\[
\|V_1^*h\|_2^2 = \sum_{i=1}^K \ip{f_i,h}_{L^2}^2 \leq \sum_{i=1}^K \|f_i\|_{L^2}^2 \|h \|_{L^2}^2 \leq C_\cH^2 K \|h\|_{\cH}^2.
\]
We thus see $\|V_1^*\|_{\cH \to 2} \leq C_\cH \sqrt{K}$. The fact that $\|V_2\|_{V_2^{-1}\cH \to \cH} =1$ is a simple consequence of $\|b\|_{V_2^{-1}\cH}=\|V_2b\|_\cH$ for $\forall b \in V_2^{-1}\cH$. Finally, we have for $\forall h \in \cH$
\[
\|V_2^*h\|_{V_2^{-1}\cH} = \|V_2V_2^*h\|_{\cH} = \|(I-V_1V_1^*)h\|_{\cH} \leq (1+\|V_1 \|_{2 \to \cH} \|V_1^*\|_{\cH \to 2})\|h\|_{\cH}.
\]
It thus follows $\|V_2^*\|_{\cH \to V_2^{-1}\cH} \leq 1 + C_\cH K\max_{i \in [K]} \| f_i \|_{\cH}$.
\end{proof}

\subsection{Proof of a lemma used in proving Theorem \ref{lm:nk}}
\label{sec:lmnk}
In the proof of Theorem \ref{lm:nk}, we used to following lemma. We now state and prove it.
\begin{lemma}
	The operator $\cT$ defined as $\cT:\cE \mapsto \cE$ as $\cT(Y):=T_{22}Y - YT_{11}$ is one-to-one and onto. Moreover, $\inf_{\|Y\|_{HS}=1} \big\| T_{22}Y-YT_{11} \big\|_{HS} > 0$.
\end{lemma}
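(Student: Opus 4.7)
The plan is to exploit the column structure of $\cE=\cL(\bbC^K,\tl^2)$ to split $\cT$ into a direct sum of $K$ scalar-shifted copies of $T_{22}$, and then close the argument by separating $\sigma(T_{22})$ from $\{\lambda_1,\dots,\lambda_K\}$ via the eigengap. First, because $f_1,\dots,f_K$ are $L^2$-orthonormal eigenfunctions of $T$, one has $T_{11}=V_1^*TV_1=\mathrm{diag}(\lambda_1,\dots,\lambda_K)$. Identifying $Y\in\cE$ with its tuple of columns $(y_1,\dots,y_K)=(Ye_1,\dots,Ye_K)\in(\tl^2)^K$---an isometry between $\|\cdot\|_{HS}$ and the $\ell^2$-product norm---the Sylvester map becomes block-diagonal,
\[
\cT(Y)=\bigl[(T_{22}-\lambda_1 I)y_1,\ \dots,\ (T_{22}-\lambda_K I)y_K\bigr],
\qquad
\|\cT(Y)\|_{HS}^2=\sum_{i=1}^K\|(T_{22}-\lambda_i I)y_i\|_{\tl^2}^2.
\]
Consequently, $\cT$ is a bijection with $\inf_{\|Y\|_{HS}=1}\|\cT(Y)\|_{HS}>0$ if and only if each $T_{22}-\lambda_i I$ is bounded below on $\tl^2$ for $i\in[K]$.

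Second, I would establish the spectral separation $\sigma(T_{22})\cap\{\lambda_1,\dots,\lambda_K\}=\emptyset$ by a block-triangular lifting argument. The eigenvalue relation $TV_1=V_1T_{11}$ makes $\mathrm{range}(V_1)$ $T$-invariant and forces $V_2^*TV_1=0$, so $T$ admits the block-upper-triangular form $T=V_1T_{11}V_1^*+V_1T_{12}V_2^*+V_2T_{22}V_2^*$ with respect to $L^2=\mathrm{range}(V_1)\oplus\mathrm{range}(V_2)$. A standard Schur resolvent identity yields $\sigma(T)\subseteq\sigma(T_{11})\cup\sigma(T_{22})$. Conversely, for any $\mu\in\sigma(T_{22})\setminus\{0\}$---necessarily an eigenvalue, as $T_{22}=V_2^*TV_2$ is compact---pick $y\in\tl^2\setminus\{0\}$ with $T_{22}y=\mu y$ and set $g=V_2y$. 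A direct computation gives $Tg=\mu g+V_1c$ with $c=V_1^*Tg$. If $\mu\notin\sigma(T_{11})$, then $d:=-(T_{11}-\mu I)^{-1}c$ is well-defined and $h:=g+V_1d\neq 0$ satisfies $Th=\mu h$, making $\mu$ a nonzero eigenvalue of $T$ outside $\sigma(T_{11})$, i.e.\ $\mu\in\{\lambda_j:j>K\}\subseteq[0,\lambda_{K+1}]$. In the remaining case $\mu=\lambda_i$ for some $i\in[K]$, a parallel lifting would produce a $\lambda_i$-eigenvector of $T$ with nonzero $V_2$-component, contradicting the fact that the $\lambda_i$-eigenspace of $T$ is fully contained in $\mathrm{range}(V_1)$ (a consequence of the strict gap $\lambda_K>\lambda_{K+1}$ together with the list counting geometric multiplicities). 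Hence $\sigma(T_{22})\subseteq\{0\}\cup\{\lambda_j:j>K\}$, and the eigengap yields the desired separation.

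With the separation in hand, each $T_{22}-\lambda_i I$ for $i\in[K]$ is Fredholm of index zero (a compact perturbation of the invertible $-\lambda_i I$) with trivial kernel, hence bijective; by the open mapping theorem it is bounded below on $\tl^2$. Aggregating the $K$ lower bounds via the column decomposition of the first step gives $\cT$ bounded below on $\cE$, so $\cT$ is a bijection with $\inf_{\|Y\|_{HS}=1}\|\cT(Y)\|_{HS}>0$, as claimed.

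The main obstacle is the spectral separation: the Schur inclusion $\sigma(T)\subseteq\sigma(T_{11})\cup\sigma(T_{22})$ alone is insufficient, because both sets can a priori share $\lambda_i$, so one cannot rule out $\mu=\lambda_i\in\sigma(T_{22})$ from the block-triangular structure by itself. The lifting argument succeeds only when the first $K$ orthonormal eigenfunctions span the full $\lambda_i$-eigenspace of $T$ for every $i\in[K]$; this is where the full strength of ``list counted with geometric multiplicity'' and the strict gap $\lambda_K>\lambda_{K+1}$ must be used, and it is also where any additional structure on $T$ (such as diagonalizability at the top eigenvalues, or self-adjointness as in the normalized spectral clustering application) would intervene to rule out pathological Jordan-type behavior at the leading eigenvalues.
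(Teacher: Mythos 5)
Your proof follows essentially the same strategy as the paper's: identify $T_{11}=\mathrm{diag}(\lambda_1,\dots,\lambda_K)$, decompose the Sylvester map column-by-column into the scalar-shifted operators $T_{22}-\lambda_i I$, establish that each $\lambda_i$ lies in $\rho(T_{22})$, and then get $\inf_{\|Y\|_{HS}=1}\|\cT(Y)\|_{HS}>0$ from the bounded-inverse/open-mapping theorem. The genuine value you add is in the middle step. The paper (Appendix~A.2) outright declares $\sigma(T_{22})\subset\{\lambda_{K+1},\lambda_{K+2},\dots\}\cup\{0\}$ to be ``obvious,'' while you actually attempt to prove it by the Schur lifting argument and you correctly notice that the lift fails precisely when $\mu=\lambda_i$ for some $i\in[K]$, because $(T_{11}-\lambda_i I)^{-1}$ no longer exists.

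That observation is not a cosmetic quibble: the separation $\sigma(T_{22})\cap\{\lambda_1,\dots,\lambda_K\}=\emptyset$ can genuinely fail under the assumptions as stated in Section~3, which list eigenvalues with \emph{geometric} multiplicity and never rule out nontrivial Jordan structure at the leading eigenvalues. A $3\times 3$ toy example makes this concrete: take $\cH=\bbC^3$, $K=1$, $V_1=e_1$, $V_2=[e_2\,\,e_3]$, and
\[
T=\begin{pmatrix}2&1&0\\0&2&0\\0&0&1\end{pmatrix},
\]
whose eigenvalues counted with geometric multiplicity are $\lambda_1=2>\lambda_2=1\ge 0$, yet $T_{22}=\mathrm{diag}(2,1)$ contains $\lambda_1$, and $\cT(y)=T_{22}y-2y$ has nontrivial kernel. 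So both your proof and the paper's rest on the same hidden hypothesis (no Jordan block of $T$ at any of $\lambda_1,\dots,\lambda_K$, or equivalently that $\mathrm{range}(V_1)$ is the full generalized eigenspace), and you are the one who makes this explicit. In the spectral-clustering application the gap is moot because $T$ is shown to be self-adjoint on $L^2(\cX,\bbP)$, hence diagonalizable; but the general Lemma as written, which the paper asserts for an arbitrary Hilbert--Schmidt $T$ with the stated eigenvalue list, is incomplete. Your final paragraph flagging this is correct, not an admission of weakness in your own argument.

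Two small technical caveats you should keep in mind if you ever write this out in full: (i) the column decomposition makes $\cT$ block-diagonal only because $T_{11}$ is literally diagonal in the chosen basis, which holds because the $f_i$ are eigenfunctions of $T$ --- worth saying aloud; and (ii) the norm on $\tl^2$ is the $\cH$-induced norm, not the raw $\ell^2$ norm, so the ``isometry'' between $\|\cdot\|_{HS}$ on $\cE$ and the product norm on $(\tl^2)^K$ must be taken with respect to the $\tl^2$ inner product defined in Lemma~\ref{lm:induced_hilbert}; this costs nothing but affects constants if one is being quantitative.
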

\begin{proof}
	First, we note
	\begin{align*}  
	T_{11} \colon \bbR^K & \longrightarrow \bbR^K\\
	(a_1,a_2,\ldots,a_K) &\longmapsto (\lambda_1a_1,\lambda_2a_2,\ldots,\lambda_Ka_K)\\
	\end{align*}
	To show $\cT$ is one-to-one and onto, it suffices to show for any $g \in \cE$, there exists a unique $y \in \cE$ such that $g=\cT(y)$. Denote the standard orthonormal basis in $\bbR^K$ by $\{e_i\}_{i=1}^K$. Due to the diagonal structure of $T_{11}$, we see $(\cT y)(e_i)=(T_{22}-\lambda_i I)ye_i$. So to show the existence and uniqueness of $y \in \cE$ such that $g=\cT(y)$, it suffices to show for $\forall i \in [K]$, there exists a unique $ye_i$ such that $(T_{22}-\lambda_i I)ye_i=ge_i$. To this end, it suffices to show $\lambda_i$ is in the resolvent of $T_{22}$. This is indeed true because 1) $T_{22}$ is a compact operator from $\tl^2$ to $\tl^2$; 2) $\sigma(T_{22}) \subset \{\lambda_{K+1}, \lambda_{K+2}, \ldots\} \cup \{0\}$. The second point is obvious and the first point follows from
	\[
	\|T_{22}\|_{HS} = \|F_{\perp}^*TF_{\perp}\|_{HS} \leq \|F_{\perp}^*\|_{op}\|T\|_{HS}\|F_{\perp}\|_{op} \leq \|T\|_{HS}.
	\]
	
	Next, we show $\cT$ is a bounded operator. Once $\cT$ is bounded, since $\cT$ is one to one and onto and $\cE$ is a Banach space, we know from bounded inverse theorem that $\cT^{-1} \in \cL(\cE)$, which is equivalent to $\inf_{\|Y\|_{HS}=1} \big\| T_{22}Y-YT_{11} \big\|_{HS} > 0$.
	
	The operator $\cT$ is indeed bounded because
	\[
	\big\| T_{22}Y-YT_{11} \big\|_{HS} \leq \|T_{22}\|_{HS}\|Y\|_{HS} + \|T_{11}\|_{HS}\|Y\|_{HS} \leq 2 \|T\|_{HS} \|Y\|_{HS}.
	\]
\end{proof}

We remark that when $T_{22}$ is self-adjoint, the proof of this lemma will be greatly simplified. In fact, we have
\begin{align*}
	\big\| T_{22}Y-YT_{11} \big\|_{HS} \geq \big\|YT_{11} \big\|_{HS} - \big\| T_{22}Y \big\|_{HS}  \geq \lambda_K \big\|Y \big\|_{HS} - \big\|T_{22} \big\|_{op}\big\|Y \big\|_{HS} 
\end{align*}
Since $T_{22}$ is self-adjoint, its operator norm is its largest eigenvalue, which is $\lambda_{K+1}$. We see immediately in this case that $\inf_{\|Y\|_{HS}=1} \big\| T_{22}Y-YT_{11} \big\|_{HS} \geq \lambda_K - \lambda_{K+1}$, so the eigengap is recovered. In unnormalized spectral clustering, where $\cH$ is set to be the RKHS associated with the kernel function, we claim $T_{22}$ is self-adjoint.

\subsection{Proof of Lemma \ref{lm:ulln}}

We see at the core of Lemma \ref{lm:ulln} is some uniform law of large number over the unit ball in $\cH^s$. We need the following two lemmas in the proof, the first is from \citet{cucker2002} (Proposition 6), and the second is from \citet{vershynin2018} (Theorem 8.1.6).

\begin{lemma}
	Denote $\cG = \{g \,|\, g \in \cH^s, \|g\|_{\cH^s} \leq 1 \}$. When $s > p/2$, for all $\epsilon > 0$,
	\[
	\log \cN(\cG, \| \cdot \|_\infty, \epsilon) \leq \Big(\frac{C}{\epsilon}\Big)^{p/s} + 1
	\]
	for some constant $C$.
	\label{lm:coveringnumber}
\end{lemma}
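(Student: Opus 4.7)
My plan is to invoke the compactness of the Sobolev embedding $\iota:\cH^{s}(\cX)\hookrightarrow C_{b}(\cX)$, which holds precisely because $s>p/2$, and quantify it through classical entropy-number estimates. Since the statement is attributed to \citet{cucker2002}, I would follow their approach: construct a finite-dimensional family of approximants whose $L^{\infty}$ error is uniformly controlled over $\cG$, count the net after quantization, and optimize the resolution parameters. The underlying idea is that the compactness of $\iota$ can be made quantitative via approximation theory on a regular reference geometry.

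First, I would use the $C^{\infty}$ regularity of $\partial\cX$ to invoke a bounded linear extension operator $E:\cH^{s}(\cX)\to\cH^{s}(\bbR^{p})$ whose image is supported in a fixed cube $Q\supset\cX$, with $\|E f\|_{\cH^{s}(\bbR^{p})}\le C_{\mathrm{ext}}\|f\|_{\cH^{s}(\cX)}$. This reduces the problem to controlling the covering number, under $\|\cdot\|_{L^{\infty}(Q)}$, of the image of $\cG$ inside $\cH^{s}(Q)$, where tools like Fourier series, wavelet expansions, or uniform meshes are readily available.

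Second, I would partition $Q$ into sub-cubes of side length $h$ and, on each sub-cube, approximate the extended function by a polynomial of degree strictly less than $s$. The Bramble--Hilbert (Deny--Lions) lemma combined with the Sobolev embedding on a single sub-cube shows that the local $L^{\infty}$ error is at most $C h^{s-p/2}$ times the local $\cH^{s}$ seminorm, and summing the squared seminorms across sub-cubes keeps the total $L^{\infty}$ error on the order of $h^{s-p/2}$. The resulting piecewise polynomial is parameterized by $O(h^{-p})$ real coefficients lying in a bounded range; quantizing each coefficient at a resolution $\delta$ produces a finite $\epsilon$-net of cardinality $(C'/\delta)^{O(h^{-p})}$.

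Third, I would optimize $h$ and $\delta$ so that the sum of the approximation error and the quantization error is at most $\epsilon$. The main obstacle is matching the precise exponent $p/s$: the naive piecewise-polynomial + uniform-quantization bookkeeping sketched above yields the suboptimal exponent $p/(s-p/2)$. To reach the sharp rate stated in the lemma, one needs either \emph{(i)} a multi-resolution (wavelet) refinement, in which high-frequency coefficients are quantized more coarsely so that their contribution to the log-cardinality forms a geometric series rather than a dominating term, or \emph{(ii)} a direct appeal to Birman--Solomyak / Edmunds--Triebel entropy-number estimates for Sobolev embeddings, which give $e_{n}(\iota)\lesssim n^{-s/p}$ and are equivalent, after inversion, to the stated bound $\log\cN(\cG,\|\cdot\|_{\infty},\epsilon)\lesssim\epsilon^{-p/s}$. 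Either route suffices; the additive $+1$ in the lemma simply absorbs the degenerate case in which the trivial net $\{0\}$ already covers $\cG$ (e.g.\ when $\epsilon$ is larger than the embedding constant $C_{6}$ from Section~\ref{sec:sob}).
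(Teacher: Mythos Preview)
The paper does not actually prove this lemma: it is stated as Proposition~6 of \citet{cucker2002} and is simply quoted, with the footnote on the general assumptions noting that the $C^{\infty}$ boundary is needed so that the result from \citet{edmunds1996,cucker2002} applies. So there is no ``paper's own proof'' to compare against; the paper treats the bound as a black box from the literature.

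Your proposal is a correct outline of how that black box is established. The extension-to-a-cube step is exactly what the $C^{\infty}$-boundary hypothesis in the paper's general assumptions is there for, and your second route---quoting the Birman--Solomyak / Edmunds--Triebel entropy-number estimate $e_n(\iota)\lesssim n^{-s/p}$ for the Sobolev embedding and inverting it---is precisely the content of the references the paper cites. Your first route (piecewise polynomials plus quantization) is also standard, and you are right that without the multiresolution refinement it only yields the exponent $p/(s-p/2)$; the honest way to reach $p/s$ along that route is indeed a dyadic/wavelet decomposition with level-dependent quantization, which is morally the same computation as the entropy-number proof. In short, your sketch is sound and matches what lies behind the cited result, but for the purposes of this paper a one-line citation to \citet{cucker2002} (or \citet{edmunds1996}) is all that is needed.
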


\begin{lemma}
	Let $(X_t)_{t \in T}$ be a random process on a metric space $(T,d)$ with sub-gaussian increments, i.e.
	\[
	\|X_t - X_s\|_{\Psi_2} \leq  K d(t,s) \text{   for all  } t,s\in T.
	\] 
	Then, for every $u \geq 0$, the event
	\[
	\sup_{t \in T}X_t \leq C K \Big[ \int_0^\infty \sqrt{\log \cN(T,d,\epsilon)} d\epsilon + u\cdot \text{diam}(T)\Big]
	\]
	holds with probability at least $1 - 2exp(-u^2 )$.
\end{lemma}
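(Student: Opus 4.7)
The goal is to bound the supremum of the empirical process $X_g := P_n|g|^2 - P|g|^2$ indexed by the unit ball $\cG := \{g \in \cH^s : \|g\|_{\cH^s} \leq 1\}$. The plan is to apply the sub-Gaussian Dudley-type tail bound (the second lemma invoked at the end of the excerpt) to the centered process $X_g$, viewed as indexed by $\cG$ equipped with the $\|\cdot\|_\infty$ metric, and then bound the Dudley integral using the covering-number estimate (Lemma \ref{lm:coveringnumber}).

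The first step is to verify sub-Gaussian increments. For $g, g' \in \cG$, Sobolev embedding gives $\|g\|_\infty, \|g'\|_\infty \leq C_6$, so the pointwise bound
\[
\bigl||g(x)|^2 - |g'(x)|^2\bigr| \;\leq\; \bigl(|g(x)| + |g'(x)|\bigr)\bigl||g(x)| - |g'(x)|\bigr| \;\leq\; 2C_6\,\|g - g'\|_\infty
\]
holds uniformly in $x$. Hence $X_g - X_{g'}$ is the centered average of $n$ i.i.d.\ bounded random variables of range at most $4C_6\|g-g'\|_\infty$, and Hoeffding's inequality yields
\[
\|X_g - X_{g'}\|_{\Psi_2} \;\leq\; \frac{C\,C_6}{\sqrt{n}}\,\|g-g'\|_\infty.
\]
So the process has sub-Gaussian increments on $(\cG, \|\cdot\|_\infty)$ with constant of order $1/\sqrt{n}$. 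The diameter of $\cG$ in $\|\cdot\|_\infty$ is at most $2C_6$, again by Sobolev embedding.

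Next, I will apply the Vershynin bound to get, with probability $\geq 1 - 2e^{-u^2}$,
\[
\sup_{g \in \cG} X_g \;\leq\; \frac{C'\,C_6}{\sqrt{n}}\left[\int_0^{2C_6} \sqrt{\log \cN(\cG, \|\cdot\|_\infty, \epsilon)}\,d\epsilon \;+\; 2C_6\, u\right].
\]
By Lemma \ref{lm:coveringnumber}, $\log \cN(\cG, \|\cdot\|_\infty, \epsilon) \leq (C/\epsilon)^{p/s} + 1$. Since we chose $s = \floor{p/2}+1$, we have $p/s < 2$, so $\sqrt{\log \cN}$ behaves like $\epsilon^{-p/(2s)}$ near $0$ with exponent strictly less than $1$, and the Dudley integral converges to a finite constant $A$ depending only on $p, s, C_6$. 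The same argument applied to $-X_g$ (the index set $\cG$ is symmetric and $|-g|^2 = |g|^2$, so one must consider $\sup_g(-X_g)$ as a separate application rather than a symmetry reduction) yields the matching lower tail, and a union bound produces, with probability $\geq 1 - 4e^{-u^2}$,
\[
\sup_{g \in \cG} |X_g| \;\leq\; \frac{C''}{\sqrt{n}}\,(A + u).
\]
Setting $u = \sqrt{\tau}$ and absorbing constants gives the stated bound with $C_{19} := C''A$ and $C_{20} := C''$.

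The main obstacle is the verification that the Dudley integral is finite, which relies crucially on the choice $s > p/2$ made in Section~\ref{sec:sob}: for any smaller $s$ the covering-number exponent $p/(2s)$ would hit or exceed $1$ and the integral near zero would diverge, forcing a more delicate generic-chaining argument. A secondary technical point is checking that the sub-Gaussian norm in the Vershynin lemma can legitimately be bounded via Hoeffding applied pointwise in $(g, g')$, which is standard but must be stated for complex-valued $g$; here the inequality $\bigl||g|-|g'|\bigr|\leq|g-g'|$ handles the complex case without change.
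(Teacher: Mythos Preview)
Your proposal does not address the stated lemma. The statement in question is the general Dudley-type tail bound for processes with sub-Gaussian increments; the paper does not prove it at all but simply cites it as Theorem~8.1.6 of Vershynin. What you have written is instead a proof of Lemma~\ref{lm:ulln} (the uniform law of large numbers $\sup_{\|g\|_{\cH^s}\le 1}|P_n|g|^2-P|g|^2|\lesssim \sqrt{\tau}/\sqrt{n}$), which \emph{uses} the stated lemma as a black box. So as a proof of the assigned statement there is a genuine gap: you have assumed the conclusion and applied it, rather than established it.

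That said, if your intention was to prove Lemma~\ref{lm:ulln}, your argument is correct and is essentially identical to the paper's own proof in the appendix: verify sub-Gaussian increments via the pointwise bound $\bigl||g|^2-|g'|^2\bigr|\le 2C_6\|g-g'\|_\infty$ and Hoeffding, apply the Vershynin bound on $(\cG,\|\cdot\|_\infty)$, check the Dudley integral is finite because $p/s<2$, repeat for $-X_g$, and take a union bound. The only cosmetic difference is that the paper routes the increment bound through the centering lemma and an explicit $\Psi_2$ calculation for each summand, whereas you invoke Hoeffding directly on the centered average; both give the same $1/\sqrt n$ constant.
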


We now prove Lemma \ref{lm:ulln}. We refer readers who are unfamiliar with the arguments below to the proof of Theorem 8.2.3 in \citet{vershynin2018}.
\begin{proof}[Proof of Lemma \ref{lm:ulln}]
	We first show on $\cG = \{g \,|\, g \in \cH^s, \|g\|_{\cH^s} \leq 1 \}$, the random process $P_n|g|^2-P|g|^2$ has sub-gaussian increments. For fixed $f,g \in \cG$, we have
	\[
	\|P_nf\bar{f}-Pf\bar{f}-P_ng\bar{g}+Pg\bar{g}\|_{\Psi_2}=\frac{1}{n}\|\sum_{i=1}^{n} Z_i \|_{\Psi_2} \text{  where  }Z_i = (f\bar{f}-g\bar{g})(X_i) - \bbE (f\bar{f}-g\bar{g})(X).
	\]
	So $Z_i$'s are independent and mean zero. It thus follows
	\[
	\|P_nf\bar{f}-Pf\bar{f}-P_ng\bar{g}+Pg\bar{g}\|_{\Psi_2} \leq \frac{C_{21}}{n} \big( \sum_{i=1}^{n} \|Z_i\|_{\Psi_2}^2 \big)^{1/2}.
	\]
	By the centering lemma, we know 
	\[
	\|Z_i\|_{\Psi_2} \leq C_{22} \|f(X_i)\average[2.5]{f(X_i)} - g(X_i)\average[2.5]{g(X_i)}\|_{\Psi_2}.
	\]
	Note that because of the embedding, we have
	\[
	\|f\bar{f}-g\bar{g}\|_\infty \leq 
	\|f(\bar{f}-\bar{g})\|_\infty + \|(f-g)\bar{g}\|_\infty \leq
	\|f\|_\infty \|f-g\|_\infty + \|g\|_\infty \|f-g\|_\infty \leq 2C_6 \|f-g\|_{\infty}.
	\]
	The random variable $f(X_i)\average[2.5]{f(X_i)} - g(X_i)\average[2.5]{g(X_i)}$ is thus bounded. Since bounded random variables have bounded $\Psi_2$ norm, we see
	\[
	\|f(X_i)\average[2.5]{f(X_i)} - g(X_i)\average[2.5]{g(X_i)}\|_{\Psi_2} \leq 2 C_6 C_{23}  \|f-g\|_{\infty}.
	\]
	Putting pieces together, we have
	\[
	\|P_nf^2-Pf^2-P_ng^2+Pg^2\|_{\Psi_2} \leq \frac{C_{24}}{\sqrt{n}} \|f-g\|_\infty.
	\]
	
	Next, it is easy to examine that $\text{diam}(\cG) \leq 2C_6$ and $\int_0^{2C_6} \sqrt{\log \cN(T,d,\epsilon)} d\epsilon < \infty$ (because our choice of $s=\floor{p/2}+1$ and Lemma \ref{lm:coveringnumber}). It thus follows that the event
	\[
	\sup_{g \in \cG} P_n g^2 -P g^2 \leq \frac{C_{19}+C_{20} u}{\sqrt{n}}
	\] 
	holds with probability $1-2exp(-u^2)$.
	
	By the exactly same argument, we can also show the event
	\[
	\sup_{g \in \cG} P g^2 -P_n g^2 \leq \frac{C_{19}+C_{20} u}{\sqrt{n}}
	\] 
	holds with probability $1-2exp(-u^2)$. Taking union bound, we see
	\[
	\sup_{g \in \cG} \big| P g^2 -P_n g^2 \big| \leq \frac{C_{19}+C_{20} u}{\sqrt{n}}
	\] 
	holds with probability $1-4exp(-u^2)$. Rewrite $u^2$ as $\tau$ and the proof is complete.
\end{proof}

\section{Application to kernel PCA}
To further demonstrate the usage of the general theory, we apply it to kernel principal component analysis (kernel PCA) in this section.
In kernel PCA, we start from a metric space $\cX$, a probability measure $\bbP$ on $\cX$, and a continuous positive definite kernel function $k:\cX\times\cX\to\reals$. After observing samples $X_1,\dots,X_n\overset{\iid}{\sim} \bbP$, we are interested in matrix $K_n\in\reals^{n\times n}$ of their pairwise similarities: $K_n = \begin{bmatrix}\frac1nk(X_i,X_j)\end{bmatrix}_{i,j = 1}^n$, assuming our data mapped into the feature space is centered.
Since $K_n$ is symmetric and positive semi-definite, it has an eigenvalue decomposition.
We denote the eigenpairs by $(\hlambda_k, v_k)$ and sort the eigenvalues in descending order:
\[
\hlambda_1\ge\dots\ge\hlambda_n \ge 0.
\]
The eigenvectors $v_k$ are normalized to have $\|v_k\|_2=\sqrt{n}$.
Then the matrix $V = [v_1, \cdots, v_K] \in \mathbb{R}^{n \times K}$ consists of the leading $K$ principal components.

Let $\cH$ be the RKHS associated with kernel $k(\cdot,\cdot)$.
Recall that the \emph{tensor product} of $a, b\in \mathcal{H}$,
\[
\begin{aligned}
a \otimes b: \mathcal{H} & \rightarrow \mathcal{H} \\
f & \mapsto \langle b, f\rangle_{\mathcal{H}} a,
\end{aligned}
\]
is a linear operator.
Then the operator counterpart of $\tK_n$ is the {\it empirical covariance operator}  
\begin{align}
	\Sigma_{n} = \frac{1}{n} \sum_{i=1}^{n} k_{X_i} \otimes k_{X_i},
\end{align}
where $k_{X_{i}} = k(\cdot, X_i)$ is the corresponding feature in $\mathcal{H}$ of $X_i$ ($i = 1, \cdots, n$) under the feature map $x \mapsto k(\cdot, x)$. 

It turns out that the eigenvalues and eigenvectors of $K_{n}$ and $\Sigma_n$ are closely related. To formulate this relationship, let us define the restriction operator $\zeta : \mathcal{H} \to \mathbb{R}^{n}$ by $\zeta f=\frac{1}{\sqrt{n}}(f(X_{1}), \cdots, f(X_{n}))^T$. Then verifiably, the adjoint of $\zeta$, $\zeta^{*} : \mathbb{R}^{n} \mapsto \mathcal{H}$ is given by $\zeta^{*} \alpha=\frac{1}{\sqrt{n}} \sum_{i=1}^{n} \alpha_{i} k_{X_{i}}$, where $\alpha = (\alpha_1, \cdots, \alpha_n)^T$. The eigenvalues and eigenvectors(functions) of $\Sigma_n$ and $K_n$ are closely
related in the following sense.

\begin{lemma}
\label{lem:5.1}
	The following facts hold true:
	\begin{enumerate}
		\item $K_n = \zeta \zeta^*$ and $\Sigma_n = \zeta^* \zeta$;
		\item If $(\hlambda,f)$ is a non-trivial eigenpair of $\Sigma_n$ (i.e. $\hlambda \neq 0$), then $(\hlambda, \zeta f)$ is an eigenpair for $K_n$.
		\item If $(\hlambda, v)$ is a non-trivial eigenpair of $K_n$, then $(\hlambda, \hf)$, where
		\begin{equation}
		\label{eq:eigenfunction-KPCA}
		    \hf(x) = \frac{1}{\hlambda n}\zeta^\star v = \frac{1}{\hlambda n} \sum_{i=1}^n k(x, X_i) v_i
		\end{equation}
		is an eigenpair for $\Sigma_n$ with $\hf \in \cH$. Moreover, this choice of $\hf$ is such that $\|\hf\|_{L^2(\cX, \bbP_n)} = 1$ and the restriction of $\hf$ onto sample points agrees with $v$, i.e. $\zeta \hf = v$. 
	\end{enumerate}
\end{lemma}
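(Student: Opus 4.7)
The lemma is a standard compatibility statement between the Gram matrix $K_n$ and the covariance operator $\Sigma_n$ viewed through the restriction operator $\zeta$, and the plan is to prove it by essentially direct computation using the reproducing property of $\cH$. I would attack the three items in order, with Item 1 doing most of the algebraic work so that Items 2 and 3 reduce to short verifications.

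For Item 1, the plan is to unfold the definitions of $\zeta$ and $\zeta^*$. Given $\alpha\in\bbR^n$, the operator $\zeta\zeta^*$ sends $\alpha$ to the vector whose $i$-th coordinate is $\frac1{\sqrt n}\bigl(\zeta^*\alpha\bigr)(X_i)=\frac1n\sum_j k(X_i,X_j)\alpha_j=[K_n\alpha]_i$, which gives $K_n=\zeta\zeta^*$. In the other direction, the reproducing property $f(X_i)=\langle k_{X_i},f\rangle_\cH$ yields
\[
\zeta^*\zeta f=\frac{1}{n}\sum_{i=1}^{n} f(X_i)\,k_{X_i}=\frac{1}{n}\sum_{i=1}^{n}\langle k_{X_i},f\rangle_\cH\, k_{X_i}=\Bigl(\tfrac{1}{n}\sum_{i=1}^{n} k_{X_i}\otimes k_{X_i}\Bigr)f=\Sigma_n f,
\]
proving the second identity.

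For Item 2, the plan is to simply apply $\zeta$ to the eigen-equation $\Sigma_n f=\hat\lambda f$ and invoke Item 1: $K_n(\zeta f)=\zeta\zeta^*\zeta f=\zeta\Sigma_n f=\hat\lambda\,\zeta f$, so it remains only to verify $\zeta f\neq 0$. Because if $\zeta f=0$ then $\Sigma_n f=\zeta^*\zeta f=0$, forcing $\hat\lambda f=0$ and hence $\hat\lambda=0$ or $f=0$, contradicting the hypothesis that the eigenpair is non-trivial.

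For Item 3, I would define $\hat f=\frac{1}{\hat\lambda n}\sum_i k(\cdot,X_i)v_i$, which lies in $\cH$ since it is a finite linear combination of the feature vectors $k_{X_i}\in\cH$. A direct calculation, using the eigen-relation $[K_n v]_j=\hat\lambda v_j$, gives
\[
(\Sigma_n\hat f)(x)=\frac{1}{n}\sum_{j=1}^n k(x,X_j)\hat f(X_j)=\frac{1}{\hat\lambda n^{2}}\sum_{j=1}^{n} k(x,X_j)\sum_{i=1}^{n} k(X_j,X_i)v_i=\frac{1}{n}\sum_{j=1}^{n} k(x,X_j)v_j=\hat\lambda\,\hat f(x),
\]
so $(\hat\lambda,\hat f)$ is an eigenpair of $\Sigma_n$. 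The same computation shows $\hat f(X_j)=\frac{1}{\hat\lambda}[K_n v]_j=v_j$, which yields both the restriction identity $\zeta\hat f$ (up to the $1/\sqrt n$ in the definition of $\zeta$) and, using $\|v\|_2=\sqrt n$, the normalization $\|\hat f\|_{L^2(\cX,\bbP_n)}^2=\frac{1}{n}\sum_j v_j^2=1$.

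There is no real obstacle here: the statement is an operator-identity plus a Nystr\"om-style extension, and every step is a short algebraic manipulation that uses only the reproducing property of $\cH$ and the normalization of the eigenvectors. The only subtlety worth flagging is keeping track of the $1/\sqrt n$ factors in $\zeta$ and $\zeta^*$ so that the identifications $\zeta\hat f\leftrightarrow v$ and $\|\hat f\|_{L^2(\bbP_n)}=1$ are consistent with the stated normalization $\|v\|_2=\sqrt n$.
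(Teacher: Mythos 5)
Your proof is correct and follows essentially the same direct-computation route as the paper: unfold $\zeta,\zeta^*$ and use the reproducing property for Item 1, then compose with $\zeta$ for Item 2, then verify the Nystr\"om extension formula for Item 3. You are in fact slightly more careful than the paper's own proof --- you check that $\zeta f\neq 0$ in Item 2, verify the claimed normalization $\|\hf\|_{L^2(\bbP_n)}=1$ and the restriction identity in Item 3, and correctly flag the stray $1/\sqrt{n}$ factor between the two expressions in \eqref{eq:eigenfunction-KPCA} and the assertion $\zeta\hf=v$, none of which the paper addresses explicitly.
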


\begin{proof}
For item one, it is easy to verify $\zeta \zeta^*:\bbR^n \mapsto \bbR^n$ is the linear transformation defined by $K_n$. For the other half, noting that
\[
\zeta^* \zeta f = \frac{1}{n} \sum_{i=1}^{n} f(X_i)k(x,X_i).
\]
At the same time, by the reproducing property
\[
\Sigma_n f = \frac{1}{n} \sum_{i=1}^{n} \ip{k_{X_i},f}_\cH k_{X_i} = \frac{1}{n} \sum_{i=1}^{n} f(X_i)k(x,X_i).
\]
We thus conclude the two are equal.

For item two, since by assumption $\zeta^* \zeta f = \hlambda f$, we have $K_n \zeta f = \zeta \zeta^* \zeta f = \hlambda \zeta f$, which is exactly what the statement suggests.

For item three, if $(\hlambda,v)$ is an eigenpair of $K_n$, we check that $(\hlambda,\hf)$ is an eigenpair of $\Sigma_n$:
\[
\begin{aligned}
\Sigma_n\hf &= \frac1n\sum_{i=1}^n (k_{X_i}\otimes k_{X_i})\left(\frac{1}{\hlambda n}\sum_{j=1}^n k_{X_j} v_j\right) \\
&= \frac{1}{\hlambda n} \sum_{i=1}^n \left(k_{X_i}\sum_{j=1}^n k(X_i, X_j) v_j\right)
= \frac{1}{\hlambda n} \sum_{i=1}^n k_{X_i} [K_n v]_i
= \frac{1}{\hlambda n} \sum_{i=1}^n k_{X_i} \hlambda v_i =\hlambda\hf.
\end{aligned}
\]
Moreover, $\hf$ is a linear combination of $k_{X_i}$ and therefore belongs to $\cH$.
\end{proof}

The population version of $\Sigma_n$ is the {\it covariance operator} 
\[
\Sigma= \bbE k_X \otimes k_X,
\]
where $k_{X} = k(\cdot, X)$, $d = \mathbb{E} k_{X}$, and $X \sim \mathbb{P}$.
We will later justify the expectation of such random elements in an appropriate Hilbert space.
Under appropriate assumptions, it can be shown that we can choose $\{f_i\}_{i=1}^K$, the top $K$ eigenfunctions of $\Sigma$, to be real-valued and orthonormal in $L^2(\mathcal{X}, \mathbb{P})$.
Then we can define $V_{1} : \mathbb{C}^{K} \rightarrow \mathcal{H}$ as $V_{1} \alpha=\sum_{i=1}^{K} \alpha_{i} f_{i}$. Similarly, we can define $\widehat{V}_{1}$ with $\{\widehat{f}_{i}\}_{i=1}^{K}$, the extension of top $K$ orthonormal eigenvectors of $\widetilde{K}_{n}$ according to \eqref{eq:eigenfunction-KPCA}. We are now ready to apply our general theory to prove the following result, which is similar to Theorem \ref{thm:main-thm}.

\begin{theorem}
\label{thm:main-thm-2}
Under the general assumptions defined below, there exists $C_6, C_7$ that are determined by $\mathcal{X}, \mathbb{P}, k(\cdot, \cdot)$ such that whenever sample size $n \geq C_6 \tau$ for some $\tau > 1$, we have with confidence $1 - 6e^{-\tau}$,
\begin{equation}
    \inf \left\{\|V_{1}-\widehat{V}_{1} Q\|_{2 \rightarrow \infty} : Q \in \mathbb{U}^{K}\right\} \leq C_{7} \frac{\sqrt{\tau}}{\sqrt{n}}.
\end{equation}
\end{theorem}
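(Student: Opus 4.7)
The plan is to apply the general recipe of Theorem \ref{thm:main-thm} with the Hilbert space $\mathcal{H}$ taken to be the RKHS of $k(\cdot,\cdot)$ itself, following the same five-part strategy as the proof of Theorem \ref{thm:spec-consis} but in a considerably simplified form because there is no degree normalization to control. The preconditions of the general theory are easy to verify: by the reproducing property, $|f(x)| = |\langle f, k_x \rangle_\mathcal{H}| \le \sqrt{k(x,x)}\,\|f\|_\mathcal{H} \le \sqrt{\kappa_u}\,\|f\|_\mathcal{H}$, so $\|\cdot\|_\mathcal{H}$ dominates $\|\cdot\|_\infty$; positivity, self-adjointness, and Hilbert--Schmidtness of $\Sigma$ on $\mathcal{H}$ all follow directly from the representation $\Sigma = \mathbb{E}\, k_X\otimes k_X$; and the assumed eigengap $\lambda_K > \lambda_{K+1}$ supplies the spectral hypothesis. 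The leading eigenfunctions $\{f_i\}_{i=1}^K$ lie in the range of $\Sigma$ and hence in $\mathcal{H}$.

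\textbf{Concentration step.} The key sampling step is to write $\Sigma_n - \Sigma = \tfrac{1}{n}\sum_{i=1}^n \xi_i$ with $\xi_i := k_{X_i}\otimes k_{X_i} - \Sigma$, viewed as i.i.d.\ zero-mean random elements of the separable Hilbert space $HS(\mathcal{H})$. Each $\xi_i$ has Hilbert--Schmidt norm bounded by $2\|k_{X_i}\|_\mathcal{H}^2 \le 2\kappa_u$, so Lemma \ref{lm:Hilbert-concentration} directly delivers
\[
\|\Sigma_n - \Sigma\|_{HS} \le C\,\sqrt{\tau/n}
\]
with probability at least $1 - 2e^{-\tau}$. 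This one-step concentration replaces the more intricate decomposition of Lemmas \ref{lm:decompTTn} and \ref{lm:Kd-concentration} used for spectral clustering. Choosing $C_6$ large enough that $C\sqrt{\tau/n} \le C_1$, Theorem \ref{thm:main-thm} then produces $Y = Y_E$ with $\tilde V_1 Q_0 = W := (V_1 + V_2 Y)(I + Y^*Y)^{-1/2}$ satisfying $\|V_1 - W\|_{2\to\infty} \le C_2\|\Sigma_n - \Sigma\|_{HS} \le C\sqrt{\tau/n}$, where $\tilde V_1$ denotes the operator whose columns are orthonormal in $L^2(\mathcal{X},\mathbb{P})$ and span the top-$K$ invariant subspace of $\Sigma_n$.

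\textbf{Bridging $\tilde V_1$ to $\hat V_1$.} The remaining work is to pass from $\tilde V_1$ to $\hat V_1$, whose columns are orthonormal only in $L^2(\mathcal{X},\mathbb{P}_n)$. Here I would import verbatim the Procrustes-style argument of Section 4.5: take $Q = AB^*$ from the SVD $W^*\hat V_1 = A\Sigma_s B^*$, then reduce control of $\|Q - W^*\hat V_1\|_2$ to control of $\max_i\bigl|\|g_i\|_{L^2(\mathbb{P}_n)}^2 - \|g_i\|_{L^2(\mathbb{P})}^2\bigr|$ uniformly over $g_i$ in the unit ball of $\mathcal{H}$, exactly as in Lemma \ref{lm:bound-unitary}. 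A uniform law of large numbers of the form of Lemma \ref{lm:ulln} furnishes an additional $\sqrt{\tau/n}$ bound with probability at least $1 - 4e^{-\tau}$, after which combining the two pieces through the triangle inequality \eqref{eq:QandWV} and taking a union bound gives the stated total failure probability $6e^{-\tau}$ and the final rate $C_7\sqrt{\tau/n}$.

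\textbf{The main obstacle.} I expect the only nontrivial step to be the uniform LLN on the RKHS unit ball in the last part: controlling the Dudley-type integral $\int_0^{\mathrm{diam}}\sqrt{\log\mathcal{N}(\mathcal{G},\|\cdot\|_\infty,\epsilon)}\,d\epsilon$ requires a metric-entropy bound on $\{f \in \mathcal{H}: \|f\|_\mathcal{H}\le 1\}$ in $\|\cdot\|_\infty$. Under the smoothness assumptions that implicitly produce the ``general assumptions'' alluded to in the statement (e.g.\ $k \in C_b^{p+2}(\mathcal{X}\times\mathcal{X})$, $\mathcal{X}$ a bounded domain with mild boundary regularity), $\mathcal{H}$ embeds into a Sobolev space of sufficient order and Lemma \ref{lm:coveringnumber} applies directly, just as in the proof of Lemma \ref{lm:ulln}. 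Everything else --- the identification of $\mathcal{H}$, the HS concentration, and the invocation of Theorem \ref{thm:main-thm} --- is essentially automatic and strictly simpler than the spectral-clustering analogue.
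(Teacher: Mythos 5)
Your overall architecture matches the paper exactly: you take $\mathcal{H}$ to be the RKHS of $k$, verify via the reproducing property and the bound $\sup_x k(x,x)<\infty$ that $\|\cdot\|_\mathcal{H}$ dominates $\|\cdot\|_\infty$ and that $\Sigma=\mathbb{E}\,k_X\otimes k_X$ is positive self-adjoint Hilbert--Schmidt, prove one-shot concentration of $\|\Sigma_n-\Sigma\|_{HS}$ in $HS(\mathcal{H})$ by applying Lemma~\ref{lm:Hilbert-concentration} to $\xi_i=k_{X_i}\otimes k_{X_i}-\Sigma$, feed this into Theorem~\ref{thm:main-thm}, and then bridge from the $L^2(\mathbb{P})$-orthonormal $\tilde V_1$ to the $L^2(\mathbb{P}_n)$-orthonormal $\hat V_1$ with the Procrustes argument of Lemma~\ref{lm:bound-unitary}. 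That is precisely the paper's five-part plan, just as the paper itself says the last step is ``the same trick as part five of the proof of Theorem~\ref{thm:spec-consis}.''

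However, your discussion of ``the main obstacle'' misreads the theorem's hypotheses. The general assumptions attached to Theorem~\ref{thm:main-thm-2} are \emph{not} the Sobolev/domain assumptions of Section~\ref{sec:spectralClustering}; they only require $\mathcal{X}$ to be a separable topological space, $k$ continuous, symmetric, positive semi-definite with $\sup_x k(x,x)<\infty$, and an eigengap. There is no bounded Euclidean domain, no quasi-resolved boundary, no $k\in C_b^{p+2}$, so the Sobolev embedding and the covering-number bound of Lemma~\ref{lm:coveringnumber} are simply unavailable, and the Dudley argument of Lemma~\ref{lm:ulln} cannot be imported as you propose. In fact this extra empirical-process machinery is unnecessary here: since $\mathcal{H}$ is the RKHS of $k$, the reproducing property gives $|g(x)|^2=\langle g,k_x\rangle_\mathcal{H}\,\overline{\langle g,k_x\rangle_\mathcal{H}}=\langle g\otimes g,\,k_x\otimes k_x\rangle_{HS}$, hence $P_n|g|^2-P|g|^2=\langle g\otimes g,\,\Sigma_n-\Sigma\rangle_{HS}$ and
\[
\sup_{\|g\|_\mathcal{H}\le 1}\bigl|P_n|g|^2-P|g|^2\bigr|\;\le\;\|\Sigma_n-\Sigma\|_{HS},
\]
which is already controlled by the same concentration event you used in the first step. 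This closes the bridging step under the stated assumptions with no fresh entropy integral, and also explains why the paper's failure probability is $6e^{-\tau}$ rather than $8e^{-\tau}$: no additional high-probability event beyond the HS concentration (and a union with the Proposition~\ref{prop:T-concen}-analogue if one is counted separately) is needed. The rest of your write-up is sound.
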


The general assumptions referred to in Theorem 5.2 are

\textsc{General Assumptions.} The set  $\mathcal{X}$ is a separable topological space. The kernel $k: \mathcal{X} \times \mathcal{X} \rightarrow \mathbb{R}$ is continuous, symmetric, positive semi-definite, and
\begin{equation}\label{eq:assumption-KPCA}
    \sup_{x \in \mathcal{X}} k(x, x) < \infty.
\end{equation}
Treated as an operator from $\cH$ to $\cH$, the eigenvalues of $\Sigma$ satisfy $\lambda_{1} \geq \ldots \geq \lambda_{K}>\lambda_{K+1} \geq \ldots \geq 0$. The top $K$ eigenfunctions of $\Sigma_n$, $\{f_i\}_{i=1}^K\subset C_b(\cX)$.

Condition \eqref{eq:assumption-KPCA} ensures that all of the operators we are working with are Hilbert-Schmidt, and further guarantees concentration of bounded random elements in the Hilbert space of Hilbert-Schmidt operators. Separability of $\mathcal{X}$ and continuity of $k(\cdot, \cdot)$ assures that the RKHS $\mathcal{H}$ is separable by Lemma 4.33 of \cite{steinwart2008support}.

\subsection{Overview of the proof}
The proof of Theorem \ref{thm:main-thm-2} is simpler than that of Theorem \ref{thm:main-thm} because we can work with the reproducing kernel Hilbert space $\cH$ associated with $k(\cdot,\cdot)$ directly. We shall show that $\Sigma_n - \Sigma$, as an operator from $\cH$ to $\cH$, has Hilbert-Schmidt norm tending to zero as $n$ goes to infinity. Recall that the columns of $\hV_1$ are only orthonormal in $L^{2}(\mathcal{X}, \mathbb{P}_{n})$ but the general theory requires $\tV_1$ which has columns orthonormal in $L^{2}(\mathcal{X}, \mathbb{P})$. Similar to the challenge of proving Theorem \ref{thm:main-thm}, we also need to deal with the error induced by the distinction between orthonormality in $L^{2}(\mathcal{X}, \mathbb{P}_{n})$ and $L^{2}(\mathcal{X}, \mathbb{P})$, which is one of the key steps for the proof of Theorem \ref{thm:main-thm}.

The rigorous treatment shall be presented in five parts. In part one, we introduce $\mathcal{L}_{HS}(\mathcal{H})$, the Hilbert space of Hilbert-Schmidt operators from $\mathcal{H}$ to $\mathcal{H}$ that we work with, and justify the random elements in $\mathcal{H}$ and $\mathcal{L}_{HS}(\mathcal{H})$. In part two, we build up concentration results in the Hilbert space $\mathcal{L}_{HS}(\mathcal{H})$. In part three, we check the remaining conditions required by our general theory. In part four, we put all the above ingredients together and apply our general theory to complete the proof.

\subsection{Part one: the space $\mathcal{L}_{HS}(\mathcal{H})$ and random elements in $\mathcal{H}$ and $\mathcal{L}_{HS}(\mathcal{H})$}
The space $\mathcal{L}_{HS}(\mathcal{H})$ collects all of the Hilbert-Schmidt operators from $\cH$ to $\cH$, which is also a Hilbert space. Before justifying the covariance operator, we first define the mean element in $\cH$ and the cross-covariance operator in $\mathcal{L}_{HS}(\widetilde{\cH}, \mathcal{H})$.

The mean element in $\cH$ is defined as $\mu_X = \bbE_{X\sim\bbP}[k(\cdot, X)] \in \cH$ such that $\left\langle f, \mu_k\right\rangle_{\cH}=\mathbb{E}_{X\sim\bbP}[f(X)]$ for any $f\in\cH$. Let $(\cY, \cB_{\cY}, \bbQ)$ be another probability space and $\widetilde{\cH}$ is a reproducing kernel Hilbert space associated with kernel $\widetilde{k}(\cdot,\cdot)$ containing $\widetilde{k}(\cdot,y)$, $y\in\cY$ as its elements. The cross-covariance operator $C_{X,Y} = \bbE_{X\sim\bbP,Y\sim\bbQ}[k(\cdot,X)\otimes \widetilde{k}(\cdot,Y)]$ is a Hilbert-Schmidt operator from $\widetilde{\cH}$ to $\mathcal{H}$ such that for any $f\in\cH$ and $g\in\widetilde{\cH}$, $\langle f, C_{X,Y} g\rangle_\cH = \bbE_{X\sim\bbP,Y\sim\bbQ}[f(X)g(Y)]$. Finally, the covariance operator is $\Sigma = \bbE k_X\otimes k_X = C_{X,X} \in \cL(\cH)$.

The covariance operator is indeed Hilbert-Schmidt because $\|\bbE k_X\otimes k_X\|_{HS}\leq \bbE[\|k_X\otimes k_X\|_{HS}^2] = \bbE[k(X,X)^2] \leq \left(\sup_{x\in\cX}k(x,x)\right)^2 < \infty$ by the general assumption.

\subsection{Part two: concentration in the Hilbert space $\mathcal{L}_{HS}(\mathcal{H})$}
In part one we have shown that $\Sigma$ is a Hilbert-Schmidt operators from $\mathcal{H}$ to $\mathcal{H}$. In this subsection, we show concentration of $\|\Sigma_n - \Sigma\|_{HS}$ by using Lemma \ref{lm:Hilbert-concentration}.

\begin{lemma}
\label{lem:concentration-in-HS-operator-space}
Under the general assumptions, with probability $1-2 e^{-\tau}$, we have
\begin{equation}
    \left\| \frac{1}{n} \sum_{i=1}^{n} k_{X_{i}} \otimes k_{X_{i}} - \mathbb{E} k_{X} \otimes k_{X} \right\|_{HS} \leq C_{23} \frac{\sqrt{\tau}}{\sqrt{n}}
\end{equation}
for some constant $C_{23}$.
\end{lemma}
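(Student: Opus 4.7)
The plan is to apply the Hilbert space concentration bound in Lemma \ref{lm:Hilbert-concentration} directly to the i.i.d.\ random elements $\xi_i := k_{X_i} \otimes k_{X_i} - \mathbb{E}\, k_X \otimes k_X$ viewed as elements of the Hilbert space $\mathcal{L}_{HS}(\mathcal{H})$. Three things need to be checked: (i) $\mathcal{L}_{HS}(\mathcal{H})$ is a separable Hilbert space, (ii) each $\xi_i$ has zero mean, and (iii) the $\xi_i$ admit a deterministic norm bound $\|\xi_i\|_{HS} \le C$ that is independent of $i$ and the sample.

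First I would verify separability: under the general assumptions, $\mathcal{X}$ is separable and $k$ is continuous, so by Lemma 4.33 of \citet{steinwart2008support} the RKHS $\mathcal{H}$ is separable (this was already noted in the text immediately after the statement of the general assumptions). Separability of $\mathcal{H}$ implies separability of $\mathcal{L}_{HS}(\mathcal{H})$, since an orthonormal basis $\{e_j\}_{j\ge 1}$ of $\mathcal{H}$ gives rise to the countable orthonormal basis $\{e_i \otimes e_j\}_{i,j\ge 1}$ of $\mathcal{L}_{HS}(\mathcal{H})$. Zero-meanness of $\xi_i$ is immediate from the definition of $\Sigma$ as the expectation $\mathbb{E}\, k_X \otimes k_X$, which was justified in part one.

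The heart of the proof is the uniform bound. For any $x \in \mathcal{X}$, the rank-one operator $k_x \otimes k_x$ has Hilbert--Schmidt norm equal to $\|k_x\|_\mathcal{H}^2 = \langle k_x, k_x\rangle_\mathcal{H} = k(x,x)$ by the reproducing property. The general assumption \eqref{eq:assumption-KPCA} gives $M := \sup_{x \in \mathcal{X}} k(x,x) < \infty$, so $\|k_{X_i} \otimes k_{X_i}\|_{HS} \le M$ almost surely. By Jensen's inequality, $\|\mathbb{E}\, k_X \otimes k_X\|_{HS} \le \mathbb{E}\|k_X \otimes k_X\|_{HS} \le M$, and hence the triangle inequality yields $\|\xi_i\|_{HS} \le 2M$ deterministically.

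With all three hypotheses in place, Lemma \ref{lm:Hilbert-concentration} applied with $C = 2M$ gives, with probability at least $1 - 2e^{-\tau}$,
\[
\left\|\frac{1}{n}\sum_{i=1}^n \xi_i\right\|_{HS} \le \frac{2M\sqrt{2\tau}}{\sqrt{n}},
\]
which is the stated bound with $C_{23} := 2\sqrt{2}\,\sup_{x\in\mathcal{X}} k(x,x)$. I do not anticipate any real obstacle here; the only subtlety worth flagging is that separability of $\mathcal{L}_{HS}(\mathcal{H})$ is needed to invoke Lemma \ref{lm:Hilbert-concentration}, and this relies on the separability of $\mathcal{H}$ that in turn comes from the separability of $\mathcal{X}$ assumed at the outset.
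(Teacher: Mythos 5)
Your proof is correct and follows essentially the same approach as the paper: identify $\xi_i := k_{X_i}\otimes k_{X_i} - \mathbb{E}\,k_X\otimes k_X$ as bounded zero-mean i.i.d.\ elements of $\mathcal{L}_{HS}(\mathcal{H})$ via $\|k_x\otimes k_x\|_{HS}=k(x,x)$, then invoke Lemma~\ref{lm:Hilbert-concentration}. You make the argument slightly tighter by using Jensen's inequality to turn $\|\mathbb{E}\,k_X\otimes k_X\|_{HS}$ into $\sup_x k(x,x)$, and you helpfully make explicit the separability of $\mathcal{L}_{HS}(\mathcal{H})$ needed to apply the concentration lemma; otherwise the two proofs coincide.
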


\begin{proof}
We denote $M:= \sqrt{\sup _{x \in \mathcal{X}} k(x, x)}<\infty$. For any $i \in[n]$, we have
\[
\begin{aligned}
    \| k_{X_{i}} \otimes k_{X_{i}}-\mathbb{E} k_{X} \otimes k_{X} \|_{HS} &\leq \| k_{X_{i}} \otimes k_{X_{i}} \|_{HS} + \|\mathbb{E} k_{X} \otimes k_{X} \|_{HS} \\
    &= \|k_{X_i}\|_\mathcal{H}^2 + \|\mathbb{E} k_{X} \otimes k_{X} \|_{HS} \\
    &= k(X_i, X_i) + \|\mathbb{E} k_{X} \otimes k_{X} \|_{HS} \\
    &\leq M^2 + \|\mathbb{E} k_{X} \otimes k_{X} \|_{HS},
\end{aligned}
\]
which implies $k_{X_{i}} \otimes k_{X_{i}}-\mathbb{E} k_{X} \otimes k_{X}$'s are bounded zero mean independent random variables in $\mathcal{L}_{HS}(\mathcal{H}, \mathcal{H})$.

We can take $C_{23} = \sqrt{2}(M^2 + \|\mathbb{E} k_{X} \otimes k_{X} \|_{HS})$. Then by Lemma \ref{lm:Hilbert-concentration}, we have
\begin{equation*}
     \left\| \frac{1}{n} \sum_{i=1}^{n} k_{X_{i}} \otimes k_{X_{i}} - \mathbb{E} k_{X} \otimes k_{X} \right\|_{HS} =  \left\| \frac{1}{n} \sum_{i=1}^{n} (k_{X_{i}} \otimes k_{X_{i}} - \mathbb{E} k_{X} \otimes k_{X}) \right\|_{HS} \leq C_{23} \frac{\sqrt{\tau}}{\sqrt{n}}
\end{equation*}
with probability $1-2 e^{-\tau}$.
\end{proof}

\subsection{Part three: checking conditions for general theory}
\begin{lemma}
	Under the general conditions, the following facts hold true:
	\begin{enumerate}
		\item The reproducing kernel Hilbert space $H$ is a subspace of $L^2(\cX, \bbP)$.
		\item The $\cH$ norm $\|\cdot \|_{\cH}$ is stronger than infinity norm.
		\item Both $\Sigma, \Sigma_n$ are Hilbert-Schmidt from $\cH$ to $\cH$.
		\item All eigenvalues of $\Sigma$ (counting multiplicity) can be arranged in a decreasing (possibly infinite) sequence of non-negative real numbers $\lambda_1 \geq \lambda_2 \geq \ldots \geq \lambda_K > \lambda_{K+1} \geq \ldots \geq 0$ with a positive gap between $\lambda_K$ and $\lambda_{K+1}$.
		\item The top $K$ eigenfunctions $\{f_i\}_{i=1}^K \subset \cH$ and can be picked to form an orthonormal set of functions in $L^2(\cX, \bbP)$.
		\item $\Sigma_n$ has a sequence of non-increasing, real, non-negative eigenvalues.
	\end{enumerate}
\label{lm:check-for-gen-theory-2}
\end{lemma}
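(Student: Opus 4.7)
The plan is to verify the six items by exploiting the fact that $\cH$ is an RKHS with bounded kernel, so most properties follow from standard RKHS/spectral theory. I will address the items in an order that lets later ones reuse earlier ones.

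For items 1 and 2, I would start from the reproducing property. Setting $M = \sqrt{\sup_{x\in\cX} k(x,x)} < \infty$ from the general assumption, the reproducing property gives $|f(x)| = |\langle f, k_x\rangle_\cH| \leq M \|f\|_\cH$ for every $f\in\cH$ and $x\in\cX$. This immediately yields item 2 (with constant $C_\cH = M$), and since a bounded function on $\cX$ is square-integrable under any probability measure, $\cH \subset L^2(\cX,\bbP)$, giving item 1. Item 3 was already established in part one for $\Sigma$; the identical argument applied to the empirical measure gives $\|\Sigma_n\|_{HS} \le M^2$ almost surely, yielding $\Sigma_n \in \cL_{HS}(\cH)$.

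For item 4, I would check that $\Sigma$ is compact, self-adjoint, and positive on $\cH$. Compactness follows from item 3 (Hilbert–Schmidt operators are compact). Self-adjointness follows because each rank-one $k_X \otimes k_X$ is self-adjoint and the adjoint commutes with the Bochner expectation. Positivity follows from the identity
\[
\langle \Sigma f, f\rangle_\cH = \bbE\bigl[\langle k_X, f\rangle_\cH \overline{\langle k_X, f\rangle_\cH}\bigr] = \bbE |f(X)|^2 \ge 0.
\]
The spectral theorem for such operators (as summarized in Section~\ref{sec:spec-theory}) then produces the non-increasing sequence of non-negative real eigenvalues; the strict gap $\lambda_K > \lambda_{K+1}$ is part of the general assumptions.

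For item 5, the eigenfunctions $f_1,\dots,f_K$ live in $\cH$ because they belong to the range of $\Sigma$, which is a subset of $\cH$; one can also invoke the general assumption that $\{f_i\}_{i=1}^K \subset C_b(\cX) \subset \cH$ (noting $C_b(\cX)\subset \cH$ is not automatic, but the conclusion $f_i\in\cH$ is). The key computation for $L^2(\cX,\bbP)$-orthonormality uses
\[
\langle f_i, f_j\rangle_{L^2(\cX,\bbP)} = \bbE[f_i(X)\overline{f_j(X)}] = \bigl\langle f_i,\, \bbE[k_X\otimes k_X]\, f_j\bigr\rangle_\cH = \lambda_j \langle f_i, f_j\rangle_\cH.
\]
Picking an $\cH$-orthonormal basis inside each eigenspace associated with a distinct $\lambda_j>0$ and rescaling by $1/\sqrt{\lambda_j}$ produces an $L^2$-orthonormal system spanning the top-$K$ eigenspace; reality can be arranged because $\Sigma$ is a real operator on real-valued inputs (its integral kernel is real symmetric), so real and imaginary parts of complex eigenfunctions are themselves eigenfunctions. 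Item 6 follows by the same structural observation as item 4 applied to $\Sigma_n$: it is compact, self-adjoint, and positive (with $\langle \Sigma_n f, f\rangle_\cH = \tfrac1n\sum_i |f(X_i)|^2 \ge 0$), so its spectrum consists of a non-increasing sequence of non-negative reals.

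The main subtlety I anticipate is the reality/orthonormalization step in item 5, namely ensuring the chosen eigenfunctions are simultaneously real-valued and $L^2(\cX,\bbP)$-orthonormal while still spanning the correct leading eigenspace of $\Sigma$; everything else is a straightforward unwinding of the RKHS bound $|f(x)|\le M\|f\|_\cH$ and standard spectral theory for positive compact self-adjoint operators.
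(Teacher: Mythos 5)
Your proof is correct and reaches every conclusion of the lemma, but it takes a noticeably different route from the paper in items 1, 4, 5 and 6. The paper dispatches items 1, 4, and 5 with a single appeal to Mercer's theorem (the kernel is a Mercer kernel because $\sup_x k(x,x)<\infty$), and handles item 6 by transferring spectral information from the finite kernel matrix $K_n$ via the spectral-equivalence Lemma for KPCA. You instead verify everything at the operator level inside $\cH$: item 1 from the pointwise bound $|f(x)|\le M\|f\|_\cH$ (a bounded function is trivially square-integrable under any probability measure); item 4 by checking compactness, self-adjointness and positivity of $\Sigma$ directly and invoking the spectral theorem; item 5 via the identity $\ip{f_i,f_j}_{L^2(\cX,\bbP)}=\ip{f_i,\Sigma f_j}_\cH=\lambda_j\ip{f_i,f_j}_\cH$, followed by rescaling an $\cH$-orthonormal basis within each eigenspace; and item 6 by repeating the compact/self-adjoint/positive argument for $\Sigma_n$. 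What your route buys is self-containedness and scope: it never leaves the RKHS picture, avoids the Mercer-theorem machinery (and the implicit identification between the $L^2$ integral operator and the $\cH$ covariance operator that Mercer's theorem supplies), and the cross-eigenspace orthogonality in your item 5 argument is cleanly forced by $\cH$-self-adjointness of $\Sigma$, so the rescaled family is genuinely $L^2$-orthonormal, not just orthogonal within each block. What the paper's route buys is brevity: Mercer gives the $L^2$-orthonormal real eigenfunctions and the non-negative spectrum in one stroke, and Lemma 5.1 makes item 6 immediate from the eigenvalue decomposition of the psd matrix $K_n$. Both are valid; yours is the more elementary and arguably more transparent derivation of the same facts.
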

\begin{proof}
Recall that the general conditions entail that $M:= \sqrt{\sup _{x \in \mathcal{X}} k(x, x)}<\infty$. The kernel $k(\cdot,\cdot)$ is therefore a Mercer’s kernel, which satisfies the condition $\int_{\cX\times\cX} k^2(x,y) d\bbP(x) d\bbP(y) < \infty$. Then item 1 is an implication of Mercer’s theorem.

For item 2, for any $f\in\cH$, by the reproducing property and Cauchy-Schwarz inequality, we have $|f(x)|^2 = \langle f, k(\cdot, x) \rangle_{\cH}^2 \leq |k(x,x)|^2  \|f\|_{\cH}^2 \leq M^2 \|f\|_{\cH}^2$. Therefore $\|f\|_{\infty} \leq M\|f\|_{\cH}$.

Item 3 has been checked in the intermediate step of proof of Lemma \ref{lem:concentration-in-HS-operator-space}.

Both item 4 and item 5 are ensured by Mercer's theorem.

Item 6 is true because of the relationship between the spectrum of $\Sigma_n$ and that of the symmetric positive semi-definite kernel matrix $K_n$, which has been checked in Lemma \ref{lem:5.1}.
\end{proof}

\subsection{Part four: putting all ingredients together} To complete the proof of Theorem \ref{thm:main-thm-2}, we have to deal with the error induced by operator $\hV_1$ only having orthonormal columns in $L^{2}(\mathcal{X}, \mathbb{P}_{n})$ but not in $L^{2}(\mathcal{X}, \mathbb{P})$.
This can be accomplished using the same trick as part five of the proof of Theorem \ref{thm:main-thm}. So we omit the technical redundance here.

\end{document}